\documentclass[11pt, a4paper]{amsart}
%
\usepackage[hmargin=3cm, vmargin=2.5cm, includefoot, twoside]{geometry}
\usepackage[backref=page,
bookmarksopen=true]{hyperref}
\usepackage[latin1]{inputenc}  
\usepackage[T1]{fontenc}       
\usepackage[english]{babel}
\usepackage{amsmath}
\usepackage{amssymb}
\usepackage{amsthm}
\usepackage{latexsym}
\usepackage{mathrsfs}
\usepackage{graphics, graphicx}
\newtheorem{prop}{Proposition}[section]
\newtheorem{thm}[prop]{Theorem}
\newtheorem{conj}[prop]{Conjecture}

\newtheorem*{addendum*}{Addendum}
\newtheorem{cor}[prop]{Corollary}
\newtheorem{lem}[prop]{Lemma}

\newtheorem*{convention*}{Convention}
\theoremstyle{definition}
\newtheorem*{defn*}{Definition}
\newtheorem{defn}[prop]{Definition}
\newtheorem*{remark*}{Remark}
\newtheorem*{remarks*}{Remarks}
\newtheorem{remark}[prop]{Remark}
\theoremstyle{remark}

\newtheorem*{example*}{Example}
\numberwithin{equation}{section}

\newcommand{\vareps}{\varepsilon}

\newcommand{\NN}{\mathbf{N}}
\newcommand{\RR}{\mathbf{R}}
\newcommand{\ZZ}{\mathbf{Z}}

\newcommand{\QH}{\widetilde{\mathrm{QH}}}

\newcommand{\la}{\langle}
\newcommand{\ra}{\rangle}
\newcommand{\inv}{^{-1}}

\newcommand{\minus}{\setminus}%

\newcommand{\centra}{\mathscr{Z}}

\def\bs#1.{
              \def\temp{#1}
              \ifx\temp\empty
                   \mathcal{B}
              \else
                   \mathcal{B}(#1)
              \fi
}
\newcommand{\cat}{{\rm CAT(0)}\ }  
\newcommand{\tangle}[2]
{\angle_\mathrm{T}(#1,#2)}
\newcommand{\aangle}[3]
{\angle_{#1}(#2,#3)}
\newcommand{\cangle}[3]
{\overline{\angle}_{#1}(#2,#3)}
\DeclareMathOperator{\proj}{proj} 
\DeclareMathOperator{\Stab}{Stab}  \DeclareMathOperator{\Res}{Res}
 
\DeclareMathOperator{\Hom}{Hom} 
\DeclareMathOperator{\Pc}{Pc}
\DeclareMathOperator{\Isom}{Is}

\newcommand{\bd}{\partial} %

\def\Aut{\mathop{\mathrm{Aut}}\nolimits}

\def\min{\mathop{\mathrm{min}}\nolimits}

\def\cl{\mathop{\mathrm{cl}}\nolimits}
\def\scl{\mathop{\mathrm{scl}}\nolimits}
\begin{document}
\title{Rank one isometries of buildings and quasi-morphisms of Kac--Moody groups}
\author{Pierre-Emmanuel Caprace*}
\address{Université catholique de Louvain, Chemin du Cyclotron 2, 1348 Louvain-la-Neuve, Belgium}
\email{pe.caprace@uclouvain.be}
\thanks{*F.N.R.S. research associate}
\author{Koji Fujiwara$^\ddagger$}
\address{Graduate School of Information Science, Tohoku University,
Sendai, 980-8579, Japan}
\email{fujiwara@math.is.tohoku.ac.jp }
\thanks{$^\ddagger$Supported in part by
Grant-in-Aid for Scientific Research (No. 19340013).
}
\date{31 August 2008}
\keywords{}
\begin{abstract}
Given an irreducible non-spherical non-affine (possibly non-proper) building $X$, we give sufficient conditions
for a group $G < \Aut(X)$ to admit an infinite-dimensional space of non-trivial quasi-morphisms. The result
applies in particular to all irreducible (non-spherical and non-affine) Kac--Moody groups over integral domains.
In particular, we obtain finitely presented simple groups of infinite commutator width, thereby answering a
question of Valerii G. Bardakov \cite[Problem~14.13]{Kourovka}. Independently of these considerations, we also
include a discussion of rank one isometries of \emph{proper} \cat spaces from a rigidity viewpoint. In an
appendix, we show that any homogeneous quasi-morphism of a locally compact group with integer values is
continuous.
\end{abstract}
\maketitle
\section{Introduction}

Let $G$ be a group. Recall that a \textbf{quasi-morphism} is a map $f: G \to \RR$ such that
$$\sup_{g, h \in G} |f(gh) -f(g) - f(h) | < \infty.$$
A quasi-morphism is called \textbf{homogeneous} if its restriction to every cyclic subgroup is a homomorphism.
The set $\mathrm{QH}(G)$ of all quasi-morphisms is naturally endowed with the structure of a real vector space.
We denote by $\QH(G)$ the vector space of \textbf{non-trivial quasi-morphisms}, namely
$$
\QH(G) = \mathrm{QH}(G)/\big(\ell^\infty(G) \oplus \Hom(G, \RR)\big).
$$
The space $\QH(G)$ naturally identifies to the kernel of the canonical map
$$H^2_{\mathrm{b}}(G, \RR) \to H^2(G, \RR)$$
of the second bounded cohomology space with trivial coefficients to ordinary second cohomology. Groups $G$ with
vanishing $\QH(G)$ include all amenable groups and all irreducible lattices in higher rank semisimple algebraic
groups over local fields \cite{BurgerMonod}. Opposite to these are groups with infinite-dimensional space of
quasi-morphisms; they include non-elementary hyperbolic groups \cite{EpsteinFujiwara}, mapping class groups of
surfaces of higher genus \cite{BestvinaFujiwara:MCG} and outer automorphism groups of free groups
\cite{Hamenstadt}, \cite{BestvinaFeighn}. There exist groups $G$ which have Kazhdan's property (T)
such that $\QH(G)$ is finite-dimensional but non-zero
\cite{MonodRemy}.

\medskip Before stating the main result of this paper, let us recall that a building of type $(W,S)$ is a set $X$
endowed with a map $\delta : X \times X \to W$ satisfying three simple axioms which are recalled in
Sect.~\ref{sect:buildings} below. The map $\delta$ is called the \textbf{Weyl distance}. A group $\Gamma$ acting
on $X$ by automorphisms is said to be \textbf{Weyl-transitive} if for all $x, y, x', y' \in X$ with $\delta(x,
y) = \delta(x', y')$ there exists $\gamma \in \Gamma$ such that $\gamma.x = x'$ and $\gamma.y = y'$.

\begin{thm}\label{thm:main}
Let $(W,S)$ be an irreducible non-spherical and non-affine Coxeter system with $S$ finite, $X$ be a building of
type $(W, S)$ and $G$ be a group acting on $X$ by automorphisms. Assume that at least one of the following
conditions is satisfied:
\begin{enumerate}
\item The $G$-action on $X$ is Weyl-transitive.

\item For some apartment $A \subset X$, the stabilizer $\Stab_G(A)$ acts cocompactly on $A$.
\end{enumerate}
Then $\QH(G)$ is infinite-dimensional.
\end{thm}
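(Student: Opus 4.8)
The key idea is to use the standard machinery of Bestvina–Fujiwara / Epstein–Fujiwara: to produce an infinite-dimensional space of quasi-morphisms on a group acting on a space, it suffices to find a "rank one" element — an isometry (or in the building case, an automorphism) whose axis behaves like a geodesic in a hyperbolic space, in the sense that it satisfies a WPD-type (weak proper discontinuity) or "independence" condition with respect to many of its conjugates. The plan is to realize the building $X$ via the Davis complex (or the CAT(0) realization) as a CAT(0) space, find such a rank one element $g \in G$, and then feed it into the general construction.

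So the first step is: since $(W,S)$ is irreducible, non-spherical and non-affine, the Davis complex $\Sigma$ of $(W,S)$ contains a rank one geodesic — this is a known fact about Coxeter groups (Davis complexes of irreducible non-affine non-spherical Coxeter groups are "rank one" CAT(0) spaces, with a rank one axis realized by a suitable infinite-order element $w \in W$ whose axis does not bound a flat half-plane). One should choose $w$ to be a specific word in $S$ whose combinatorial axis in the Cayley graph of $W$ is a rank one geodesic. The building $X$ retracts onto apartments, each apartment is isometric to $\Sigma$, and under either hypothesis (1) or (2) the group $G$ contains (or acts with enough transitivity to produce) an element $g$ acting on some apartment $A$ as this rank one isometry $w$; the rank one property is inherited because $A$ is convex in $X$ (apartments are isometrically and convexly embedded in the CAT(0) realization).

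The second step — and this is where the two hypotheses diverge — is to check the properness/WPD-type condition needed for the Bestvina–Fujiwara criterion. Under hypothesis (2), $\Stab_G(A)$ acts cocompactly on $A$, so standard CAT(0) arguments (flat strip theorem, convexity of the metric) give that an element acting as a rank one isometry on $A$ satisfies the weak proper discontinuity needed. Under hypothesis (1) (Weyl-transitivity), one argues instead combinatorially: Weyl-transitivity gives enough homogeneity so that the set of conjugates of $g$ whose axes fellow-travel $g$'s axis for a long time is controlled, again verifying the hypothesis of the general quasi-morphism construction. In both cases, the output is that the axis of $g$ is a rank one, "non-degenerate" geodesic on which $G$ acts in a WPD fashion, so the Epstein–Fujiwara / Bestvina–Fujiwara machine produces an infinite-dimensional space of homogeneous quasi-morphisms that are non-trivial (unbounded, not homomorphisms).

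The main obstacle I expect is the verification of the rank one / WPD condition under hypothesis (1), since Weyl-transitivity is a purely combinatorial condition with a priori no metric properness at all — $G$ need not act properly on $X$, and $X$ need not be locally finite. The trick will be to work with the combinatorial structure: use the fact that the rank one axis of $w$ in the apartment, viewed as a combinatorial geodesic (gallery), has the property that it cannot be "too parallel" to too many of its translates, and exploit Weyl-transitivity to transfer a counting statement about $W$ (the Coxeter group itself is the model case, as in Caprace–Fujiwara's analysis of rank one isometries of Coxeter complexes) to the building $X$ and then to $G$. The construction of the explicit rank one element $w \in W$ and the proof that its axis is rank one — using the classification of "reflection subgroups" or the absence of a flat half-plane bounding the axis — is the technical heart; once that is in hand, the quasi-morphism production is a black box.
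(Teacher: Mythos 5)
Your overall architecture (find rank one elements of $W$ acting on the Davis complex, push them into $G$ via apartments, feed them into the Bestvina--Fujiwara machine) matches the paper, but there are two concrete gaps. First, WPD is the wrong hypothesis to aim for here and would in general fail: under hypothesis (1) the group $G$ is typically very far from discrete (e.g.\ a Kac--Moody group over an infinite field), and even under hypothesis (2) nothing forces $G$ to act properly on $X$ --- the pointwise stabilizer of two chambers far apart along the axis of a contracting element can be an infinite group, so the finiteness demanded by weak proper discontinuity is simply unavailable. The version of the criterion that applies (\cite[Th.~6.3]{BestFuji_symsp}, quoted as Proposition~\ref{prop:infiniteHQH}) has no discreteness hypothesis at all; its price is that one must exhibit \emph{two} contracting elements $g_1, g_2$ that are independent and not $\Gamma$-equivalent. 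Your sketch produces one rank one element and never addresses where the second, inequivalent one comes from, nor how inequivalence in $\Aut(X)$ is certified. The paper does this by first producing $\gamma_1, \gamma_2 \in W$ with $\gamma_1 \not\sim_W \gamma_2$ and $\gamma_1 \not\sim_W \gamma_2\inv$ (Corollary~\ref{cor:existence}, which uses properness of the $W$-action on $\Sigma$), and then proving a transfer lemma (Lemma~\ref{lem:bdg:irreversible}): if $g_1, g_2$ are $\Aut(X)$-equivalent in the building, then their retractions $\rho_i \circ g_i|_{A_i}$ onto apartments are $W$-equivalent. This reduction to the Coxeter group is the actual replacement for the ``counting/fellow-travelling'' step you leave open, and it is where Weyl-transitivity (resp.\ cocompactness of $\Stab_G(A)$) enters.

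Second, your claim that the rank one property is ``inherited because apartments are convexly embedded'' runs in the wrong direction: convexity shows that a flat half-plane in $A$ is one in $X$, i.e.\ rank one in $X$ implies rank one in $A$. The direction you need --- that an element acting as a rank one isometry of an apartment is rank one, indeed contracting, on the whole building, which may contain flat half-planes not visible in $A$ --- is the content of Theorem~\ref{thm:CharRankOne}; its proof uses the retraction $\rho_{A,c}$ and the structure of flat half-spaces in $\Sigma$ (Proposition~\ref{prop:FlatHalfSpace}) to show that a flat half-plane along the axis would force $\gamma$ to stabilize a residue of reducible or affine type. Without this step the transfer of the rank one property from $W$ to $X$ does not go through.
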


\begin{remark}\label{rem:intro}\
\renewcommand{\theenumi}{\alph{enumi}}
\begin{enumerate}
\item Notice that the building $X$ is not assumed to be locally compact. Moreover $X$ may contain flats of
arbitrarily large dimension; in particular it need not be Gromov hyperbolic.

\item The quasi-morphisms appearing in Theorem~\ref{thm:main} take values in $\ZZ$ and extend to quasi-morphisms
$\Aut(X) \to \ZZ$ defined over the full automorphism group of $X$. It is an amusing fact that \emph{any
homogeneous quasi-morphisms of a locally compact group with values in $\ZZ$ is continuous}; this will be shown
in the appendix below. In the special case when $X$ is locally compact, the bounded-open topology gives
$\Aut(X)$ the structure of a locally compact (second countable) group. In particular, under the assumptions of
Theorem~\ref{thm:main}, we deduce that the space of \emph{continuous} non-trivial quasi-morphisms on $\Aut(X)$
is infinite-dimensional.

\item Condition (1) in Theorem~\ref{thm:main} implies in particular that $G$ is far from discrete. We point out
that, in the special case when $X$ is Gromov hyperbolic,
a related transitivity assumption (which is not logically correlated to Weyl transitivity, but which might look qualitatively stronger at a first sight)
would automatically imply vanishing of $\QH(G)$. Indeed, according to an unpublished result by N.~Monod, independently
established by Ursula Hamenst\"adt \cite[Th.~4.1]{Hamenstadt:CrossRatio}, if a group $\Gamma$ admits a
quasi-distance-transitive action on a Gromov-hyperbolic geodesic metric space, then $\QH(\Gamma) = 0$ (for a
related result see Corollary~\ref{cor:Hamenstaedt:Prop6.4} below). By a \textbf{quasi-distance-transitive}
action, we mean that there exists $C
>0$ such that for all $x, y, x', y' \in X$ with $d(x, y) = d(x', y')$, there exists $\gamma \in \Gamma$ such
that $d(\gamma.x, x') \leq C$ and $d(\gamma.y, y') \leq C$.
\end{enumerate}

\end{remark}

\medskip
The most important class of groups admitting Weyl-transitive actions on buildings of arbitrary type is provided
by Kac--Moody groups. These groups are obtained by a functorial construction which associates a group functor on
the category of commutative rings to any generalized Cartan matrix (or more generally to any Kac--Moody root
datum), see \cite{TitsJA} and \cite{TitsLMS} for the split case and \cite{RemAst} for the almost split case.

\begin{cor}\label{cor:KM:CommutatorWidth}
Let $\mathcal{G}$ be a Kac--Moody-Tits functor whose Weyl group is irreducible non-spherical and non-affine and
let $R$ be an integral domain. Then $\QH(\mathcal{G}(R))$ is infinite-dimensional. In particular
$\mathcal{G}(R)$ possesses elements of strictly positive stable commutator length and is therefore of infinite
commutator width.
\end{cor}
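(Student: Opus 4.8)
The plan is to obtain the Corollary as a direct application of Theorem~\ref{thm:main}, the point being to manufacture a suitable action of $\mathcal{G}(R)$ on a building and to verify one of the hypotheses (1) or~(2). Let $(W,S)$ denote the Weyl group of the Kac--Moody root datum underlying $\mathcal{G}$; by assumption this Coxeter system is irreducible, non-spherical and non-affine, and $S$ is finite since a Kac--Moody root datum has only finitely many simple roots. Let $K = \mathrm{Frac}(R)$ be the fraction field of the integral domain $R$. By Tits' theory, $\mathcal{G}(K)$ is endowed with a (twin) BN-pair of type $(W,S)$; let $X = X_+$ be (the positive half of) the associated twin building, a thick building of type $(W,S)$ on which $\mathcal{G}(K)$ acts strongly transitively by type-preserving automorphisms. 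Functoriality of $\mathcal{G}$ yields a homomorphism $\mathcal{G}(R) \to \mathcal{G}(K)$, so that $G = \mathcal{G}(R)$ acts on $X$ by automorphisms. Note that $X$ is not assumed proper, which is exactly the setting Theorem~\ref{thm:main} was designed for.

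It then remains to check hypothesis~(2) of Theorem~\ref{thm:main}, and here is where the only real work lies. Let $A \subset X$ be the fundamental apartment, canonically identified with the Coxeter complex $\Sigma(W,S)$; its setwise stabiliser in $\mathcal{G}(K)$ is $N(K)$, and the induced action of $N(K)$ on $A \cong \Sigma(W,S)$ is through the quotient homomorphism $N(K) \twoheadrightarrow W$ (which acts simply transitively on chambers of $A$). For each $s \in S$, the associated rank-one homomorphism $\varphi_s \colon \SL_2 \to \mathcal{G}$ provides the standard lift $\tilde s = \varphi_s\bigl(\bigl(\begin{smallmatrix}0 & 1 \\ -1 & 0\end{smallmatrix}\bigr)\bigr)$ of $s$; since $\bigl(\begin{smallmatrix}0 & 1 \\ -1 & 0\end{smallmatrix}\bigr)$ lies in $\SL_2$ of the prime ring of $R$, we get $\tilde s \in \mathcal{G}(R)$. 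Hence $\langle \tilde s : s \in S\rangle \le N(R) \le \Stab_G(A)$, and this subgroup already maps onto $W$, so $\Stab_G(A)$ acts transitively on the chambers of $A$ and, in particular, cocompactly on $A$. (Alternatively, if $R$ happens to be a field one can invoke hypothesis~(1) instead, since $\mathcal{G}(R)$ then acts strongly transitively, hence Weyl-transitively, on $X$; but the apartment-stabiliser route above works uniformly for all integral domains.) Theorem~\ref{thm:main} now gives that $\QH(\mathcal{G}(R))$ is infinite-dimensional.

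For the last assertion, recall Bavard's duality theorem: for $g$ in the commutator subgroup $[G,G]$ of a group $G$, the stable commutator length satisfies $\scl(g) = \sup_\phi |\phi(g)|/(2\,D(\phi))$, the supremum being over homogeneous quasi-morphisms $\phi$ with positive defect $D(\phi)$. Since $\QH(G) \neq 0$, there is a homogeneous quasi-morphism $\phi$ on $G = \mathcal{G}(R)$ that is not a homomorphism; a standard argument using homogeneity shows that $\phi$ cannot vanish identically on $[G,G]$, so $\phi(g) \neq 0$ for some $g \in [G,G]$, whence $\scl(g) > 0$. Finally, $\scl(g^n) = n\,\scl(g)$ together with subadditivity of commutator length forces the number of commutators needed to write $g^n$ to grow linearly in $n$; thus $[\mathcal{G}(R), \mathcal{G}(R)]$ has infinite commutator width (equivalently $\mathcal{G}(R)$ itself, when it is perfect). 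The main obstacle in the whole argument is the verification of hypothesis~(2) over a domain that need not be a field: one has to know that sufficiently many elements realising the Weyl group on a common apartment already live in $\mathcal{G}(R)$, which is precisely what the integrality of the lifts $\tilde s$ secures.
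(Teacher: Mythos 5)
Your proposal is correct and follows essentially the same route as the paper: pass to the building of $\mathcal{G}(K)$ for a field $K \supseteq R$, observe that $\mathcal{G}(R)$ already contains the standard lifts of the Weyl group and hence that the apartment stabiliser acts cocompactly (hypothesis~(2) of Theorem~\ref{thm:main}, with Weyl-transitivity as the alternative when $R$ is a field), and then conclude via Bavard duality. The paper's proof is just a more compressed version of the same argument.
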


Recall that the \textbf{stable commutator length} of an element $g$,
denoted by $\scl(g)$,
 of the commutator subgroup $[G, G]$ of a
group $G$ is defined as $\lim_{n \to \infty} \frac{\cl(g^n)}{n}$, where $\cl(h)$
denotes the minimal number $k$
such that $h \in [G, G]$ may be written as a product of $k$ commutators. 
The \textbf{commutator width} of $G$ is the supremum of the function $\cl$ on $[G,G]$.
The connection between the second
bounded cohomology of $G$, quasi-morphisms on $G$ and the stable commutator length of elements in $[G, G]$ was discovered by
Ch.~Bavard~\cite{Bavard}. 
In particular, if $f(g)>0$ for $f \in \QH(G)$ and $g \in [G,G]$,
then $\scl(g)>0$.
We refer to a recent monograph by D. Calegari \cite{Cal} for more information on the stable commutator length.

\medskip
A Kac--Moody group over a field of cardinality~$\geq 4$ is perfect. However even over the smallest fields, the
abelianization is always finite (this follows from the last two paragraphs of the proof of Theorem~15 in
\cite{CR06}). The arguments of \emph{loc.~cit.} show that the group $\mathcal{G}(R)$ as in
Corollary~\ref{cor:KM:CommutatorWidth} generally admits no nontrivial finite quotient. In fact, when $R$ is a
finite field of order larger than the rank of the Weyl group, the group $\mathcal{G}(R)$ happens to be simple,
and even finitely presented when the Weyl group is $2$-spherical \cite{CR06}. In particular, we obtain the
following, which answers positively a question asked by Valerii~G.~Bardakov \cite[Problem~14.13]{Kourovka}:

\begin{cor}\label{cor:KM:simple}
There exists an infinite family of pairwise non-isomorphic finitely presented simple groups possessing elements
of strictly positive stable commutator length; these groups have therefore infinite commutator width.
\end{cor}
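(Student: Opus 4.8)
The plan is to feed Kac--Moody groups over finite fields into Corollary~\ref{cor:KM:CommutatorWidth} and to combine the output with the structural results of \cite{CR06} quoted just after that corollary. First I would fix, once and for all, a Kac--Moody-Tits functor $\mathcal{G}$ whose Weyl group $(W,S)$ is irreducible, non-spherical, non-affine and $2$-spherical; concretely one may take $|S| = 3$ with Coxeter diagram the hyperbolic triangle group carrying all labels $4$, which arises from a generalized Cartan matrix since $4\in\{2,3,4,6\}$ (e.g.\ the matrix with diagonal $2$ and, for each pair $i\neq j$, $a_{ij}=-1$, $a_{ji}=-2$). For every prime power $q$ with $q>|S|$ (here $q\geq 4$) set $G_q:=\mathcal{G}(\FF_q)$. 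By the facts recalled after Corollary~\ref{cor:KM:CommutatorWidth}, namely \cite{CR06}, each $G_q$ is an infinite, finitely presented simple group.

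Next I would check that each $G_q$ contains an element of strictly positive stable commutator length. Being non-abelian simple, $G_q$ is perfect, so $[G_q,G_q]=G_q$ and $\Hom(G_q,\RR)=0$; hence every class of $\QH(G_q)$ is represented by a homogeneous quasi-morphism, and such a representative is bounded only if it vanishes identically. By Corollary~\ref{cor:KM:CommutatorWidth} the space $\QH(G_q)$ is infinite-dimensional, in particular non-zero, so we may pick a homogeneous quasi-morphism $f$ on $G_q$ representing a non-zero class; then $f$ is unbounded, hence $f(g)\neq 0$ for some $g\in G_q$, and replacing $g$ by $g\inv$ if necessary we may assume $f(g)>0$. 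Since $g\in[G_q,G_q]$, Bavard's inequality recalled in the introduction yields $\scl(g)>0$. As $\cl$ is subadditive along powers one gets $\cl(g^n)\geq n\cdot\scl(g)\to\infty$, so the commutator width of $G_q$, i.e.\ the supremum of $\cl$ on $[G_q,G_q]$, is infinite.

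It remains to extract an infinite family of pairwise non-isomorphic groups among the $G_q$. For distinct prime powers $q\neq q'$ one expects $G_q\not\cong G_{q'}$: the building on which $G_q$ acts (strongly transitively, and faithfully since $G_q$ is simple and the action is non-trivial) should be recoverable up to isomorphism from the abstract group $G_q$, and in particular so should the common cardinality $q+1$ of its panels, hence $q$ itself; this is part of the isomorphism theory of Kac--Moody groups. (Alternatively, one may let the diagram range over infinitely many $2$-spherical crystallographic irreducible non-spherical non-affine types of unbounded rank — for instance $n$-cycles carrying a single label $4$ and all other labels $3$, whose Weyl groups are pairwise non-isomorphic as their virtual cohomological dimensions grow with $n$ — taking for $q$ any prime power exceeding the rank in each case, and invoking the same reconstruction principle.) Granting this, $\{G_q\}$ is the desired family. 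I expect this last reconstruction/non-isomorphism step to be the only genuinely non-formal ingredient: the production of positive stable commutator length and infinite commutator width is a direct assembly of Corollary~\ref{cor:KM:CommutatorWidth}, Bavard duality, and \cite{CR06}, whereas pinning down the isomorphism type of $\mathcal{G}(\FF_q)$ — equivalently, recovering the field from the group — is where the work lies.
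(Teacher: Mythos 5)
Your proposal is correct and follows essentially the same route as the paper: combine Corollary~\ref{cor:KM:CommutatorWidth} with the simplicity and finite presentability results of \cite{CR06}, and distinguish the groups over different finite fields. The one step you flag as "where the work lies" --- recovering the field (or type) from the abstract group --- is exactly what the paper disposes of by citing \cite[Cor.~B]{CM06}, which states that Kac--Moody groups of different types over non-isomorphic finite fields are non-isomorphic.
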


Formerly finitely generated simple groups of infinite commutator width had been constructed by Alexey
Muranov~\cite{Muranov}.

\smallskip
As discussed in \cite{CR06}, properties of Kac--Moody groups over finite fields may be fruitfully compared to
properties of higher rank arithmetic groups; this comparison highlights strong analogies between both families.
Corollary~\ref{cor:KM:CommutatorWidth} testifies for the fact that Kac--Moody groups also enjoy some form of
hyperbolicity property, as opposed to the higher rank lattices. In order to stress this in a slightly different
way, we include the following corollary, which follows immediately from Corollary~\ref{cor:KM:CommutatorWidth}:

\begin{cor}\label{cor:BddGeneration}
Kac--Moody groups as in Corollary~\ref{cor:KM:CommutatorWidth} do not have bounded generation. Moreover they are
not boundedly generated by any family of torsion amenable subgroups.\qed
\end{cor}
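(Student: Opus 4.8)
The plan is to deduce both assertions directly from the infinite-dimensionality of $\QH(\mathcal{G}(R))$ established in Corollary~\ref{cor:KM:CommutatorWidth}, using only two elementary facts about a homogeneous quasi-morphism $f$ on a group $G$: first, if $f$ is bounded then $f\equiv 0$, since $f(g)=f(g^n)/n$ for all $n\geq 1$; second, $f$ vanishes on every torsion element, since $m\,f(g)=f(g^m)=f(1)=0$ whenever $g^m=1$. I will also use that, via homogenization, $\QH(G)$ is isomorphic to the quotient of the space $\mathrm{Q}(G)$ of homogeneous quasi-morphisms by $\Hom(G,\RR)$; in particular, if $\QH(G)$ is infinite-dimensional then so is $\mathrm{Q}(G)$. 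Finally recall the telescoping estimate $\bigl|f(a_1\cdots a_m)-\sum_{j=1}^m f(a_j)\bigr|\leq (m-1)D(f)$, where $D(f)$ denotes the defect of $f$.

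For the first assertion I would argue by contradiction: suppose $G:=\mathcal{G}(R)$ is boundedly generated, so that $G=C_1C_2\cdots C_m$ for some cyclic subgroups $C_j=\langle g_j\rangle$. Since a homogeneous quasi-morphism restricts to a homomorphism on each cyclic subgroup, one has a restriction map $\rho\colon \mathrm{Q}(G)\to\bigoplus_{j=1}^m\Hom(C_j,\RR)$. If $f\in\ker\rho$ then $f(g_j)=0$ for every $j$, so the telescoping estimate gives $|f(h)|\leq (m-1)D(f)$ for all $h=g_1^{k_1}\cdots g_m^{k_m}\in G$; hence $f$ is bounded, and therefore $f=0$. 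Thus $\rho$ is injective and $\dim\mathrm{Q}(G)\leq m<\infty$, contradicting the infinite-dimensionality of $\QH(G)$.

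For the second assertion I would run the same argument, replacing the use of cyclic subgroups by the vanishing of homogeneous quasi-morphisms on torsion elements. Suppose $G=H_1H_2\cdots H_m$ where each $H_j$ is a torsion (amenable) subgroup taken from a finite family. By the second fact above, every $f\in\mathrm{Q}(G)$ vanishes identically on each $H_j$; so the telescoping estimate again gives $|f(h)|\leq(m-1)D(f)$ for all $h=h_1\cdots h_m$ with $h_j\in H_j$, whence $f$ is bounded and $f=0$. Therefore $\mathrm{Q}(G)=0$, i.e.\ $\QH(G)=0$, contradicting Corollary~\ref{cor:KM:CommutatorWidth}; note that only the torsion hypothesis, not amenability, is used here. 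There is no genuine obstacle in either step: the sole point deserving care is the injectivity of $\rho$, that is, the standard principle that a group admitting bounded generation by finitely many subgroups, each of which carries only finitely many linearly independent homogeneous quasi-morphisms, has a finite-dimensional space of homogeneous quasi-morphisms.
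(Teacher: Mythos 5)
Your proof is correct, and it fills in exactly the standard argument the paper leaves implicit (the corollary is stated with a \qed{} and the remark that it ``follows immediately'' from Corollary~\ref{cor:KM:CommutatorWidth}): homogenize, note that homogeneous quasi-morphisms restrict to homomorphisms on cyclic subgroups and vanish on torsion elements, and use the telescoping defect bound to show bounded generation would force every homogeneous quasi-morphism into a finite-dimensional space (indeed the zero space in the torsion case). Your observation that amenability of the subgroups is not actually needed in the second assertion is also accurate.
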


The proof of Theorem~\ref{thm:main} relies on a construction of quasi-morphisms for groups acting on \cat
spaces, elaborated by M.~Bestvina and K.~Fujiwara \cite{BestFuji_symsp}. The conditions ensuring an
infinite-dimensional space of quasi-morphisms are recalled in Sect.~\ref{sec:prel}; the main one is the
existence of \emph{contracting isometries}, which by definition are isometries inducing a North-South dynamics
on the boundary and generalize the \emph{rank one isometries} as defined by W.~Ballmann. The key geometric
ingredients for the proof of Theorem~\ref{thm:main} are a charaterization of contracting isometries
(Theorem~\ref{thm:CharRankOne}) and a criterion ensuring that two given contracting isometries are
\emph{independent} and \emph{non-equivalent}. In fact, we believe that the hypotheses of Theorem~\ref{thm:main}
are unnecessarily strong for the existence of contracting isometries of buildings. To be more precise we propose
the following:

\begin{conj}\label{conj}
Let $(W,S)$ be an irreducible non-spherical and non-affine Coxeter system with $S$ finite, $X$ be a building of
type $(W, S)$ and $G$ be a group acting on $X$ by automorphisms without fixing any point at infinity (in the
\cat realization of $X$). Then  $G$ either stabilizes a proper residue or contains a contracting isometry.
\end{conj}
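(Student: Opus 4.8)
The plan is to establish the contrapositive: assuming that $G$ fixes no point at infinity of $X$ and that $G$ stabilises no proper residue, I would produce a contracting isometry inside $G$. All of this takes place in the complete \cat realisation $X$ of the building, which has dimension at most $|S|<\infty$.

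The first moves are soft reductions. If some $G$-orbit in $X$ were bounded, then the Bruhat--Tits fixed point theorem would give a fixed point of $X$, hence a $G$-stable spherical residue, against our hypothesis; thus $G$ has unbounded orbits, and in particular $G$ is infinite and $X$ is unbounded (here $(W,S)$ non-spherical is used). I then claim that $G$ must contain a \emph{hyperbolic} isometry --- one possessing an axis. Indeed, a group acting on a finite-dimensional complete \cat space with no bounded orbit and no hyperbolic element necessarily fixes a point at infinity; this can be read off either from the \cat structure theory of Caprace--Monod or, more directly in the present situation, from the combinatorics of walls of a building. Fix such a hyperbolic $g\in G$ with axis $\ell$; if $g$ is contracting, we are already done, so assume it is not.

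The substance of the argument is a Ballmann-type rank-rigidity step, and Theorem~\ref{thm:CharRankOne} is the tool that detects success. Since $g$ is not contracting, its axis $\ell$ bounds a flat half-plane, so the parallel set $P(\ell)$, a closed convex subspace preserved by the endpoint-stabiliser $\Stab_G(\ell)$, splits as a metric product $\RR\times Y$ with $Y$ a complete \cat space of strictly smaller dimension on which $\Stab_G(\ell)$ acts. Carrying this out recursively along the resulting chain of transverse factors --- whose dimensions strictly decrease, so the chain is finite --- one reaches, in the continued absence of a contracting isometry, one of two terminal configurations: a $G$-invariant (or finite-orbit) Euclidean flat, or a subgroup fixing a point of the Tits boundary that one feeds back into $\partial_T X$. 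In the first case, the Caprace--Haglund analysis of geometric flats in Davis complexes and Tits buildings --- where the relevant apartment is the Davis complex of the irreducible non-affine system $(W,S)$ and hence carries no invariant Euclidean subspace --- should force $G$ into the stabiliser of a proper residue, a contradiction. In the second case one promotes the boundary fixed point to a $G$-fixed point at infinity, using the convexity of fixed-point sets in the Tits boundary and the absence of a bounded $G$-orbit, again a contradiction. Hence one of the hyperbolic elements met along the way was in fact contracting.

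The step I expect to be the genuine obstacle --- and the reason the statement remains conjectural --- is exactly this rank-rigidity iteration in the stated generality: $X$ is neither proper nor acted upon cocompactly, so Ballmann's rank rigidity theorem and the Caprace--Monod structure theory (which rely on cocompactness or boundary-minimality) do not apply off the shelf. Concretely, the transverse factor $Y$ in $P(\ell)=\RR\times Y$ need not be a sub-building, the induced action on it may be extremely small, and it is unclear what forces the ``reducible directions'' isolated by Theorem~\ref{thm:CharRankOne} to be organised by a residue; similarly, propagating a boundary fixed point of a subgroup to one of all of $G$ is delicate for non-proper spaces. Making the heuristic \emph{``no negatively curved direction visible to $G$ $\Longrightarrow$ $G$ is confined to a proper residue''} into a theorem seems to demand either a sharpening of Theorem~\ref{thm:CharRankOne} that classifies the wall configurations capable of supporting flats in an arbitrary, possibly non-locally-finite building, or a purely combinatorial rank-rigidity statement for buildings that dispenses with cocompactness altogether.
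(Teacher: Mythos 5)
The statement you are proving is labelled as a \emph{conjecture} in the paper, and the paper does not prove it: it only establishes the special case of Proposition~\ref{prop:conj}, for locally finite buildings whose Weyl group is relatively hyperbolic with respect to its virtually abelian special subgroups. There the argument is entirely different from yours: one realises the building as a \emph{proper Gromov hyperbolic} metric space in the sense of Bowditch, with the Euclidean residues appearing as isolated parabolic points, and then invokes the elementary trichotomy of Proposition~\ref{prop:hyperbolic} (hyperbolic isometry, bounded orbit, or unique fixed point at infinity). So your attempt should be judged as a proof of an open statement, and it does contain a genuine gap --- one you candidly identify yourself.

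The gap is the rank-rigidity iteration. What Theorem~\ref{thm:CharRankOne} actually gives you is that a hyperbolic, non-contracting $g \in G$ stabilises \emph{some} proper residue of reducible or affine-times-finite type; the conjecture demands a proper residue stabilised by \emph{all of} $G$ (or a global fixed point at infinity), and nothing in your sketch bridges that quantifier change. Concretely: the parallel set $P(\ell) = \RR \times Y$ is only preserved by the stabiliser of the pair of endpoints of $\ell$ in $G$, not by $G$; the transverse factor $Y$ is not a sub-building and carries no residue structure, so the Caprace--Haglund flat analysis does not ``force $G$ into the stabiliser of a proper residue''; and promoting a fixed point at infinity of a subgroup obtained deep in the recursion to a fixed point of $G$ has no mechanism behind it (fixed-point sets in the Tits boundary are $\pi$-convex, but that convexity is of no use without control of how $G$ moves the subgroup's fixed set). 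Since $X$ is neither proper nor acted on cocompactly, none of the Ballmann or Caprace--Monod rank-rigidity machinery applies, and your own closing paragraph correctly diagnoses this as the heart of the matter. In short: the reduction to ``$G$ contains a hyperbolic element'' and the use of Theorem~\ref{thm:CharRankOne} to test individual elements are sound, but the passage from elementwise obstructions to a global $G$-invariant residue is exactly the open problem, not a step that can be waved through.
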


This conjecture holds in the special case where $W$ is Gromov hyperbolic, or more generally when $W$ is
relatively hyperbolic with respect to its maximal virtually Abelian subgroups. The latter condition is
completely characterized in~\cite{Ca07} and therefore yields the following:

\begin{prop}\label{prop:conj}
Assume that for any two infinite special subgroups $W_{J_1}, W_{J_2}< W$ such that $[W_{J_1}, W_{J_2}]=1$, the
group $\la W_{J_1} \cup W_{J_2}\ra$ is virtually abelian. Then any locally finite building of type $(W, S)$
satisfies Conjecture~\ref{conj}.
\end{prop}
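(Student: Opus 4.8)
The plan is to verify that the hypothesis of the proposition is exactly the combinatorial condition from \cite{Ca07} characterizing when $W$ is relatively hyperbolic with respect to the collection of its maximal virtually abelian (equivalently, maximal affine or finite) standard parabolic subgroups, and then to invoke the statement made in the paragraph preceding the proposition, namely that Conjecture~\ref{conj} holds whenever $W$ is relatively hyperbolic with respect to its maximal virtually abelian subgroups. So the real content is the reduction from relative hyperbolicity of the Coxeter group to the existence of a contracting isometry (or the stabilization of a proper residue) for a group $G$ acting on a locally finite building of type $(W,S)$ without a global fixed point at infinity.

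First I would recall the criterion from \cite{Ca07}: a Coxeter system $(W,S)$ with $S$ finite is relatively hyperbolic with respect to the family of standard parabolics generated by the maximal $J \subseteq S$ with $W_J$ infinite and of affine or reducible type precisely when any two infinite commuting special subgroups $W_{J_1}, W_{J_2}$ generate a virtually abelian subgroup; this matches the hypothesis verbatim, so that hypothesis gives us relative hyperbolicity of $W$ with respect to (conjugates of) those standard parabolics. Since the building is assumed locally finite, an apartment is a Davis complex for $(W,S)$, hence a proper $\CAT{0}$ space, and the whole building $X$ (in its $\CAT{0}$ realization) is proper.

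Next, with $X$ proper and $W$ relatively hyperbolic with respect to virtually abelian parabolics, I would run the argument sketched for the ``special case'' of Conjecture~\ref{conj}. Concretely: take a group $G \le \Aut(X)$ fixing no point at infinity. Using properness one gets that $G$ either has a bounded orbit — in which case it fixes a point of $X$ and hence stabilizes a spherical, in particular proper, residue — or $G$ has unbounded orbits. In the unbounded case, by the characterization of contracting isometries (Theorem~\ref{thm:CharRankOne}) applied inside an apartment, one looks for a hyperbolic element of $G$ whose axis is ``rank one'', i.e.\ does not bound a half-flat; the relative hyperbolicity of $W$ guarantees that any geodesic in an apartment that is not contained in (a bounded neighborhood of) one of the peripheral flats is contracting. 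If no element of $G$ has such a contracting axis, then every hyperbolic element of $G$ translates along a flat direction coming from one of the maximal virtually abelian parabolics; combined with the no-fixed-point-at-infinity hypothesis and a standard ping-pong/limit-set argument (as in Ballmann's alternative, or in the rank-one-isometry dichotomy discussed in the body of the paper), this forces $G$ to preserve the union of the peripheral structure, and hence to stabilize a proper residue of $X$.

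The main obstacle I expect is the last implication: ruling out the ``mixed'' case where $G$ is not inside a proper residue stabilizer yet still every hyperbolic element is confined to a flat. Here one must exploit that the peripheral subgroups in a relatively hyperbolic group form a malnormal-type family, so a group all of whose hyperbolic elements lie in peripheral flats and which fixes no boundary point must in fact be contained in a single conjugate of a peripheral subgroup; transporting this back to the building requires identifying the stabilizer of such a flat with the stabilizer of a residue of type $J$ for one of the maximal parabolics $W_J$. This identification — matching ``peripheral flat stabilizer in an apartment'' with ``proper residue stabilizer in $X$'' — is where the building structure (the correspondence between standard parabolics of $W$ and residues of $X$) does the essential work, and is the step I would write out most carefully.
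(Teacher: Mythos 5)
There is a genuine gap, and it sits exactly where you flag it. Your plan runs the case analysis ``bounded orbit, or else study the hyperbolic elements of $G$'', but for a group acting on a proper \cat space with unbounded orbits and no fixed point at infinity there is a priori no hyperbolic element at all to study (all elements could be elliptic or parabolic), and even granting hyperbolic elements, the final step --- showing that if every one of them is confined to a peripheral flat then $G$ stabilizes a proper residue --- is the entire difficulty of the proposition. The ``malnormal-type family / ping-pong / limit-set'' argument you invoke is a statement about relatively hyperbolic \emph{groups}; transporting it to an arbitrary subgroup $G<\Aut(X)$ of the (non-discrete, typically non-relatively-hyperbolic) automorphism group of the building is not a routine adaptation, and no mechanism is supplied for it. So the proposal identifies the right inputs but leaves the essential implication unproved.

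The paper avoids both problems by changing the metric rather than arguing inside the \cat realization. By \cite[Corollary~E]{Ca07}, the hypothesis forces the Euclidean residues of $X$ to be \emph{isolated}, so $X$ admits a realization as a proper Gromov hyperbolic space $|X|$ on which all of $\Aut(X)$ acts, with the Euclidean residues appearing as parabolic points at infinity (Bowditch). One then applies Proposition~\ref{prop:hyperbolic}, whose trichotomy (hyperbolic isometry / bounded orbit / unique fixed point at infinity) holds for arbitrary isometry groups of proper hyperbolic spaces, with no need for $G$ to contain any hyperbolic element in advance --- this is precisely the tool your plan is missing. Each branch then translates back: a hyperbolic isometry of $|X|$ preserves no Euclidean residue, hence is contracting on $X$ by Theorem~\ref{thm:CharRankOne}; a bounded orbit or a fixed parabolic point yields a stabilized proper residue; and a fixed conical point contradicts the assumption that $G$ fixes no point at infinity. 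If you want to salvage your approach, the cleanest fix is to adopt this change of realization rather than attempt the confinement argument in the \cat metric.
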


A major interest of a solution to Conjecture~\ref{conj} is that, when combined with the Burger--Monod vanishing
theorem~\cite[Theorems~20 and~21]{BurgerMonod} and the Bestvina--Fujiwara construction, it would yield an
interesting rigidity statement for higher rank lattices, in the same vein as those established
in~\cite{BestvinaFujiwara:MCG} and~\cite{Hamenstadt}. In order to illustrate this, we mention the following
result, which should be compared to~\cite[Theorem~2]{Hamenstadt_IsomHyperbolic}.

\begin{thm}\label{th:proper}
Let $X$ be a proper \cat space and $G < \Isom(X)$ be any group of isometries. Assume that $G$ contains a rank
one element.
Let $\overline G$ be the closure of $G$ in $\Isom(X)$ with the compact-open
topology.
 Then one of the following assertions holds, where $\Lambda$ denotes the limit set of $G$:
\begin{itemize}
\item[(1)] 
$G$ either fixes a point in $\bd X$ or stabilizes a geodesic line; in both cases, it possesses a subgroup of
index at most~$2$ with infinite Abelianization.

\item[(2)] $\overline G$ acts transitively on $\Lambda \times \Lambda - \Delta$, where $\Delta$ denotes the
diagonal. 

\item[(3)] $\overline G$ does not act transitively on $\Lambda \times \Lambda - \Delta$, and the spaces $\QH(G)$
and $\QH_c(\overline G)$ are both infinite-dimensional.
\end{itemize}
Furthermore, if $X$ has cocompact isometry group, then $(1)$ implies that $\overline G$ is amenable and $(2)$
implies that the space of continuous nontrivial quasi-morphisms $\QH_c(\overline G)$ vanishes.
\end{thm}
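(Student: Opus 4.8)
The plan is to dichotomise according to the position of $G$ relative to $\partial X$, disposing of the ``elementary'' alternative~(1) directly and then reducing the remaining case to a dichotomy for the action of $\overline G$ on pairs of boundary points. Fix once and for all a rank one element $g \in G$, with $g^+, g^- \in \partial X$ the two endpoints of an axis; note $g^\pm \in \Lambda$.

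First I would settle alternative~(1). If $G$ fixes a point $\xi \in \partial X$, then $\xi$ is fixed by $g$, hence $\xi \in \{g^+, g^-\}$, and the Busemann character $\beta_\xi \colon G \to \RR$ is a homomorphism non-zero on $g$ (its value being $\pm$ the translation length of $g$); thus $G$ has infinite Abelianisation. If instead $G$ stabilises a geodesic line $\ell$, then $\ell$ is a $g$-invariant geodesic, so $\ell \subseteq \Min(g)$ and $g$ translates along $\ell$; the kernel $G^+$ of the $G$-action on the pair of endpoints of $\ell$ has index $\le 2$ and carries the Busemann character along $\ell$, non-zero on $g$ or $g^2$, so $G^+$ has infinite Abelianisation. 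This proves alternative~(1). For the ``Furthermore'' clause in this alternative, assume $\Isom(X)$ cocompact: then $\overline G$ still fixes $\xi$ (resp.\ stabilises $\ell$); when $\ell$ is stabilised, $\overline G$ is compact-by-(closed subgroup of $\Isom(\RR)$) by properness of $X$, hence amenable; when $\xi$ is fixed, one uses that the rays to $\xi$ are coarsely unique (a feature of the rank one point $\xi$, cf.\ Theorem~\ref{thm:CharRankOne}) together with cocompactness of $\Isom(X)$ to see that $\Stab_{\Isom(X)}(\xi)$, and \emph{a fortiori} $\overline G$, is amenable.

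Next, assume $G$ neither fixes a point of $\partial X$ nor stabilises a geodesic line. A Ballmann-type ping-pong/dichotomy argument, using that $g^+$ is not $G$-fixed, produces from $g$ a rank one element independent of it; iterating gives infinitely many pairwise independent rank one elements, so $\Lambda$ is infinite and $G$ acts minimally on it. If $\overline G$ acts transitively on $\Lambda \times \Lambda \minus \Delta$ we are in alternative~(2), and there is nothing more to prove for the trichotomy; for the ``Furthermore'' clause, assume $\Isom(X)$ cocompact. Since $(g^+, g^-)$ lies in $\Lambda \times \Lambda \minus \Delta$ and is joined by a rank one geodesic, transitivity of $\overline G$ makes every pair in $\Lambda \times \Lambda \minus \Delta$ joined by a rank one geodesic, so the stabiliser in $\overline G$ of such a pair preserves the (coarsely unique) geodesic joining it and is therefore compact-by-(subgroup of $\Isom(\RR)$), hence amenable. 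A transitive action with amenable point stabilisers then yields $\QH_c(\overline G) = 0$ via Corollary~\ref{cor:Hamenstaedt:Prop6.4}.

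It remains to handle the case where $\overline G$ does \emph{not} act transitively on $\Lambda \times \Lambda \minus \Delta$: this is alternative~(3) and the heart of the proof. The key claim is that non-transitivity forces two rank one elements $g_1, g_2 \in G$ which are independent and \emph{non-equivalent} in the sense required by the Bestvina--Fujiwara machinery recalled in Section~\ref{sec:prel}; informally, if every pair of independent rank one elements were mutually equivalent, their axes would pairwise stay at bounded Hausdorff distance, and, combined with minimality of the $G$-action on $\Lambda$ and a passage to the closure, this would make $\overline G$ transitive on $\Lambda \times \Lambda \minus \Delta$. Granted such $g_1, g_2$ --- and using that rank one isometries of a proper \cat space automatically satisfy the properness hypothesis of the construction --- \cite{BestFuji_symsp} produces an infinite-dimensional subspace of $\QH(G)$; since $g_1, g_2 \in \overline G$ remain independent and non-equivalent there, the same construction makes $\QH(\overline G)$ infinite-dimensional, and the resulting quasi-morphisms are continuous because the underlying inhomogeneous counting quasi-morphism depends continuously on the orbit point (by convexity of the metric of $X$) and its homogenisation is the uniform limit $\tfrac1n h(\gamma^n) \to \bar h(\gamma)$ of continuous functions, whence $\QH_c(\overline G)$ is infinite-dimensional as well. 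The principal obstacle is precisely the implication ``$\overline G$ not transitive on boundary pairs $\Rightarrow$ two independent non-equivalent contracting isometries'': it forces one to match a coarse-geometric notion (non-equivalence of axes) against a topological one (transitivity of $\overline G$), and so requires a careful joint use of Theorem~\ref{thm:CharRankOne}, the independence/non-equivalence criterion, and the structure of $\overline G$; a minor additional point is the continuity of the resulting quasi-morphisms on the locally compact group $\overline G$.
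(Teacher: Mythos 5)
Your overall architecture matches the paper's: dispose of the elementary case via Busemann characters, and in the non-elementary case reduce alternative~(3) to producing two independent, inequivalent rank one elements and feeding them to Proposition~\ref{prop:infiniteHQH}. But the step you yourself single out as ``the principal obstacle'' --- non-transitivity of $\overline G$ on $\Lambda \times \Lambda - \Delta$ forces two independent, $\overline G$-inequivalent rank one elements --- is exactly the content of the proof, and your sketch of it is both incomplete and partly incorrect. Equivalence of $g_1$ and $g_2$ does \emph{not} mean their axes stay at bounded Hausdorff distance; it means some group element carries long segments of one axis uniformly close to the other, i.e.\ (after passing to a limit using properness of $X$) that the fixed-point pairs $(a_1,b_1)$ and $(a_2,b_2)$ lie in the same $\overline G$-orbit. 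The actual argument therefore needs two substantive ingredients that your appeal to ``minimality of the $G$-action on $\Lambda$ and a passage to the closure'' does not supply: first, that the fixed-point pairs of rank one elements of $G$ are \emph{dense} in $\Lambda \times \Lambda - \Delta$ (Proposition~\ref{prop:LimitSet}, proved by the Ballmann--Brin duality argument --- minimality on $\Lambda$ alone does not give density in the off-diagonal product); and second, that the $\overline G$-orbit of the fixed-point pair of a hyperbolic isometry is \emph{closed} in $\Lambda \times \Lambda$ (Lemma~\ref{lem:Hamenstaedt}). Only the combination ``dense set of pairs, each lying in a closed orbit, but no single orbit is everything'' yields two rank one elements in distinct orbits, hence inequivalent; one must then still check separately that their four endpoints are pairwise distinct to get independence. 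As written, your proof defers precisely this to a ``careful joint use'' of unstated lemmas, so the gap is genuine.

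A secondary problem is your continuity argument for $\QH_c(\overline G)$. The Bestvina--Fujiwara counting quasi-morphisms are integer-valued and are \emph{not} continuous functions of the group element (an integer-valued function continuous in $\gamma$ would be locally constant, which the counting functions are not), so you cannot obtain continuity of the homogenisation as a uniform limit of continuous functions. The correct route is the one the paper takes: the homogeneous quasi-morphisms produced take values in $\ZZ$, and any homogeneous quasi-morphism of a locally compact group with integer values is automatically continuous (Theorem~\ref{thm:appendix}). Your treatment of alternative~(1) and of the two ``Furthermore'' clauses is essentially in the spirit of Propositions~\ref{prop:amenable:independent} and~\ref{prop:amenable:fixator} and Corollary~\ref{cor:Hamenstaedt:Prop6.4}, though the amenability of the stabilizer of a rank one fixed point at infinity is itself a nontrivial structural result rather than a direct consequence of coarse uniqueness of rays.
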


Theorem~\ref{th:proper} has the following consequence, which directly relates to Conjecture~\ref{conj}:

\begin{cor}\label{cor:HigherRank}
Let $\Gamma < G = \prod_{\alpha \in A}\mathbf{G}_\alpha(k_\alpha)$ be an irreducible lattice, where $|A|>1$,
$(k_\alpha)_{\alpha \in A}$ is a finite family of local fields and the $\mathbf{G}_\alpha$ are connected simply
connected $k_\alpha$-almost simple groups of $k_\alpha$-rank $>1$. Let $X$ be a proper \cat space and $\varphi :
\Gamma \to \Isom(X)$ be any homomorphism. Then $\varphi(\Gamma)$ does not contain any rank one element.
\end{cor}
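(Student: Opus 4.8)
The plan is to argue by contradiction: suppose $\varphi(\Gamma)$ contains a rank one isometry of the proper $\CAT 0$ space $X$. Then we may apply Theorem~\ref{th:proper} to the group $G_0 = \varphi(\Gamma) < \Isom(X)$ and go through the three alternatives, showing that each of them contradicts one of the known rigidity properties of the higher-rank irreducible lattice $\Gamma$. Throughout, I would freely use that such a $\Gamma$ has Kazhdan's property~(T) (hence every homomorphic image is finitely generated with finite abelianization), that it has vanishing space of non-trivial quasi-morphisms $\QH(\Gamma) = 0$ by the Burger--Monod theorem \cite{BurgerMonod}, and that any homomorphic image of $\Gamma$ inherits $\QH(\cdot) = 0$ since $\QH$ is functorial for surjections (a quasi-morphism pulls back to a quasi-morphism, and triviality is preserved).

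First, case~(1) of Theorem~\ref{th:proper} is excluded because it produces a finite-index subgroup of $\varphi(\Gamma)$, hence of a quotient of $\Gamma$, with infinite abelianization; but a finite-index subgroup of a quotient of $\Gamma$ is itself a quotient of a finite-index subgroup of $\Gamma$, which again has property~(T) and thus finite abelianization --- a contradiction. Second, case~(3) is excluded immediately: it asserts $\QH(\varphi(\Gamma))$ is infinite-dimensional, whereas $\QH(\varphi(\Gamma)) = 0$ as noted above. Hence we are forced into case~(2): the closure $\overline{G_0}$ in $\Isom(X)$ acts transitively on $\Lambda \times \Lambda - \Delta$, where $\Lambda$ is the limit set of $\varphi(\Gamma)$.

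The main obstacle is ruling out case~(2). Here the idea is that a transitive action on $\Lambda \times \Lambda - \Delta$ of the (closed, locally compact, since $X$ is proper) group $\overline{G_0}$ is a very strong form of non-amenable dynamical behaviour on the boundary of a rank one space, and this should be incompatible with $\Gamma$ mapping densely into $\overline{G_0}$. Concretely, I would invoke a boundary-rigidity/superrigidity argument: the action of $\Gamma$ on the proper $\CAT 0$ space $X$ together with the rank one element gives a non-elementary action, so by the work on boundary maps for higher-rank lattices (Burger--Monod, or Monod--Shalom-type arguments, or the commensurator/normal subgroup machinery) one gets a $\Gamma$-equivariant measurable map from a Poisson--Furstenberg boundary of $\Gamma$ --- which factors through one of the factors $\mathbf{G}_\alpha(k_\alpha)/P_\alpha$ --- to a space of probability measures on $\Lambda$, forcing the closure of $\varphi(\Gamma)$ to contain a copy of (a quotient of) the simple algebraic group $\mathbf{G}_\alpha(k_\alpha)$ acting on $\Lambda$; but a connected simple algebraic group of rank $>1$ cannot act with a rank one boundary dynamics (its boundary action on any minimal set is that of a higher-rank group, with large parabolics), contradicting the fact that the rank one isometry induces North--South dynamics with a \emph{two-point} minimal set in its closure. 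Alternatively, and perhaps more cleanly, one can quote directly the result of Monod--Shalom or Hamenstädt \cite{Hamenstadt_IsomHyperbolic} that higher-rank lattices admit no non-elementary action on a (possibly non-proper, here proper) $\CAT 0$ space containing a rank one element --- but since the point of the corollary is presumably to \emph{deduce} this from Theorem~\ref{th:proper}, I would instead lean on the transitivity in case~(2) to identify $\overline{G_0}$ up to compact kernel with a rank one simple Lie group or a rank one automorphism group, and then note that property~(T) of $\Gamma$ together with Margulis superrigidity forbids a dense homomorphism $\Gamma \to \overline{G_0}$ into such a group (the target has no property~(T), or its $k$-rank is $1 < \operatorname{rk}\mathbf{G}_\alpha$). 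This last identification step --- extracting from ``transitive on $\Lambda \times \Lambda - \Delta$'' enough structure on $\overline{G_0}$ to reach a contradiction with higher-rank superrigidity --- is where the real work lies and where I would expect to spend most of the effort.
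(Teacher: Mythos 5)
Your reduction to case~(2) of Theorem~\ref{th:proper} is exactly what the paper does: case~(1) is killed by property~(T) (finite abelianization of finite-index subgroups of quotients of $\Gamma$), and case~(3) by the Burger--Monod vanishing $\QH(\Gamma)=0$ together with the pullback of quasi-morphisms along the surjection $\Gamma \to \varphi(\Gamma)$. Up to that point your argument is correct and matches the paper.

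The handling of case~(2), however, is a genuine gap, and you identify it yourself: you offer three alternative sketches (a measurable boundary map from the Poisson boundary forcing $\overline{\varphi(\Gamma)}$ to contain a simple factor; quoting an external non-action theorem; identifying $\overline{\varphi(\Gamma)}$ ``up to compact kernel with a rank one simple Lie group'' from the transitivity on $\Lambda\times\Lambda-\Delta$), but none is carried out, and the last one is not in general available --- $\overline{\varphi(\Gamma)}$ may well be totally disconnected, and transitivity on pairs of limit points does not by itself identify the group. The paper's actual argument is different from all three sketches and, notably, barely uses the transitivity at all: having excluded case~(1), the action is non-elementary, so one passes to a minimal closed convex invariant subset $Y$ and applies the \cat superrigidity theorem of Caprace--Monod (\cite[Theorem~9.4]{CaMo}, applicable because $\Gamma$ has (T) and is square-integrable) to extend $\varphi$ to a \emph{continuous} homomorphism $G \to \Isom(Y)$, which by irreducibility of $Y$ (it carries a rank one isometry) and minimality factors through a single simple factor $G_\alpha$ and is proper. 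The contradiction is then extracted from the algebraic structure of $G_\alpha$: using Prasad's decomposition $P=\centra_{G_\alpha}(h)\cdot U(h)$ one shows every proper parabolic subgroup of $G_\alpha$ fixes a point of $\Lambda$, hence point stabilizers in $G_\alpha$ are parabolic; but properness of the extended homomorphism and compactness of triple-point stabilizers force each parabolic to fix at most two points of $\Lambda$, which is incompatible with $k_\alpha$-rank $\geq 2$. So the missing ingredient in your proposal is precisely this superrigidity extension to a continuous homomorphism of the ambient group and the subsequent parabolic-subgroup counting; without some version of it, case~(2) remains open.
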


In particular, combining Proposition~\ref{prop:conj} with Corollary~\ref{cor:HigherRank}, one obtains:

\begin{cor}\label{cor:KM:RelHyp}
Let $\Gamma$ be as in Corollary~\ref{cor:HigherRank} and $X$ be a locally finite building whose type satisfies
the condition of Proposition~\ref{prop:conj}. Then any $\Gamma$-action on $X$ by automorphisms stabilises a
residue of spherical or Euclidean type.\qed
\end{cor}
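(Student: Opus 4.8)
The plan is to combine the trichotomy supplied by Proposition~\ref{prop:conj} with the absence of rank one elements coming from Corollary~\ref{cor:HigherRank}, and to run an induction on $|S|$.

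First I would record two elementary reductions. Since $X$ is locally finite, its \cat realization (the Davis complex) is a \emph{proper} \cat space, so Corollary~\ref{cor:HigherRank} applies to the isometric $\Gamma$-action it induces: the image of $\Gamma$ in the isometry group of the Davis complex contains no rank one element. On the other hand, in a proper \cat space a contracting isometry is always a rank one isometry in the sense of Ballmann (cf.\ Theorem~\ref{thm:CharRankOne}): were its axis $\ell$ to bound a flat half-plane, the nearest-point projections onto $\ell$ of large metric balls lying in that half-plane and disjoint from $\ell$ would have unbounded diameter, contradicting the contraction property of $\ell$. Hence \emph{$\Gamma$ contains no contracting isometry of $X$}.

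Now I would argue by induction on $|S|$. If $(W,S)$ is of spherical or Euclidean type there is nothing to prove, $X$ being then a residue of itself of the required type. Otherwise, since $(W,S)$ is neither spherical nor of Euclidean type, the hypothesis on $(W,S)$ --- that any two commuting infinite special subgroups of $W$ generate a virtually abelian group --- implies that $(W,S)$ has a unique infinite irreducible factor $(W_0,S_0)$, which is non-spherical and non-affine, all remaining factors being finite. Since $(W_0,S_0)$ is canonically attached to $(W,S)$, the corresponding direct factor $X_0$ of $X \cong X_0 \times X_{\mathrm{fin}}$ (with $X_{\mathrm{fin}}$ spherical) is $\Gamma$-invariant and locally finite, its type satisfies the hypothesis of Proposition~\ref{prop:conj}, and a $\Gamma$-stable residue of $X_0$ of spherical or Euclidean type yields one of $X$. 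So I may assume $(W,S)$ irreducible, non-spherical and non-affine. By Proposition~\ref{prop:conj} the statement of Conjecture~\ref{conj} holds for $X$, and so the $\Gamma$-action falls into one of three cases: (a) $\Gamma$ contains a contracting isometry; (b) $\Gamma$ stabilises a proper residue; (c) $\Gamma$ fixes a point of $\partial X$. Case (a) has just been ruled out. In case (b), a $\Gamma$-stable proper residue $R$ is a locally finite building of type $(W_J,J)$ with $|J|<|S|$, and this type again satisfies the hypothesis of Proposition~\ref{prop:conj}, its special subgroups being special subgroups of $W$; the inductive hypothesis applied to the $\Gamma$-action on $R$ then produces a residue of $R$ --- hence of $X$ --- of spherical or Euclidean type stabilised by $\Gamma$.

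The hard part will be case (c), in which $\Gamma$ fixes some $\xi \in \partial X$. If $\Gamma$ has a bounded orbit then, $X$ being proper, it fixes a point $p \in X$; the minimal cell of the Davis complex containing $p$ is $\Gamma$-stable, and cells of the Davis complex correspond to spherical residues, so $\Gamma$ stabilises a residue of spherical type and we are done. If instead every $\Gamma$-orbit is unbounded, I would invoke the combinatorial structure of $X$: to $\xi$ one should be able to attach a canonical --- hence $\Gamma$-invariant --- residue $R_\xi$ inside which $\xi$ is ``generic'', meaning it lies in the visual boundary of no proper sub-residue of $R_\xi$. If $R_\xi \subsetneq X$, one concludes by induction exactly as in case (b). If $R_\xi = X$, one uses that the stabiliser in $\Aut(X)$ of a generic point at infinity of a locally finite building of irreducible non-spherical non-affine type is amenable (its ``unipotent'' radical being topologically locally finite and the quotient virtually abelian), contradicting the fact that $\Gamma$, being a lattice in the non-amenable group $\prod_\alpha \mathbf{G}_\alpha(k_\alpha)$, is itself non-amenable. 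Identifying the residue $R_\xi$ supporting a boundary point, and establishing the amenability of the corresponding stabiliser --- the one place where genuine building geometry, as opposed to bare \cat geometry, comes into play --- is the main obstacle I foresee.
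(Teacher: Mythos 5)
Your reductions (properness of the Davis realization, ``contracting $\Rightarrow$ rank one'', the elimination of finite factors, and the induction through a $\Gamma$-stable proper residue) are all sound, and cases (a) and (b) of your trichotomy are handled correctly. The gap is exactly where you suspect it: case (c), a global fixed point $\xi$ in the \cat boundary $\bd X$. Proposition~\ref{prop:conj} (i.e.\ Conjecture~\ref{conj}) simply says nothing in that case, and the two ingredients you propose to close it --- a canonical $\Gamma$-invariant residue $R_\xi$ ``supporting'' $\xi$, and amenability of the stabilizer in $\Aut(X)$ of a boundary point not lying in the boundary of any proper residue --- are both substantive unproven claims, not routine verifications. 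The second one in particular is delicate: stabilizers of points at infinity of buildings are in general far from amenable (think of non-minimal parabolics in a Euclidean building), so everything hinges on a precise notion of genericity and on the machinery of \cite{Caprace:amenable}, which in the paper is only invoked for fixed points of \emph{rank one} isometries, where the transversal space is bounded. As written, case (c) is a plan, not a proof.

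The route the paper intends (the corollary carries a \qed precisely because it is meant to drop out of the \emph{proof} of Proposition~\ref{prop:conj}, not just its statement) avoids the \cat boundary altogether. One runs the trichotomy of Proposition~\ref{prop:hyperbolic} in the auxiliary proper Gromov-hyperbolic realization $|X|$ of \cite{Bowditch}, in which the Euclidean residues are the isolated pieces and correspond to the parabolic points of $\bd_\infty|X|$. There the three outcomes translate cleanly: a hyperbolic isometry of $|X|$ is a contracting isometry of $X$, hence rank one by Theorem~\ref{thm:CharRankOne}, excluded by Corollary~\ref{cor:HigherRank}; a bounded orbit in $|X|$ yields a fixed point either in $X$ (hence a $\Gamma$-stable spherical residue) or inside a horoball (hence a $\Gamma$-stable Euclidean residue); and a unique fixed point of $\bd_\infty |X|$ is controlled by the relatively hyperbolic structure (parabolic points give Euclidean residues, and the remaining possibility is ruled out using that $\Gamma$ has property (T), so admits no unbounded Busemann quasi-character on $|X|$). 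If you want to salvage your argument, replace your case (c) by this analysis in $|X|$ rather than in $\bd X$; otherwise you must actually construct $R_\xi$ and prove the amenability statement you are leaning on.
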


Let us finally mention that, independently of M.~Bestvina and K.~Fujiwara, Ursula Hamenst\"adt developed a
slightly different approach providing a general axiomatic setting for groups acting on topological spaces by
homeomorphisms to admit an infinite-dimensional space of non-trivial quasi-morphisms \cite{Hamenstadt}. It turns
out that her approach, when applied to the present context, would also provide information on the second bounded
cohomology with nontrivial coefficients.

\medskip
The paper is organized as follows. Section~\ref{sec:prel} is preliminary. The aim of Section~\ref{sec:rigidity}
is the proof of Theorem~\ref{th:proper} and its corollaries. It contains several geometrical results on rank one
isometries of proper \cat spaces which might be of some independent interest. Sect.~\ref{sec:Coxeter} is devoted
to Coxeter groups; its main purpose is to show that irreducible Coxeter groups which are not virtually abelian
contain many contracting isometries in their natural action on the Davis complex. Finally, hyperbolic isometries
of buildings are studied in Sect.~\ref{sect:buildings}.

\bigskip \noindent \textbf{Convention.}
In order to avoid any confusion, we remark that quasi-morphisms
are sometimes called ``quasi-homomorphisms'' in the literature (for example
in \cite{BestvinaFujiwara:MCG},\cite{BestFuji_symsp}).
This explains the notation  $\mathrm{QH}$ 
for the space of quasi-morphisms;
and $\QH$ for the non-trivial ones in this paper, 
following \cite{BestvinaFujiwara:MCG},\cite{BestFuji_symsp}.
We note however that in the monograph \cite{Cal}, the space of quasi-morphisms is
denoted by $\hat Q$ and the space of homogeneous quasi-morphisms
is denoted by $Q$, while the latter is denoted by  $\mathrm{HQH}$ (for `\emph{homogeneous quasi-homomorphisms}') in \cite{BestvinaFujiwara:MCG},\cite{BestFuji_symsp}. 
In the present paper, no special notation is used for the space of homogeneous quasi-morphisms.

Stable commutator length of $g$ is denoted by $\mathrm{scl}(g)$
in this paper, but it is sometimes called `\emph{stable length}',
and also denoted by $\|g\|$ (as for example in \cite{Bavard}).

\bigskip \noindent \textbf{Acknowledgements.} Both authors thank the MSRI, Berkeley and particularly the organizers of
the special program on geometric group theory which was held there
in the Fall 2007 and during which this work was initiated. The first
author also acknowledges support from the European Post-Doctoral
Institute (EPDI). He is grateful to Ursula Hamenst\"adt for
suggesting that some of her results on bounded cohomology would also
provide relevant information in the context of the present paper.
Section~\ref{sec:rigidity} below was inspired by this conversation.
The second author would like to thank Mladen Bestvina for intensive
discussions on Coxeter groups. We thank Nicolas Monod for
stimulating conversations and for numerous useful comments and
suggestions on a preliminary version of this paper. 
We are grateful to Danny Calegari for his interest and useful suggestions.
Finally, we
thank the referee for his/her comments.
\section{Rank one elements, contracting isometries and quasi-morphisms}\label{sec:prel}

Let $X$ be a \cat space. A geodesic line $L$ in $X$ is said to have \textbf{rank one} if it does not bound a
flat half-plane. The line $L$ is said to be \textbf{$B$-contracting} for some $B \geq 0$ if for every metric
ball $C$ disjoint from $L$ the projection $\pi_L(C)$ has diameter at most $B$.

An isometry $\gamma \in \Isom(X)$ is said to have \textbf{rank one} if it is hyperbolic and if some (and hence
any) of its axes has rank one. Similarly $\gamma$ is called \textbf{$B$-contracting} if it is hyperbolic and if
some of its axes is $B$-contracting. It is called \textbf{contracting} if it is $B$-contracting for some $B \geq
0$.

Recall from \cite[Thm.~5.4]{BestFuji_symsp} that if $X$ is proper, then an isometry has rank one if and only if
it is $B$-contracting for some $B \geq 0$. We will see later (see Theorem~\ref{thm:CharRankOne}) that for some
class of finite-dimensional \cat spaces, this assertion holds even without the properness assumption.

Following \cite{BestFuji_symsp}, we will use:
\begin{defn}
Let $\gamma_1, \gamma_2 \in \Gamma$ be hyperbolic elements and fix a base point $x_0 \in X$.

The elements $\gamma_1$ and $\gamma_2$ are called \textbf{independent} if the map
$$\ZZ \times \ZZ \to [0, \infty) : (m, n) \mapsto d(\gamma_1^m.x_0, \gamma_2^n.x_0)$$
is proper.

The elements $\gamma_1$ and $\gamma_2$ are called \textbf{$\Gamma$-equivalent} (notation: $\gamma_1 \sim_\Gamma
\gamma_2$) if the following condition holds: there exists $\delta >0$ such that for each $r > 0$ there is $g \in
\Gamma$ with $d(\gamma_1^n.x_0, g \gamma_2^n.x_0) < \delta$ for all integers $n \in [-r, r]$.
\end{defn}

Notice that both properties are independent of the choice of the base point. Furthermore two elements $\gamma_1$
and $\gamma_2$ are independent if and only if $\gamma_1$ and $\gamma_2\inv$ are independent. When the
$\Gamma$-action is proper, both notions  can be made more precise:

\begin{lem}\label{lem:irreversible}
Let $\Gamma$ act properly discontinuously on a complete \cat space $X$. Then:
\begin{itemize}
\item[(i)] Two hyperbolic elements $\gamma_1, \gamma_2$ are independent if and only if the canonical attracting
fixed point $\gamma_1^+$ of $\gamma_1$ at infinity is distinct from both the attracting and the repulsive fixed
point of $\gamma_2$ at infinity.

\item[(ii)] Two contracting elements $\gamma_1, \gamma_2$ satisfy $\gamma_1 \sim_\Gamma \gamma_2$ if and only if
some positive powers of $\gamma_1$ and $\gamma_2$ are conjugate.

\item[(iii)] If two contracting elements $\gamma_1$ and $\gamma_2$ are not independent, then $\gamma_1
\sim_\Gamma \gamma_2$ or $\gamma_1 \sim_\Gamma \gamma_2\inv$.

\item[(iv)] If $\Gamma$ is non-elementary and contains a contracting element, then it contains two contracting
elements $\gamma_1$ and $\gamma_2$ such that $\gamma_1 \not \sim_\Gamma \gamma_2$ and $\gamma_1 \not \sim_\Gamma
\gamma_2\inv$.
\end{itemize}
\end{lem}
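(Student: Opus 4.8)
The plan is to treat the four items in sequence, using in an essential way that properness of the action ties the dynamics at infinity to the combinatorics of orbits. For (i), I would first recall that a hyperbolic isometry $\gamma$ acting on a complete \cat space has exactly two fixed points $\gamma^\pm \in \bd X$ and that the sequences $\gamma^n.x_0$ and $\gamma^{-n}.x_0$ converge to $\gamma^+$ and $\gamma^-$ respectively, while $d(\gamma^n.x_0, x_0)$ grows linearly in $|n|$. Independence of $\gamma_1,\gamma_2$ means the function $(m,n)\mapsto d(\gamma_1^m.x_0,\gamma_2^n.x_0)$ is proper, i.e.\ tends to infinity as $(m,n)\to\infty$. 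If $\gamma_1^+$ coincides with $\gamma_2^+$ or $\gamma_2^-$, then along a suitable sequence $(m_k,n_k)\to\infty$ the points $\gamma_1^{m_k}.x_0$ and $\gamma_2^{n_k}.x_0$ both converge to the same boundary point; here I need the quantitative fact (a standard consequence of the \cat inequality and convexity of the metric, or of properness of the action guaranteeing that stabilizers of points are finite so that the translation lengths are genuinely positive) that two geodesic rays pointing to the same boundary point stay within bounded distance after reparametrization, hence $d(\gamma_1^{m_k}.x_0,\gamma_2^{n_k}.x_0)$ stays bounded, contradicting properness. Conversely if $\gamma_1^+\notin\{\gamma_2^+,\gamma_2^-\}$ one shows properness directly: on the region where $|m|$ dominates, $\gamma_1^m.x_0$ is deep in a neighbourhood of $\gamma_1^{\pm}$ which is disjoint from a neighbourhood of $\gamma_2^{\pm}$, and Ballmann-type estimates force the distance to blow up; the remaining cases are handled symmetrically, with the proper discontinuity used to rule out the degenerate situation where orbit points accumulate without the parameters going to infinity.

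For (ii), the ``if'' direction is immediate: if $\gamma_1^a$ is conjugate to $\gamma_2^b$ by some $h\in\Gamma$, one can arrange $a=b$ after passing to further powers and then take $g=h$ (adjusting by the finitely many isometries fixing the relevant segment) to get uniform closeness of the orbit segments, giving $\gamma_1\sim_\Gamma\gamma_2$. The ``only if'' direction is the substantive one: assuming $\gamma_1\sim_\Gamma\gamma_2$, for each $r$ we get $g_r\in\Gamma$ with $d(\gamma_1^n.x_0,g_r\gamma_2^n.x_0)<\delta$ for $|n|\le r$; since the action is proper discontinuous there are only finitely many group elements moving $x_0$ within any bounded distance, so the sequence $(g_r)$ takes only finitely many values ``locally'', and a pigeonhole/compactness argument produces a single $g$ and arbitrarily long matching segments, whence $g\gamma_2^{n}g\inv.x_0$ and $\gamma_1^{n}.x_0$ stay boundedly close for all $n$. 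Since $\gamma_1$ and $g\gamma_2 g\inv$ are both contracting, their axes stay within bounded Hausdorff distance, and two contracting isometries with boundedly-close axes translating the ``same'' quasi-geodesic must have a common power — this uses the contracting hypothesis to upgrade ``boundedly close orbits'' to ``equal axes up to finite index'', after which $\gamma_1^k$ and $(g\gamma_2 g\inv)^k$ agree up to an element of the (finite) pointwise stabilizer of the axis, giving the conjugacy of positive powers.

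Item (iii) follows by combining (i) and (ii): if $\gamma_1,\gamma_2$ are contracting but not independent, then by (the contracting refinement of) (i) the point $\gamma_1^+$ lies in $\{\gamma_2^+,\gamma_2^-\}$; replacing $\gamma_2$ by $\gamma_2\inv$ if necessary we may assume $\gamma_1^+=\gamma_2^+$, and then (since contracting isometries sharing an endpoint share both endpoints and a common axis up to bounded distance) one argues exactly as in the ``only if'' part of (ii) that positive powers are conjugate, hence $\gamma_1\sim_\Gamma\gamma_2$, and in the other case $\gamma_1\sim_\Gamma\gamma_2\inv$. Finally, for (iv) I would take a contracting element $\gamma$ and exploit non-elementarity: since $\Gamma$ does not fix a point or pair of points at infinity, there is $h\in\Gamma$ with $h.\gamma^+\notin\{\gamma^+,\gamma^-\}$; then a ping-pong argument on disjoint boundary neighbourhoods shows that for large $N$ the elements $\gamma_1:=\gamma^N$ and $\gamma_2:=h\gamma^N h\inv$ are independent contracting elements, and by part (iii) (contrapositive) independence already forbids $\gamma_1\sim_\Gamma\gamma_2$ and $\gamma_1\sim_\Gamma\gamma_2\inv$ — or, if one prefers, one checks directly that their fixed point pairs $\{\gamma^\pm\}$ and $\{h.\gamma^\pm\}$ are distinct and disjoint, so no conjugate of a power of one can share an axis with a power of the other, ruling out equivalence via (ii). The main obstacle I anticipate is the careful bookkeeping in (ii): turning the ``for each $r$ there is $g_r$'' quantifier into a single $g$ valid for all $n$, and then converting closeness of orbit \emph{segments} into an honest relation between the axes and hence a common power, is where properness of the action and the contracting hypothesis must both be used with some care.
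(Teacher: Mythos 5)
Your item (iv) contains a genuine error, and it is the item that carries the most weight (the paper does not reprove it but cites \cite[Prop.~6.2 and~6.5(4)]{BestFuji_symsp}). You take $\gamma_1=\gamma^N$ and $\gamma_2=h\gamma^N h^{-1}$ and argue that independence rules out $\Gamma$-equivalence ``by the contrapositive of (iii)''. But (iii) says that \emph{non}-independence implies equivalence; its contrapositive tells you nothing about an independent pair, and in fact a conjugate $h\gamma^N h^{-1}$ is \emph{always} $\Gamma$-equivalent to $\gamma^N$: take $g=h^{-1}$ in the definition, or apply (ii), since the first powers are literally conjugate. Your fallback claim that ``no conjugate of a power of one can share an axis with a power of the other'' fails for the same reason: $\gamma_2$ \emph{is} a conjugate of a power of $\gamma_1$. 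Independence is a statement about the particular axes, whereas inequivalence is a statement about conjugacy classes of powers up to inversion; producing two contracting elements satisfying both genuinely requires escaping the conjugacy class, which is the content of the Bestvina--Fujiwara propositions the paper invokes, and conjugation alone cannot achieve it.

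There is also a gap in the ``if'' direction of (i). The hypothesis excludes only $\gamma_1^{+}$ from $\{\gamma_2^{+},\gamma_2^{-}\}$ and says nothing about $\gamma_1^{-}$, so the ``disjoint neighbourhoods of $\gamma_1^{\pm}$ and $\gamma_2^{\pm}$'' you separate are not available (and $X$ is only assumed complete, not proper). The paper's argument is different and is precisely where proper discontinuity of the \emph{action} enters: if the distance map is not proper, the two axes contain asymptotic rays, and properness then forces a relation $\gamma_1^{m''}=\gamma_2^{n''}$ with nonzero exponents, so the axes are parallel and \emph{both} endpoints are shared --- in particular sharing exactly one endpoint cannot occur. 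Finally, in (ii) your reduction ``one can arrange $a=b$ after passing to further powers'' is false: for $\gamma_1=\gamma^{2}$ and $\gamma_2=\gamma^{3}$ one has $(\gamma^{2})^{3}=(\gamma^{3})^{2}$, yet no equal positive powers are conjugate and the two elements are not $\Gamma$-equivalent (their translation lengths differ, while equivalence forces equal translation lengths); so (ii) must be read with a common exponent, and the easy direction should be proved in that form. The ``only if'' half of your (ii) --- pigeonholing the elements $g_r$ via properness to get a single $g$, then extracting $\gamma_1^{k}=g\gamma_2^{k}g^{-1}$ by a second properness argument --- is a sound reconstruction of the argument the paper cites, and your (iii) matches the paper's derivation from (i) and (ii) in spirit.
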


Assertion (i) means in other words that given two rank one elements, either their axes are parallel (and the
element are dependent) or the respective attracting and repulsive fixed points of the two elements at infinity
form four distinct points of the visual boundary $\bd X$.

\begin{proof}
(i). The `only if' part is clear. Let $\gamma_1$ and $\gamma_2$ be hyperbolic elements with non-parallel axes,
say  $\ell_1$ and $\ell_2$ respectively, and assume for a contradiction that $\gamma_1$ and $\gamma_2$ are not
independent. Then $\ell_1$ and $\ell_2$ contain rays $\rho_1 \subset \ell_1$ and $\rho_2 \subset \ell_2$ which
are at finite Hausdorff distance from one another. By properness of the $\Gamma$-action, it follows that there
exist integers $m \neq m'$ and $n \neq n'$ such that $g_1^m g_2^n = g_1^{m'} g_2^{n'}$. Thus $g_1^{m''} =
g_2^{n''}$ for some nonzero $m''$ and $n''$. This implies that $\ell_1$ and $\ell_2$ are at finite Hausdorff
distance from one another, hence parallel. This is absurd.

\medskip \noindent (ii). %
See \cite[Prop.~6.5(3)]{BestFuji_symsp}.

\medskip \noindent (iii). %
Follows from (i), (ii) and the fact that the stabilizer the parallel set of any axis is virtually cyclic by
properness.

\medskip \noindent (iv). %
Follows from \cite[Prop.~6.2 and~6.5(4)]{BestFuji_symsp}.
\end{proof}

\begin{remark}\label{rem:conjugacy}
Important to observe is that, when the action is discrete as above, \emph{the property of being
$\Gamma$-inequivalent is conjugacy--invariant}. More precisely, given $\gamma_1, \gamma_2 \in \Gamma$ with
$\gamma_1 \not \sim_\Gamma \gamma_2$ and $\gamma_1 \not \sim_\Gamma \gamma_2\inv$ as in (iv), then, for any $g
\in \Gamma$, we have $\gamma_1 \not \sim_\Gamma g\gamma_2 g\inv$ and furthermore $\gamma_1$ and $g\gamma_2g\inv$
are independent. This follows from (ii) and (iii) in Lemma~\ref{lem:irreversible}.
\end{remark}

The construction of quasi-morphisms that we will use was performed by M.~Bestvina and K.~Fujiwara
\cite[Th.~6.3]{BestFuji_symsp}; notice that there is no discreteness assumption whatsoever on the action:

\begin{prop}\label{prop:infiniteHQH}
Let $\Gamma < \Isom(X)$ be any group of isometries of a complete \cat space $X$. Assume that $\Gamma$ contains
two independent elements which are not $\Gamma$-equivalent. Then $\QH(\Gamma)$ is infinite-dimensional.\qed
\end{prop}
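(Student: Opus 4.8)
We sketch the Bestvina--Fujiwara argument behind the statement. The plan is to produce, for every $N$, a family of $N$ linearly independent elements of $\QH(\Gamma)$. Write $a,b\in\Gamma$ for the two independent, non-$\Gamma$-equivalent hyperbolic elements; here $a$ and $b$ are moreover \emph{contracting}, and we use freely that each admits a $B$-contracting axis. The plan has three stages: (i) build from $a,b$ a sequence $g_1,g_2,\dots\in[\Gamma,\Gamma]$ of hyperbolic contracting elements that are pairwise independent and pairwise non-$\Gamma$-equivalent, with no $g_n$ being $\Gamma$-equivalent to $g_n^{-1}$; (ii) attach to each $g_n$ a Bestvina--Fujiwara counting quasi-morphism $h_n$; (iii) evaluate the $h_n$ on the $g_m$ and conclude.

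Stage (i) is a ping-pong argument. Independence of $a$ and $b$ forces the four points $a^{\pm},b^{\pm}\in\partial X$ to be distinct, and the $B$-contracting hypothesis supplies the uniform Morse-type control under which a broken geodesic built from long segments of the two axes is a global quasi-geodesic. Hence, for $p$ large, $a^p$ and $b^p$ freely generate a subgroup $F<\Gamma$ whose orbit map $F\to X$ is a quasi-isometric embedding and all of whose infinite-order elements are hyperbolic and $B'$-contracting for a uniform $B'$; moreover the axis of a cyclically reduced $g\in F$ coarsely follows the periodic path $g^{\infty}$, so two cyclically reduced elements that are not cyclic conjugates have axes fellow-travelling only along bounded pieces. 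Inside $[F,F]\subseteq[\Gamma,\Gamma]$ one then selects cyclically reduced elements with pairwise incompatible patterns --- for instance $g_n:=[a^p,b^{pn}]$, whose lengths as words in $a^p,b^p$ are pairwise distinct --- adjusting to a sufficiently asymmetric choice so that, by the usual product trick, no $g_n$ is $\Gamma$-equivalent to its inverse. Reading these incompatibilities off the axes shows the $g_n$ are pairwise independent and pairwise non-$\Gamma$-equivalent, with $g_n\not\sim_\Gamma g_m^{-1}$ for $m\neq n$.

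Stage (ii) is the Bestvina--Fujiwara construction proper. Fix $x_0\in X$, and for each $n$ let $w_n$ be a subpath of an axis of $g_n$ whose length is a large multiple of $\|g_n\|$, large relative to $B'$. For $\gamma\in\Gamma$ let $|w_n|(\gamma)$ be the maximal number of essentially disjoint copies of the oriented path $w_n$ occurring along a geodesic from $x_0$ to $\gamma.x_0$, put $c_n(\gamma):=|w_n|(\gamma)-|\overline{w_n}|(\gamma)$ with $\overline{w_n}$ the reversed path, and set $h_n(\gamma):=\lim_{k\to\infty}c_n(\gamma^{k})/k$. The $B'$-contracting property, via the standard Morse-type lemmata, shows that $|w_n|(\gamma)$ is independent of the chosen geodesic up to a bounded error, that $c_n$ is a quasi-cocycle, and therefore that $h_n$ is a well-defined homogeneous quasi-morphism on $\Gamma$; this is the content of \cite[Th.~6.3]{BestFuji_symsp}.

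Stage (iii). Since $w_n$ is a long piece of the axis of $g_n$ and $g_n\not\sim_\Gamma g_n^{-1}$, a geodesic from $x_0$ to $g_n^{k}.x_0$ carries of order $k$ disjoint copies of $w_n$ and no copy of $\overline{w_n}$, whence $h_n(g_n)>0$; while for $m\neq n$ the incompatibility of patterns allows only boundedly many copies of $w_n$ and of $\overline{w_n}$ along a geodesic from $x_0$ to $g_m^{k}.x_0$, whence $h_n(g_m)=0$. Suppose now $\sum_{j=1}^{N}c_j\,h_{n_j}=\varphi+\beta$ with $\varphi\colon\Gamma\to\RR$ a homomorphism and $\beta$ a bounded function; evaluating on $g_{n_j}^{k}$, dividing by $k$ and letting $k\to\infty$ gives $c_j\,h_{n_j}(g_{n_j})=\varphi(g_{n_j})$, and $g_{n_j}\in[\Gamma,\Gamma]$ forces $\varphi(g_{n_j})=0$, so $c_j=0$ for every $j$. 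Thus the classes of $h_{n_1},\dots,h_{n_N}$ are linearly independent in $\QH(\Gamma)$; as $N$ was arbitrary, $\QH(\Gamma)$ is infinite-dimensional. The hard part will be stage (i) --- making the ping-pong precise and, above all, proving the local-to-global (Morse) property for broken $B$-contracting geodesics with uniform constants, so that the $g_n$ are genuinely contracting and have pairwise incompatible axes. This geometric input is the heart of the Bestvina--Fujiwara machinery, and it is exactly where the hypotheses that $a$ and $b$ be independent and non-$\Gamma$-equivalent enter.
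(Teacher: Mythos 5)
The paper gives no proof of this proposition: it is stated with a reference to Bestvina--Fujiwara \cite[Th.~6.3]{BestFuji_symsp} and closed immediately. Your sketch is a faithful outline of that cited argument (ping-pong on the contracting axes to produce pairwise independent, pairwise inequivalent elements of $[\Gamma,\Gamma]$, counting quasi-morphisms built from long axis segments, and a triangular evaluation to get linear independence), so it follows essentially the same route as the source the paper relies on, with the hard local-to-global Morse-type lemmas correctly identified as the actual content of \emph{loc.~cit.}
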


\section{Proper \cat spaces with rank one isometries and rigidity of higher rank lattices}\label{sec:rigidity}

The present section is aimed at proving Theorem~\ref{th:proper}; there is no logical dependence between this and
the subsequent sections.

Given a proper \cat space $X$, the compact-open topology gives  $\Isom(X)$ the structure of a locally compact
second countable topological group.

\subsection{The stabilizer of a point at infinity fixed by a rank one element}

\begin{prop}\label{prop:amenable:fixator}
Let $X$ be a proper \cat space with cocompact isometry group and $G < \Isom(X)$ be any group of isometries. Let
$\xi \in \bd X$ be a point fixed by some rank one element of $\Isom(X)$. Then the stabilizer $\overline G_\xi$
is amenable.
\end{prop}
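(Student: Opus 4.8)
The plan is to reduce first to the full stabiliser $H:=\Isom(X)_\xi$: since $X$ is proper, $\Isom(X)$ is locally compact, $H$ is a closed subgroup, and $\overline G_\xi=\overline G\cap H$ is a closed subgroup of $H$; as closed subgroups of amenable locally compact groups are amenable, it is enough to prove $H$ amenable. Next I would fix a rank one element $\gamma\in\Isom(X)$ with $\gamma\xi=\xi$, arranging (replacing $\gamma$ by $\gamma^{-1}$) that $\xi=\gamma^+$; by \cite[Thm.~5.4]{BestFuji_symsp} and properness, $\gamma$ is $B$-contracting for some $B$. Using a Busemann function $b_\xi$ centred at $\xi$, the cocycle $\beta(g)=b_\xi(g.x)-b_\xi(x)$ defines a continuous homomorphism $H\to\RR$; since $\RR$ is amenable and amenability passes to extensions, everything reduces to showing that $N:=\ker\beta$ --- the isometries fixing $\xi$ and preserving every horosphere centred at $\xi$ --- is amenable. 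Note that $\gamma\notin N$ but normalises $N$.

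Put $\zeta:=\gamma^-$ and $N_\zeta:=N\cap\Isom(X)_\zeta$. The first thing I would establish is that $N_\zeta$ is compact. It stabilises the parallel set $P$ of any axis of $\gamma$, which --- the axis being rank one, hence not bounding a flat half-plane --- splits as $P\cong\RR\times Y$ with $Y$ bounded; $Y$ is closed in the proper space $P$, so $Y$ is compact. As $\beta|_{N_\zeta}=0$, the group $N_\zeta$ acts on $P$ trivially on the $\RR$-factor, so its orbits lie in the bounded slices $\{s\}\times Y$; a closed subgroup of $\Isom(X)$ with a bounded orbit is compact because $\Isom(X)$ acts properly on $X$, so $N_\zeta$ is compact. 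Both $\gamma$ and $N_\zeta$ act on $Y$ by isometries and hence fix its circumcentre $y^\ast$; therefore $L:=\RR\times\{y^\ast\}$ is an axis of $\gamma$ which is fixed pointwise by $N_\zeta$. I then fix $x_0:=L(0)$, with $L$ parametrised so that $\gamma.L(t)=L(t+\tau)$ ($\tau$ the translation length) and $b_\xi(L(t))=-t$.

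The geometric heart of the argument is a uniform estimate: for every $n\in N$ the function $t\mapsto d(n.L(t),L(t))$ on $[0,\infty)$ is convex, bounded, hence non-increasing, with $\lim_{t\to\infty}d(n.L(t),L(t))\le R_0$ for some $R_0=R_0(B)$ not depending on $n$. This is where $B$-contraction enters: the ray $n.L|_{[0,\infty)}$ is asymptotic to $\xi=L^+$, and a ray asymptotic to the endpoint of a $B$-contracting geodesic eventually lies within a uniformly bounded neighbourhood of it --- realising the ray as a limit of segments $[n.x_0,L(s)]$, $s\to\infty$, each of which by contraction returns to within $O(B)$ of $L$ after an initial stretch and then fellow-travels $L$; since $n$ preserves $b_\xi$ exactly ($\beta(n)=0$), the foot points on $L$ stay $b_\xi$-aligned, which converts this into the bound on $d(n.L(t),L(t))$. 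Granting this, let $\alpha$ be conjugation by $\gamma^{-1}$ on $N$ and set $\Omega:=\{n\in N:d(n.x_0,x_0)\le R\}$ for a fixed $R>R_0$: then $\Omega$ is compact (properness), $N_\zeta$-bi-invariant (as $N_\zeta$ fixes $x_0$) and a neighbourhood of $N_\zeta$, one has $\alpha(\Omega)\subseteq\Omega$ (because $d(\alpha(n).x_0,x_0)=d(n.L(\tau),L(\tau))\le d(n.x_0,x_0)$ by monotonicity), and $\bigcup_{k\ge0}\alpha^{-k}(\Omega)=N$ (because $d(\alpha^k(n).x_0,x_0)=d(n.L(k\tau),L(k\tau))$ decreases to a limit $\le R_0<R$).

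Finally I would run a Følner argument. With $\mu$ a Haar measure on $N$ and $\Omega_k:=\alpha^{-k}(\Omega)$ --- an increasing exhaustion of $N$ by compact sets of positive finite measure --- one has $g\Omega_k=\alpha^{-k}(\alpha^k(g)\Omega)$ and hence $\mu(g\Omega_k\triangle\Omega_k)/\mu(\Omega_k)=\mu(\alpha^k(g)\Omega\triangle\Omega)/\mu(\Omega)$ for $g\in N$. The set $\{\alpha^k(g)\}_{k\ge0}$ is relatively compact (properness again, since $d(\alpha^k(g).x_0,x_0)\le d(g.x_0,x_0)$), and all of its accumulation points lie in $N_\zeta$: a limit $h$ of $\alpha^{k_j}(g)=\gamma^{-k_j}g\gamma^{k_j}$ satisfies $h.\zeta=\lim_j\gamma^{-k_j}(g.\zeta)=\zeta$, because $g.\zeta\ne\xi$ (otherwise $g$ would map the line $L$ to a ``line'' with both endpoints $\xi$) and $\gamma$ has North-South dynamics on $\bd X$ with repelling point $\zeta$. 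Since $N_\zeta$ fixes $x_0$ it preserves $\Omega$, and $h\mapsto\mu(h\Omega\triangle\Omega)$ is continuous, so $\mu(\alpha^k(g)\Omega\triangle\Omega)\to0$; the normalised restrictions $\mu|_{\Omega_k}/\mu(\Omega_k)$ then form an asymptotically invariant net of probability measures, and $N$ is amenable, as required. The step I expect to be the main obstacle is exactly the uniform estimate on $\lim_t d(n.L(t),L(t))$, together with the compactness of $N_\zeta$: these are the genuinely geometric inputs, where the contraction property of $L$ has to be leveraged into control that is uniform over $n\in N$; the reductions via $\beta$ and the final averaging are soft. (Cocompactness of $\Isom(X)$, although available, does not seem to be needed here beyond properness.)
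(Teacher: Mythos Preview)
Your argument is correct and genuinely different from the paper's. The paper does not carry out any direct amenability argument: after reducing to a closed minimal group $G$, it invokes the structure theorem of \cite[Th.~4.7]{CaMo} to split into the cases where $G$ is an almost connected simple Lie group or is totally disconnected, and then cites \cite[Th.~6.4]{CaMo} and \cite[Th.~1.5]{Caprace:amenable} respectively (the latter via the observation that the transversal space $X_\xi$ is bounded at a rank one fixed point). Your route bypasses all of this black-box structure theory: the Busemann character reduces to the horospherical kernel $N$, and you exhibit an explicit F{\o}lner exhaustion of $N$ built from the contracting dynamics of $\gamma$, using only the uniform estimate $\lim_t d(n.L(t),L(t))\le R_0(B)$ and North--South dynamics on $\bd X$. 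Each step checks out: boundedness of the cross-section $Y$ gives compactness of $N_\zeta$ and a canonical $\gamma$-axis fixed pointwise by $N_\zeta$; the uniform bound $r_\infty\le B$ (hence $R_0\le 2B$) follows from the standard projection-counting argument you sketch; and the ratio computation $\mu(g\Omega_k\triangle\Omega_k)/\mu(\Omega_k)=\mu(\alpha^k(g)\Omega\triangle\Omega)/\mu(\Omega)$ together with continuity of translation in $L^1$ finishes the F{\o}lner verification. Your closing remark is also right: the paper's route genuinely consumes cocompactness through \cite[Th.~4.7]{CaMo}, whereas your argument uses only properness of $X$, so you in fact establish a slightly stronger statement. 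What the paper's approach buys is brevity once the cited machinery is in place; what yours buys is self-containment and the removal of the cocompactness hypothesis.
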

\begin{proof}
By the very nature of the statement to be established, there is no loss of generality in assuming that $G$ is closed.
If $G$ stabilizes a geodesic line, then the desired conclusion clearly holds. We assume henceforth that $G$ does
not stabilize any line. In particular, the existence of a rank one element implies that $G$ does have a global
fixed point at infinity. Thus $X$ possesses a nonempty minimal $G$-invariant closed convex subset $Y \subseteq
X$ (see \cite[Prop~3.1]{CaMo}). The point-wise stabilizer of $Y$ being compact, hence amenable, there is no loss
of generality in assuming that $Y=X$.

By \cite[Th.~4.7]{CaMo} the group $G$ is either an almost connected simple Lie group or totally disconnected. In
the former case, the desired result follows from \cite[Th.~6.4]{CaMo}. In the latter case, the result follows
from \cite[Th.~1.5]{Caprace:amenable} since for any fixed point $\xi$ of a rank one isometry, the transversal
space $X_\xi$ as defined in~\emph{loc.~cit.} is bounded.
\end{proof}

The following statement parallels Proposition~6.4 in \cite{Hamenstadt_IsomHyperbolic}:

\begin{cor}\label{cor:Hamenstaedt:Prop6.4}
Let $X$ be a proper \cat space with cocompact isometry group and $G < \Isom(X)$ be a closed group of isometries
with limit set $\Lambda$. Assume that $G$ contains a rank one element. If $ G$ acts transitively on $\Lambda
\times \Lambda - \Delta$, then the space of continuous nontrivial quasi-morphisms $\QH_c( G)$ vanishes.
\end{cor}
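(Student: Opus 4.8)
The plan is to reduce to the known vanishing result by leveraging the rank one element to build a good action on a Gromov hyperbolic space. First I would recall the main technical input from \cite{BestFuji_symsp}: if $G$ contains a rank one element, then the \emph{projection complex} or, more classically, a suitable space on which $G$ acts by isometries carries Gromov-hyperbolic features relevant for quasi-morphisms; but more to the point, the statement we need is cleanest via the unpublished result of N.~Monod and Hamenst\"adt cited in Remark~\ref{rem:intro}(c): a quasi-distance-transitive action on a Gromov-hyperbolic geodesic metric space forces vanishing of $\QH$. So the core of the proof is to manufacture, from the data $(X,G,\Lambda)$ with $G$ transitive on $\Lambda \times \Lambda - \Delta$, a Gromov-hyperbolic geodesic space on which $G$ acts quasi-distance-transitively, and to do so in a way that respects continuity so that we control $\QH_c$ and not merely $\QH$.

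The key steps, in order, would be: (1) Observe that since $G$ contains a rank one element $\gamma$, the limit set $\Lambda$ has at least three points, hence (being $G$-invariant and $G$ acting transitively on pairs) $\Lambda$ is ``large''; in particular $G$ does not fix a point of $\bd X$ and does not stabilize a geodesic line, so by \cite[Prop.~3.1]{CaMo} one may pass to a minimal nonempty $G$-invariant closed convex subset $Y$, and the pointwise stabilizer of $Y$ is compact. Replace $X$ by $Y$. (2) Use the rank one element together with transitivity on $\Lambda\times\Lambda-\Delta$ to see that \emph{every} pair of distinct points of $\Lambda$ is joined by a rank one axis (translate the axis of $\gamma$), so the ``core'' of $X$ determined by $\Lambda$ behaves hyperbolically: the union of these axes, or better the closed convex hull, with its induced length metric, is Gromov hyperbolic. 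Here I would cite the contracting property (Section~\ref{sec:prel}) which makes such convex hulls of rank-one geodesics $\delta$-hyperbolic with $\delta$ depending only on the contraction constant. (3) On this hyperbolic space $H$, show the $G$-action is quasi-distance-transitive: given $x,y,x',y'\in H$ with $d(x,y)=d(x',y')$, approximate the geodesics $[x,y]$ and $[x',y']$ by segments of rank one axes with prescribed endpoint pairs in $\Lambda$, apply transitivity on $\Lambda\times\Lambda-\Delta$ to align the endpoint pairs, and then use the contracting/Morse property to convert endpoint-alignment into uniform closeness of the segments, absorbing the bounded error into the constant $C$. (4) Apply the Monod--Hamenst\"adt theorem to conclude $\QH(G)=0$, hence a fortiori $\QH_c(G)=0$; alternatively, run the argument with $\overline G$ in place of $G$ and note that the continuous quasi-morphisms restrict compatibly.

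The main obstacle I expect is step (3), specifically passing from transitivity on \emph{pairs of boundary points} to \emph{quasi-distance-transitivity on the hyperbolic space}: one must control not only the endpoints but the ``time parametrization'' along the geodesics, i.e. match the basepoints along the segments up to bounded error. The mechanism should be that once two rank one axes share both endpoints they are at bounded Hausdorff distance (by the contracting property, their parallel set is a bounded neighborhood), and a further element of $G_\xi\cap G_\eta$ (the pointwise stabilizer of the two shared endpoints) can translate along the common axis direction to fix up the parametrization; but one has to be sure that such stabilizers act with sufficiently large (cobounded) orbits along the axis, which uses again that $\gamma$ or a conjugate is a hyperbolic rank one element with both endpoints in the relevant pair. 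A secondary subtlety is ensuring the constructed hyperbolic space $H$ is genuinely geodesic and that the $G$-action on it is continuous in the compact-open topology inherited from $\Isom(X)$, so that the conclusion is really about $\QH_c$; this is where the cocompactness hypothesis on $\Isom(X)$ and the properness of $X$ are used, guaranteeing that $H$ is proper and $\Isom(H)$ is again locally compact with $G\to\Isom(H)$ continuous and proper kernel.
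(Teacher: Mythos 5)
Your route is entirely different from the paper's, and it has a genuine gap at its central step. The crux of your argument is the claim in step (2) that the closed convex hull of the rank one axes joining pairs of points of $\Lambda$ is Gromov hyperbolic ``with $\delta$ depending only on the contraction constant''. No such statement is available: the contracting property controls each individual geodesic, not the convex hull of a family of them, and a convex subset of a \cat space all of whose boundary pairs are joined by uniformly contracting geodesics could a priori still contain arbitrarily large flat discs; nothing you invoke excludes this (and the union of the axes with its induced length metric is not obviously geodesic either). Step (3) has a secondary but real problem that you yourself flag: quasi-distance-transitivity must hold for arbitrary pairs of points of $H$, not only for points lying on axes with endpoints in $\Lambda$, and matching parametrisations requires the stabiliser of an arbitrary pair in $\Lambda\times\Lambda-\Delta$ to act coboundedly on the corresponding axis, which you do not establish. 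Finally, the whole scheme rests on an unpublished theorem (Monod/Hamenst\"adt) that the paper cites only as background in Remark~\ref{rem:intro}, and if your argument worked it would yield the stronger conclusion $\QH(G)=0$ for the abstract group --- a statement the paper conspicuously refrains from making in case~(2) of Theorem~\ref{th:proper}.

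The paper's proof is much softer and involves no auxiliary hyperbolic space. Let $g$ be the given rank one element with fixed points $a,b\in\bd X$. Transitivity of $G$ on $\Lambda\times\Lambda-\Delta$ yields the bounded product decomposition $G=G_a\cdot G_{\{a,b\}}\cdot G_b$. By Proposition~\ref{prop:amenable:fixator} (which is where the cocompactness of $\Isom(X)$ and the structure theory of \cite{CaMo} and \cite{Caprace:amenable} enter), each of these three subgroups is amenable, so every continuous nontrivial quasi-morphism is bounded on each factor and hence, the product having bounded length, on all of $G$. If you want to rescue your approach, the missing ingredient is a proof that the minimal $G$-invariant convex subset is Gromov hyperbolic under these hypotheses; that is neither known nor needed.
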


\begin{proof}
Let $g \in G$ be the given rank one element and $a, b \in \bd X$ denote its fixed points. Let also $G_{\{a,
b\}}$ denote the stabilizer of the pair $\{a, b\}$ in $G$. By assumption, for each $h \in G$ there exists $g'
\in G$ such that $g'.h \in G_b$, and we may choose $g' \in G_a$ or $g' \in G_{\{a, b\}}$ according as $h(b) \neq
a$ or $h(b) \neq a$. This shows that the group $G$ is a product
$$G = G_a \cdot G_{\{a, b\}} \cdot G_b.$$
By Proposition~\ref{prop:amenable:fixator} all subgroups  $G_a$, $G_b$  and $G_{\{a, b\}}$ are amenable. In
particular the spaces  $ H^2_{\mathrm{cb}}(G_a, \RR)$, $  H^2_{\mathrm{cb}}(G_b, \RR)$ and
$H^2_{\mathrm{cb}}(G_{\{a, b\}}, \RR)$ vanish and any continuous nontrivial quasi-morphism of $G_a$, $G_b$ or
$G_{\{a, b\}}$ is bounded. Thus the same holds for $G$ as desired.
\end{proof}

\begin{remark}\label{rem:cell}
If $X$ is a \cat cell complex with finitely many types of cells and $G < \Isom(X)$ acts by cellular
transformations, then Proposition~\ref{prop:amenable:fixator} and Corollary~\ref{cor:Hamenstaedt:Prop6.4} remain
true without the hypothesis that $X$ has cocompact isometry group: this follows from the same reasoning as
above, using a straightforward adaptation of the arguments in~\cite{Caprace:amenable}.
\end{remark}

\subsection{On the existence of independent rank one elements}

\begin{prop}\label{prop:amenable:independent}
Let $X$ be a proper \cat space and $G < \Isom(X)$ be any subgroup. Assume that $G$ contains a rank one element.
Then one of the following assertions holds:
\begin{itemize}
\item[(1)] $G$ either fixes a point in $\bd X$ or stabilize a geodesic line; in both cases, it possesses a
subgroup of index at most~$2$ with infinite Abelianization. Furthermore, if $X$ has cocompact isometry group,
then $\overline G < \Isom(X)$ is amenable.

\item[(2)] $G$ contains two independent rank one elements; in particular $\overline G$ contains a discrete
non-Abelian free subgroup.
\end{itemize}
\end{prop}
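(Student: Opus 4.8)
The plan is to argue by dichotomy on whether $G$ has a bounded or unbounded orbit, and in the unbounded case, on whether $G$ fixes a point at infinity. First I would dispose of the case where $G$ has a bounded orbit: then $G$ fixes the circumcentre of the orbit closure, so $G$ stabilizes a bounded set and hence fixes a point of $X$. But $G$ contains a rank one element, which is hyperbolic and therefore fixes no point of $X$ — contradiction. So $G$ has unbounded orbits, and the limit set $\Lambda = \overline{G.x_0} \cap \bd X$ is nonempty.

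\medskip
Next I would invoke the structure theory of the limit set for groups with a rank one element. Let $g \in G$ be the given rank one element with attracting and repelling fixed points $g^+, g^- \in \bd X$. A standard North--South dynamics / ping-pong argument (as in \cite{BestFuji_symsp}) shows: if there exists $h \in G$ with $\{h.g^+, h.g^-\} \cap \{g^+, g^-\} = \emptyset$, then for a suitable large power $n$, the conjugate $hg^nh\inv$ is a rank one element whose fixed-point pair is disjoint from $\{g^+,g^-\}$; by Lemma~\ref{lem:irreversible}(i) (or rather its non-discrete analogue, the very definition of independence via properness of $(m,n)\mapsto d(g^m.x_0,(hg^nh\inv)^n.x_0)$ using the contracting property) the elements $g$ and $hg^nh\inv$ are independent rank one elements, putting us in case (2); moreover two independent rank one elements generate a discrete free group by the ping-pong lemma, using the contracting axes to build disjoint attracting neighbourhoods. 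So it remains to treat the case where \emph{every} $h \in G$ satisfies $\{h.g^+,h.g^-\} \cap \{g^+,g^-\} \neq \emptyset$, i.e. $G$ preserves the set $\{g^+,g^-\}$ \emph{setwise}, or more precisely $G$ acts on $\bd X$ leaving the pair $\{g^+,g^-\}$ invariant.

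\medskip
In that remaining case I would show we land in (1). If $G$ fixes both $g^+$ and $g^-$, then $G$ preserves the union of all geodesic lines from $g^-$ to $g^+$, which is a nonempty closed convex subset (the parallel set of an axis of $g$); since $g$ is rank one, this parallel set is a product of a line with a bounded set, so $G$ fixes a point at infinity and in fact virtually stabilizes a geodesic line. If instead $G$ swaps $g^+$ and $g^-$, then the index-$2$ subgroup $G^0$ of elements fixing each of $g^\pm$ stabilizes the line, and again we are in case (1). In either subcase, the homomorphism $G \to \RR$ (resp. $G^0 \to \RR$) recording the translation length along the axis of $g$ — the Busemann cocycle associated to $g^+$ — is a nontrivial homomorphism because $g$ (or $g^2$) has positive translation length; hence $G$ (resp. its index-$2$ subgroup) has infinite abelianization. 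Finally, the amenability clause under the cocompactness hypothesis: if $G$ fixes $\xi \in \bd X$, then $\overline G < \overline G_\xi$ which is amenable by Proposition~\ref{prop:amenable:fixator}; if $G$ stabilizes a line $L$, then $\overline G$ acts on $L \cong \RR$ by isometries with kernel the pointwise stabilizer of $L$, which is compact (being the stabilizer of two points at infinity together with a point of $L$, and closed in a group acting properly cocompactly — or directly amenable by Proposition~\ref{prop:amenable:fixator} applied to $\xi = $ an endpoint of $L$), so $\overline G$ is an extension of a subgroup of $\Isom(\RR)$ by an amenable group, hence amenable.

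\medskip
The main obstacle I expect is making the ping-pong argument work \emph{without} the properness of the $G$-action (properness of $X$ is assumed, but $G$ need not act discretely): one must verify that independence in the sense of the definition preceding Lemma~\ref{lem:irreversible} — properness of $(m,n)\mapsto d(g_1^m.x_0,g_2^n.x_0)$ — genuinely follows from the disjointness of the fixed-point pairs of the two contracting isometries, and separately that the generated free subgroup is \emph{discrete} in $\overline G$. Both of these should follow from the quantitative contracting estimates of \cite[\S5]{BestFuji_symsp}, which give that a contracting axis coarsely captures all orbit points, but the bookkeeping of constants — choosing the power $n$ large enough relative to the contraction constant $B$ and the distance $d(x_0, \text{axis})$ — is where the real work lies. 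A secondary subtlety is the trichotomy "fixes both / swaps / neither": one must be careful that the limit set could a priori be large even when $\{g^+,g^-\}$ is $G$-invariant, but invariance of a $2$-point set forces $\Lambda \subseteq \{g^+, g^-\}$ after all, since any other limit point would be moved off $\{g^+,g^-\}$ by some group element, contradicting invariance — so this case really is the "elementary" one.
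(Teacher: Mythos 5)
Your argument breaks at the reduction step. From the assumption that no conjugate $hgh\inv$ is independent of $g$ you may conclude only that $h.\{g^+,g^-\}$ \emph{meets} $\{g^+,g^-\}$ for every $h\in G$; this does not imply that $G$ preserves the pair $\{g^+,g^-\}$ setwise (your ``i.e.'' is false). For instance every $h$ could fix $g^+$ while scattering $g^-$ over the boundary, or the $G$-orbit of the pair could a priori consist of three pairwise-intersecting pairs $\{a,b\},\{b,c\},\{a,c\}$ with no common point. Consequently your trichotomy (fixes both / swaps / neither) and the conclusion that $G$ virtually stabilizes the parallel set of the axis do not cover all the remaining cases; note that alternative (1) of the statement deliberately includes groups that fix a single point of $\bd X$ \emph{without} stabilizing any geodesic line, and your argument never produces that case. (By contrast, your worry about independence in the absence of discreteness is a non-issue: two rank one elements with four distinct fixed points are independent simply because two orbit sequences at bounded distance going to infinity must converge to the same boundary point.)

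The paper closes exactly this gap with an additional step: assuming no two rank one elements of $G$ are independent (so any two share a fixed point), it proves that every \emph{triple} of rank one elements has a common fixed point. The ``triangle'' configuration --- $g_1,g_2$ with common attracting point $a$ and repelling points $b_1,b_2$, and $g_3$ with fixed points $\{b_1,b_2\}$ --- is ruled out by conjugating $g_3$ by a large power of $g_1\inv$: North--South dynamics drags its fixed-point pair into a small neighbourhood of $b_1$, producing an element independent of $g_2$. A Helly-type argument for pairwise-intersecting two-point sets then yields a single point $\xi\in\bd X$ fixed by all rank one elements of $G$; the normal subgroup $N\lhd G$ they generate fixes $\xi$, and since $N$ contains $g$ it fixes at most two boundary points, so $G$ (which permutes $\Fix(N)$) has a subgroup of index at most~$2$ fixing $\xi$. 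From there your Busemann-character argument for infinite abelianization and the appeal to Proposition~\ref{prop:amenable:fixator} for amenability go through as in the paper. You should rebuild the non-(2) case along these lines rather than via setwise invariance of $\{g^+,g^-\}$.
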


\begin{proof}
Any rank one element acts on the boundary at infinity with a North-South dynamics, see
\cite[Lem.~3.3.3]{BallmannLN}. Therefore, if $\overline G$ contains two independent rank one elements then the
existence of a discrete non-Abelian free subgroup follows from a standard ping-pong argument. We assume
henceforth that $G$ does not contain any pair of independent rank one elements. In particular any two rank one
elements of $G$ have a common fixed point in $\bd X$.

We claim that every triple of rank one elements of $G$ have a common fixed point at infinity. Otherwise there
would exist three rank one elements $g_1, g_2, g_3$ such that $g_1$ and $ g_2$ have a common attracting fixed
point, say $a$, and there respective repelling fixed points $b_1$ and $b_2$ are precisely the fixed points of
$g_3$. Conjugating $g_3$ by a large positive power of $g_1\inv$, we then obtain a rank one element which is
independent of $g_2$. This is a contradiction.

>From this claim, it follows that all rank one elements of $G$ have a common fixed point at infinity, say $\xi$.
In particular, the normal subgroup $N \lhd G$ generated by all rank one elements of $G$ fixes $\xi$. Since any
rank one element of $G$ has exactly two fixed points at infinity, it follows that $N$ has at most two fixed
points as well. In particular $G$ has a subgroup of index at most~$2$ which fixes $\xi$, and the Busemann
character centered at $\xi$ yields a homomorphism of this subgroup taking values in~$\RR$ (see \emph{e.g.}
\cite[\S4.3]{Caprace:amenable}).

Passing to the closure, we deduce that $\overline G$ has a closed subgroup of index~$2$ which fixes $\xi$. Thus
$\overline G$ is amenable as soon as $\Isom(X)$ is cocompact by Proposition~\ref{prop:amenable:fixator} and
Assertion~(1) holds.
\end{proof}

\begin{prop}\label{prop:LimitSet}
Let $X$ be a proper \cat space and $G < \Isom(X)$. Assume that $G$ contains two independent rank one elements.
Then the set of pairs of fixed points of rank one elements of $G$ is dense in $\Lambda \times \Lambda - \Delta$,
where $\Lambda$ denotes the limit set of $G$ and $\Delta \subset \Lambda \times \Lambda$ the diagonal.
\end{prop}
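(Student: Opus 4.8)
The plan is to exploit the North--South dynamics of rank one elements together with a ping-pong/attractor argument to produce rank one elements whose fixed point pairs approximate any prescribed pair $(\eta_1, \eta_2) \in \Lambda \times \Lambda - \Delta$. First I would fix the two given independent rank one elements, call them $g$ and $h$, with attracting/repelling fixed points $(g^+, g^-)$ and $(h^+, h^-)$; by independence all four points are distinct, and each of $g, h$ acts on $\bd X$ with North--South dynamics (\cite[Lem.~3.3.3]{BallmannLN}). The key preliminary observation is that the orbit $G.g^+$ — or more precisely the set of attracting fixed points of conjugates $k g k\inv$ for $k \in G$ — is dense in $\Lambda$: indeed $\Lambda$ is by definition the closure of any $G$-orbit in $\bd X$, and using the North--South dynamics one checks that $k g^n k\inv \to$ something whose attracting point is $k.g^+$ in a suitable limiting sense; combined with the fact that $\Lambda$ is the smallest nonempty closed $G$-invariant subset of $\bd X$ when $G$ contains a rank one element (it equals the closure of the set of fixed points of rank one elements, cf.\ the Ballmann--Buyalo type arguments), the set of such attracting points is dense in $\Lambda$.

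Next, given a target pair $(\eta_1, \eta_2)$ with $\eta_1 \neq \eta_2$ in $\Lambda$, I would approximate: choose conjugates $\gamma_1 = k_1 g k_1\inv$ and $\gamma_2 = k_2 g k_2\inv$ (or conjugates of $g$ and $h$) that are independent rank one elements whose attracting fixed points $\gamma_1^+, \gamma_2^+$ are close to $\eta_1, \eta_2$ respectively — this is possible by the density just discussed together with the observation (as in Remark~\ref{rem:conjugacy} and Lemma~\ref{lem:irreversible}(i), transported to the proper \cat setting) that independence of conjugates is a generic/open condition, so small perturbations of $k_1, k_2$ keep the pair independent. Now consider the element $\gamma_1^N \gamma_2^{-N}$ for large $N$, or a product of high powers along the lines of the classical Schottky construction: by North--South dynamics, for $N$ large this product is a hyperbolic isometry whose attracting fixed point lies in a small neighborhood of $\gamma_1^+$ (hence of $\eta_1$) and whose repelling fixed point lies in a small neighborhood of $\gamma_2^+$ (hence of $\eta_2$). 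One must check that such a product is again rank one; this follows because its axis spends most of its length fellow-travelling the (contracting) axes of $\gamma_1$ and $\gamma_2$, and a controlled concatenation of uniformly contracting geodesic segments is again contracting — alternatively, invoke \cite[Thm.~5.4]{BestFuji_symsp} to pass between rank one and $B$-contracting, and use a stability estimate for contracting geodesics under bounded perturbation.

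The main obstacle I expect is the last point: verifying that the Schottky-type product $\gamma_1^N \gamma_2^{-N}$ (or a suitable variant with an intermediate element to avoid degenerate configurations, e.g.\ when $\gamma_1^-$ or $\gamma_2^-$ interferes) is genuinely rank one and that its fixed points converge to the prescribed pair as the neighborhoods shrink and $N \to \infty$. This requires a quantitative North--South dynamics statement (uniform contraction of neighborhoods under high powers of a $B$-contracting isometry, with constants depending only on $B$ and the translation length) and a lemma that a geodesic which closely follows a long $B$-contracting segment, then transitions, then follows another, is itself $B'$-contracting for a controlled $B'$; such lemmas are implicit in \cite{BallmannLN} and \cite{BestFuji_symsp}. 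Once these geometric estimates are in hand, the density of the fixed-point pairs in $\Lambda \times \Lambda - \Delta$ follows by a routine diagonal argument: approximate $\eta_1, \eta_2$ by attracting fixed points of independent conjugates, then push the pair of fixed points of the product arbitrarily close to $(\eta_1,\eta_2)$ by taking $N$ large enough.
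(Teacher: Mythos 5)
Your overall strategy (approximate the target pair by attracting fixed points of conjugates, then run a Schottky/ping-pong product $\gamma_1^N\gamma_2^{-N}$ to realize a rank one element with fixed points near the target) is a genuinely different route from the paper, which instead follows \cite[Th.~4.1]{BallmannBrin}: it introduces the notion of a \emph{dual} pair of boundary points, reduces the proposition to showing that any two points of $\Lambda$ are dual, and then invokes the machinery of \cite[Sect.~III.3]{BallmannLN} to convert dual pairs into limits of fixed-point pairs of rank one elements. The duality formulation buys you exactly the two things you flag as obstacles: it packages the ``a ping-pong configuration produces a rank one isometry with controlled fixed points'' step into a citation, so one never has to verify by hand that a specific product is rank one.

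There is, however, a genuine gap in your first step, and it is not the one you identify as the main obstacle. You assert that the attracting fixed points of conjugates $kgk\inv$ are dense in $\Lambda$ because ``$\Lambda$ is by definition the closure of any $G$-orbit in $\bd X$.'' That is not the definition: $\Lambda$ is the set of accumulation points in $\bd X$ of an orbit $G.x_0$ with $x_0 \in X$. A point $\xi \in \Lambda$ is a priori only a limit of interior orbit points $g_n.x_0$, and the whole difficulty is to upgrade this to $\xi = \lim_n g_n.a$ for $a$ a fixed point of a rank one element. This is precisely what \cite[Lem.~4.4]{BallmannBrin} provides (if $g_n.x_0 \to \xi$ then $g_n.a_1 \to \xi$ or $g_n.b_1 \to \xi$, where $a_1, b_1$ are the endpoints of a rank one axis), and its proof uses the contracting property of that axis; it is the crux of the paper's argument. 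Your alternative appeal to minimality of $\Lambda$ (``it equals the closure of the set of fixed points of rank one elements'') is essentially a restatement of the conclusion you are trying to reach, so as written the argument is circular at this point. With \cite[Lem.~4.4]{BallmannBrin} supplied, and with \cite[Lem.~3.3.2]{BallmannLN} (or the stability of contracting geodesics under concatenation, as you sketch) to justify that $\gamma_1^N\gamma_2^{-N}$ is rank one with fixed points in the prescribed neighbourhoods, your construction does go through; one should also take the mild care the paper takes (multiplying by powers of the second element) to keep the approximating conjugates independent of the reference rank one element.
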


\begin{proof}
The proof goes along the same lines as that of \cite[Th.~4.1]{BallmannBrin}. For the reader's convenience, we
include the details of the argument.

\smallskip Following \emph{loc.~cit.}, we shall say that a pair of points $\xi, \eta \in \bd X$ is \textbf{dual}
(relative to $G$) if for all neighbourhoods $U$ and $V$ of $\xi$ and $\eta$ in the visual compactification
$\overline X$, there exists $g \in G$ such that
$$g(\overline X - U) \subset V \hspace{.5CM} \text{and} \hspace{.5CM} g\inv(\overline X - V) \subset U. $$
Notice that the set of points which are dual to some fixed $\xi \in \bd X$ is closed (with respect to the cone
topology).

The relevance of this notion comes from the following. The two fixed points of any given rank one element are
dual to each other: this follows from~\cite[Lem.~3.3.3]{BallmannLN}. Conversely, the results of
\cite[Sect.~III.3]{BallmannLN} imply that if $\{\xi, \eta\} \subset \bd X$ is a dual pair, then there exists a
sequence of rank one elements $g_n \in G$ such that the attracting and repelling fixed points of $g_n$ tend to
$\xi$ and $\eta$ as $n$ tends to infinity. All we need to show is thus that any point of $\bd X$ is dual to any
other.

Let $\gamma_1, \gamma_2 \in G$ be two independent rank one elements,  and $a_1, a_2$ (resp. $b_1, b_2$) denote
their respective attracting (resp. repelling) fixed points. By considering products of the form
$\gamma_1^m.\gamma_2^n$ for appropriately chosen integers $m,n$, one shows that any two distinct points in
$\{a_1, a_2, b_1, b_2\}$ are dual to one another.

Let now $\xi \in \Lambda - \{a_1, a_2, b_1, b_2\}$. Pick a base point $x_0 \in X$ and choose a sequence
$(g_n)_{n \geq 0}$ of elements of $G$ such that $\lim_n g_n.x_0 \to \xi$. By~\cite[Lem.~4.4]{BallmannBrin} we
have $\lim_n g_n.a_1 = \xi$ or $\lim_n g_n.b_1 = \xi$ (or both). Thus $g'_n = g_n \gamma_1 g_n\inv$ is a
sequence of rank one elements such that the corresponding sequence of attractive or repelling fixed points
converges to $\xi$. Upon multiplying each $g'_n$ by an appropriate power of $\gamma_2$ if necessary, we may
extract a subsequence $(g'_{n_k})$ such that $g'_{n_k}$ is independent from $\gamma_1$ for each $k\geq 0$ and
which still enjoys the same convergence property of its attracting or repelling fixed points. The preceding
paragraph then shows that $\xi$ is dual to both $a_1$ and $b_1$. From a symmetric argument we deduce that $\xi$
is also dual to both $a_2$ and $b_2$.

\smallskip
What we have done so far shows that for any four-tuple of points of $\Lambda$ which are pairwise dual, any other
point of $\Lambda$ is dual to each of them. In view of the hypothesis, this implies that any two points of
$\Lambda$ are dual.
\end{proof}

\subsection{Quasi-morphisms and rigidity}

\begin{lem}\label{lem:Hamenstaedt}
Let $X$ be a proper \cat space and $G < \Isom(X)$ be a closed subgroup with limit set $\Lambda$. For every
hyperbolic isometry $g \in G$ with attracting and repelling fixed points $a \neq b \in \Lambda$, the $G$-orbit
of $(a, b)$ is a closed subset of $\Lambda \times \Lambda$.
\end{lem}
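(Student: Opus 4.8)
The plan is to prove the substantive content of the statement: if $(h_n)_{n\ge1}$ is a sequence in $G$ with $h_n.(a,b)\to(\xi,\eta)$ in $\Lambda\times\Lambda$ and $\xi\neq\eta$, then $(\xi,\eta)=h.(a,b)$ for some $h\in G$ (so that $G\cdot(a,b)$ is closed in $(\Lambda\times\Lambda)\setminus\Delta$). Fix a base point $x_0\in X$, write $\tau:=|g|>0$ for the translation length, pick an axis $\ell$ of $g$ with $\ell(+\infty)=a$, $\ell(-\infty)=b$ and set $o:=\ell(0)$. Put $\xi_n:=h_n.a$, $\eta_n:=h_n.b$, and $\ell_n:=h_n.\ell$, so that $\ell_n$ is an axis of the conjugate $g_n:=h_ngh_n^{-1}$ with endpoints $\xi_n\to\xi$, $\eta_n\to\eta$ and translation length $\tau$. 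The argument splits according to whether the distances $D_n:=d(x_0,\ell_n)$ remain bounded.

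First I would treat the case $\liminf_n D_n<\infty$; passing to a subsequence we may assume $D_n\le R$. Choose $p_n\in\ell_n$ with $d(x_0,p_n)\le R+1$ and write $p_n=h_n.\ell(s_n)$. Setting $k_n:=\lfloor s_n/\tau\rfloor$ and $c_n:=g^{k_n}$, we have $c_n\in G_a\cap G_b$ (since $g$ fixes $a,b$), and since $\ell(k_n\tau)=g^{k_n}.o$ with $\ell$ parametrised by arclength, the point $h_nc_n.o=h_n.\ell(k_n\tau)$ lies within $\tau$ of $p_n$, hence within $R+1+\tau$ of $x_0$. Thus $h_n':=h_nc_n\in G$ satisfies $d(x_0,h_n'.x_0)\le R+1+\tau+d(x_0,o)$ and the same bound for $d(x_0,(h_n')^{-1}.x_0)=d(h_n'.x_0,x_0)$. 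As $X$ is proper, the Arzel\`a--Ascoli theorem lets us pass to a subsequence along which $h_n'$ converges, uniformly on bounded sets, to an isometry $h$ of $X$; since $G$ is closed, $h\in G$. Using continuity of the $\Isom(X)$-action on $\overline X$ and $c_n.a=a$, $c_n.b=b$, we get $h.a=\lim_n h_n'.a=\lim_n\xi_n=\xi$ and likewise $h.b=\eta$, so $(\xi,\eta)=h.(a,b)\in G\cdot(a,b)$.

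It then remains to rule out $D_n\to\infty$, and this is where the rank one hypothesis is used. By \cite[Thm.~5.4]{BestFuji_symsp} the axis $\ell$, hence each $\ell_n=h_n.\ell$, is $B$-contracting for one fixed constant $B$. Let $p_n:=\pi_{\ell_n}(x_0)$, so $d(x_0,p_n)=D_n\to\infty$. A standard property of $B$-contracting (equivalently, uniformly Morse) geodesics yields a constant $B'=B'(B)$ with $d(x_0,y)\ge D_n+d(p_n,y)-B'$ for all $y\in\ell_n$; combined with the triangle inequality this forces the Gromov product $(x_0\mid y)_{p_n}\le\tfrac12B'$, so the geodesic $[x_0,y]$ passes within $O(B')$ of $p_n$. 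Letting $y\to\xi_n$ along $\ell_n$, and using that $[x_0,y]\to[x_0,\xi_n)$ uniformly on bounded sets, we obtain $q_n\in[x_0,\xi_n)$ with $d(q_n,p_n)\le O(B')$, and symmetrically $q_n'\in[x_0,\eta_n)$ with $d(q_n',p_n)\le O(B')$. Passing to a further subsequence, $p_n\to\zeta\in\bd X$, hence $q_n\to\zeta$ and $q_n'\to\zeta$ in $\overline X$. Since $q_n\in[x_0,\xi_n)$ with $d(x_0,q_n)\to\infty$, the segments $[x_0,q_n]$, being initial segments of $[x_0,\xi_n)\to[x_0,\xi)$, converge to $[x_0,\xi)$; but $q_n\to\zeta$ forces them to converge to $[x_0,\zeta)$, so $\xi=\zeta$, and symmetrically $\eta=\zeta$, contradicting $\xi\neq\eta$. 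Hence $D_n$ is bounded and we are back in the first case.

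The soft part — recentring by powers of $g$, then invoking the Arzel\`a--Ascoli theorem and closedness of $G$, then passing to the boundary — is routine. The hard part will be the escaping-axes case: showing that if the axes of the rank one conjugates $g_n$ run off to infinity, their pairs of endpoints must collapse to a single point. This rests on the geometry of $B$-contracting geodesics in \cat spaces (the uniform Morse property and the estimate $d(x_0,y)\ge D_n+d(p_n,y)-O(B)$ for $y$ on the axis), together with elementary facts about the cone topology of a proper \cat space (continuity of the $\Isom(X)$-action on $\overline X$; that a point at bounded distance from a sequence converging to $\zeta\in\bd X$ also converges to $\zeta$; and that two geodesic rays from a common point at bounded Hausdorff distance coincide). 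I expect controlling this case to be the main obstacle in the proof.
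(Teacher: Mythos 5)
Your overall strategy is essentially the argument the paper appeals to: its proof of this lemma is a one-line reference to \cite[Lem.~6.1]{Hamenstadt_IsomHyperbolic}, whose content is exactly your dichotomy --- if the conjugated axes $\ell_n=h_n.\ell$ stay at bounded distance from the basepoint, recentre by powers of $g$ and apply Arzel\`a--Ascoli together with closedness of $G$; otherwise use a thinness property of the axes to force the two limiting endpoints to coincide. In the hyperbolic setting of \emph{loc.~cit.} the second alternative is handled by $\delta$-hyperbolicity, and your substitution of the $B$-contracting property of a rank one axis (via \cite[Thm.~5.4]{BestFuji_symsp}) is the correct adaptation to the \cat setting. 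Your reading of the statement is also the right one: closedness in $\Lambda\times\Lambda\setminus\Delta$ for a \emph{rank one} $g$ is what is true and what the proof of Theorem~\ref{th:proper} uses; for a general hyperbolic isometry the conclusion can fail (e.g.\ for $g=(g_1,t)$ with $t\neq 0$ acting on $\Hyp^2\times\RR$ and $G=\Isom(\Hyp^2)\times\RR$, the orbit of the pair of endpoints accumulates on off-diagonal pairs not in the orbit), so your use of the rank one hypothesis is not optional.

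One step in your escaping-axes case is not valid as written. From $d(x_0,y)\ge D_n+d(p_n,y)-B'$ you bound the Gromov product $(x_0\mid y)_{p_n}$ and conclude that $[x_0,y]$ passes within $O(B')$ of $p_n$. In a \cat space a small Gromov product at $p$ does \emph{not} force the geodesic to pass near $p$: the only general inequality is $(x\mid y)_p\le d(p,[x,y])$, and in $\RR^2$ with $p$ the origin, $x=(-R,h)$, $y=(R,h)$, the Gromov product is about $h^2/2R$ while $d(p,[x,y])=h$. The fact you need --- that for $y\in\ell_n$ with $d(p_n,y)$ larger than a constant depending on $B$, the geodesic $[x_0,y]$ meets an $O(B)$-ball around $p_n=\pi_{\ell_n}(x_0)$ --- is a genuine standard property of $B$-contracting geodesics, but it must be derived directly from the contraction property (subdividing $[x_0,y]$ and using that balls disjoint from $\ell_n$ project to sets of diameter at most $B$), not from the Gromov product. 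With that step repaired, the remainder of your argument (the points $q_n$, $q_n'$ converge to a common $\zeta\in\bd X$ and $\xi=\zeta=\eta$, contradicting $\xi\neq\eta$) is correct.
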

\begin{proof}
Identical to the proof of \cite[Lem.~6.1]{Hamenstadt_IsomHyperbolic}.
\end{proof}

\begin{proof}[Proof of Theorem~\ref{th:proper}]
Assume that $G$ is \emph{non-elementary}, that is to say $G$ does not fix a point at infinity and does not
stabilize any geodesic line. By Proposition~\ref{prop:amenable:independent} it follows that $G$ contains two
independent rank one elements. Therefore the set of pairs of fixed points of rank one elements of $G$ is dense
in $\Lambda \times \Lambda - \Delta$.

Assume now that $\overline G$ does not act transitively on $\Lambda \times \Lambda - \Delta$. In view of
Lemma~\ref{lem:Hamenstaedt}, we deduce from the density assertion of the preceding paragraph that $G$ contains
two rank one elements $g_1, g_2$ with respective attracting and repelling fixed points $(a_1, b_1)$ and $(a_2,
b_2)$, such that the $\overline G$-orbit of $(a_1, b_1)$ is distinct from the $\overline G$-orbit of $(a_2,
b_2)$. It follows that $g_1$ and $g_2$ are $\overline G$-inequivalent.

Notice moreover that $b_1 \neq b_2$. Indeed, since $g^n_1.a_2$ tends to $a_1$ as $n$ tends to infinity, the
equality $b_1 = b_2$ would imply that $(a_2, b_2)$ is in the same $\overline G$-orbit as $(a_1, b_1)$, which is
absurd. Similarly one shows that the four points $a_1, a_2, b_1, b_2$ are pairwise distinct. In particular $g_1$
and $g_2$ are independent.

Therefore, we may apply Proposition~\ref{prop:infiniteHQH}, which shows that $\QH(G)$ and $\QH(\overline G)$ are
infinite-dimensional. Since the Bestvina--Fujiwara construction yields quasi-morphisms with integer values, it
follows from Theorem~\ref{thm:appendix} below that $\QH_c(\overline G)$ is infinite-dimensional as well.

\medskip
The last two assertions of Theorem~\ref{th:proper} follow from Proposition~\ref{prop:amenable:fixator} and
Corollary~\ref{cor:Hamenstaedt:Prop6.4} since the limit set of $G$ coincides with the limit set of its closure
$\overline G$.
\end{proof}

\begin{proof}[Proof of Corollary~\ref{cor:HigherRank}]
Since $\Gamma$ has Kazhdan's property~(T), every finite index subgroup of $\Gamma$ has finite Abelianization. On
the other hand, we know by \cite[Theorems~20 and~21]{BurgerMonod} that $\QH(\Gamma) = 0$.
Theorem~\ref{th:proper} thus implies that $H = \overline{\varphi(\Gamma)}$ is transitive on the pair of distinct
points of its limit set $\Lambda$.

Let $Y \subseteq X$ be a nonempty closed convex $H$-invariant subset; such a subspace exists since $H$ has no
fixed point at infinity in view of Proposition~\ref{prop:amenable:independent}. The version of Monod's
superrigidity theorem given in \cite[Theorem~9.4]{CaMo} (which may be applied since $\Gamma$ has property (T)
and is square-integrable \cite{Shalom2000}) provides a continuous homomorphism $\varphi:G \to \Isom(Y)$
extending the given $\varphi:\Gamma \to \Isom(Y)$.

Notice that since $Y$ admits a rank one isometry by assumption, it is irreducible. Since $\Gamma$ acts
minimally, it follows from \cite[Th.~1.6]{CaMo} that the continuous map $\varphi:G \to \Isom(Y)$ factors through
some simple factor of $G$, say $G_\alpha$.

Given a semisimple element $h \in G_\alpha$ which is not \emph{periodic} (\emph{i.e.} the cyclic subgroup $\la h
\ra$ is not relatively compact), the image $\varphi(h)$ is not an elliptic isometry, since any continuous
nontrivial homomorphism of a simple algebraic group to a locally compact second countable group is proper
\cite[Lemma~5.3]{BM96}. In particular the limit set of $\la \varphi(h) \ra$  in $Y$ is a nonempty subset of
$\Lambda$, thus consisting of~$1$ or~$2$ points.

Let now $P < G_\alpha$ be any proper parabolic subgroup. By \cite[Lemma~2.4]{Prasad77} there exists a
non-periodic semisimple element $h$ such that
$$ P = \centra_{G_{\alpha}}(h) \cdot U(h),$$
where $\centra_{G_{\alpha}}(h)$ is the centraliser of $h$ in
$G_{\alpha}$ and  $U(h) \lhd P$ is the \emph{contraction group} defined by
$$U(h) = \{g \in G_\alpha \; | \; \lim_{n \to \infty } h^n g h^{-n} = 1\}.$$
It follows that the limit point of the sequence $(h^{-n}.y_0)_{n \geq 0}$, where $y_0 \in Y$ is any base point,
is $P$-invariant. We have thus established that any proper parabolic subgroup of $G_\alpha$ fixes some point of
$\Lambda$. In particular the stabilizer of any $\xi \in \Lambda$ in $G_\alpha$ is a parabolic subgroup, since
any subgroup containing a parabolic is itself parabolic. In view of \cite[Lemma~4.4]{BallmannBrin} the pointwise
stabilizer of a triple of points of $\Lambda$ in $\Isom(Y)$ has a fixed point in $Y$ and is thus compact. Since
$\varphi: G_\alpha \to \Isom(Y)$ is proper, it follows that any parabolic subgroup of $G_\alpha$ has at most~$2$
fixed points in $\Lambda$. Thus any minimal parabolic subgroup of $G_\alpha$ is contained in at most two maximal
ones. This is absurd since $G_\alpha$ is simple and has $k_\alpha$-rank~$\geq 2$.
\end{proof}

\begin{remark}
It should be noted that the assumption that the group $G$ has at least two simple factors accounts for the
corresponding assumption in the version of the superrigidity theorem for higher rank lattices that we appeal to.
It is reasonable to conjecture the conclusion of Corollary~\ref{cor:HigherRank} still holds for lattices in
higher rank \emph{simple} groups; in fact, apart from the aforementioned superrigidity, all arguments of the
proof remain valid in that more general context.
\end{remark}


\section{Rank one elements in Coxeter groups}\label{sec:Coxeter}

Let $(W, S)$ be a Coxeter system such that the Coxeter group $W$ is finitely generated; equivalently $S$ is
finite. We denote by $\Sigma$ the associated Davis complex; it is endowed with proper \cat metric and a natural
properly discontinuous cocompact $W$-action by isometries
(See \cite{Davis}. This fact is proved by Moussong
\cite{Moussong}). We view the elements of $W$ as isometries of
$\Sigma$; thus by a \emph{hyperbolic element} of $W$ (resp. rank one, $B$-contracting, etc.) we wean an element
which acts on $\Sigma$ as a hyperbolic (resp. rank one, $B$-contracting, etc.) isometry.

\begin{lem}\label{lem:basic}
Let $\gamma \in W$ and $x, y \in \Sigma$ such that $\gamma.x = y$. Then $\gamma$ belongs to the group $W(x, y)$
generated by all those reflections which fix some point of the geodesic segment $[x, y]$.
\end{lem}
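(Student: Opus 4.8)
The plan is to exhibit $\gamma$ as a product of reflections whose walls are crossed by the geodesic $[x,y]$, by combining the classical description of $W$ as a reflection group acting on the Coxeter complex $\Sigma$ with the convexity of half-spaces in the Moussong \cat metric. We may assume $\gamma \neq 1$. First I would pick a chamber $C$ of $\Sigma$ whose closure contains $x$; since $\gamma$ is an isometry permuting the chambers and $W$ acts simply transitively on them, $\gamma C \neq C$ and $y = \gamma.x$ lies in $\overline{\gamma C}$. Then I would invoke the standard fact (see \cite{Davis}) that $\gamma$ belongs to the subgroup generated by the reflections $r_M$ in the (finitely many) walls $M$ separating $C$ from $\gamma C$ --- these being exactly the walls crossed by any minimal gallery from $C$ to $\gamma C$.

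It then remains to show that each such wall $M$ meets the segment $[x,y]$, for then $r_M$ fixes a point of $[x,y]$, whence $r_M \in W(x,y)$, and we conclude $\gamma \in W(x,y)$. For this, recall that $M = \Fix(r_M)$ is convex (as the fixed-point set of an isometry of a \cat space), that $\Sigma \setminus M$ has exactly two components, and that the two closed half-spaces $H^+$ and $H^-$ bounded by $M$ are convex --- all standard for Coxeter complexes. Labelling so that $\overline C \subseteq H^+$ and $\overline{\gamma C} \subseteq H^-$, we get $x \in H^+$ and $y \in H^-$. Since $H^+$ and $H^-$ are convex and a geodesic of a \cat space meets each convex set in a subsegment, the parameter sets $\{t : [x,y](t) \in H^+\}$ and $\{t : [x,y](t) \in H^-\}$ are closed subintervals of $[0,1]$ containing $0$ and $1$ respectively whose union is all of $[0,1]$; hence they overlap, and any point of the overlap lies in $H^+ \cap H^- = M$. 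So $M \cap [x,y] \neq \emptyset$, as needed.

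I expect the only real content to be imported rather than proved here: the structure theory of the Coxeter complex (simple transitivity of $W$ on chambers, and the expression of $\gamma$ as a product of the reflections in the walls separating $C$ from $\gamma.C$), together with the convexity of the closed half-spaces of a wall in the \cat realization $\Sigma$. The point requiring a little care --- and essentially the only place a careless argument could go wrong --- is that $x$ and $y$ need not lie in the interiors of chambers, so one must work throughout with closed chambers and closed half-spaces rather than open ones; with that proviso the \cat crossing argument is entirely routine.
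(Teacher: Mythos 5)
Your proof is correct and follows essentially the same route as the paper: choose a chamber $C$ with $x\in\overline C$, use the standard fact that $\gamma$ lies in the group generated by the reflections in the walls separating $C$ from $\gamma.C$, and observe that each such wall must meet $[x,y]$. The paper dismisses this last point with ``clearly''; your convexity argument with the two closed half-spaces is a correct way to justify it, including the degenerate case $x=y$.
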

\begin{proof}
Let $C_x$ be a chamber containing $x$ and define $C_y = \gamma.C_x$. It is well known (and easy to see) that
$\gamma$ belongs to the group generated by all those reflections which fix a wall separating $C_x$ from $C_y$.
Clearly every wall separating $C_x$ from $C_y$ meets $[x, y]$; the result follows.
\end{proof}

\subsection{Parabolic closures and essential elements}

Recall that a subgroup of $W$ of the form $W_J$ for some $J \subset S$ is called a \textbf{standard parabolic
subgroup}. Any of its conjugates is called a \textbf{parabolic subgroup} of $W$. A basic fact on Coxeter groups
is that any intersection of parabolic subgroups is itself a parabolic subgroup
(see \cite{Krammer}). This allows one to define the
\textbf{parabolic closure} $\Pc(R)$ of a subset $R \subset W$: it is the smallest parabolic subgroup of $W$
containing $R$.

An element $w \in W$ is called \textbf{standard} if its parabolic closure is a standard parabolic subgroup. It
is called \textbf{cyclically reduced} if $\ell(w) = \min \{ \ell(\gamma w \gamma\inv) \; | \; \gamma \in W\}$.
The following elementary result shows in particular that any cyclically reduced element is standard:
\begin{prop}\label{prop:Pc}
Let $w \in W$. We have the following:
\begin{itemize}
\item[(i)] If $w$ is standard, then for any writing of $w$ as a reduced word $w = s_1 s_2 \cdots s_{\ell(w)}$
with letters in $S$, we have $\Pc(w) = \la s_1, s_2, \dots, s_{\ell(w)} \ra$.

\item[(ii)] Let $x \in W$ be such that $x \Pc(w) x\inv$ is standard. Assume moreover that
$$\ell(x) = \min\{\ell(\gamma) \; | \; \gamma \in W, \ \gamma \Pc(w)\gamma\inv \text{ is standard}\}.$$
If $\gamma \neq 1$, then $\ell(x w x\inv) < \ell(w)$.

\item[(iii)] Let $s \in S$. If $w$ is standard, then either $\Pc(ws) \subset \Pc(w)$ or $\Pc(ws) = \la \Pc(w)
\cup \{s \} \ra$ and $ws$ is standard as well.
\end{itemize}
\end{prop}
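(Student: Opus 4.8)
The plan is to treat the three assertions in order, leaning on the combinatorial description of walls and reduced words in Coxeter groups. For (i), suppose $w$ is standard, say $\Pc(w) = W_J$ with $J \subseteq S$. First I would observe that any reduced expression $w = s_1 \cdots s_{\ell(w)}$ uses only letters from $J$: indeed $\la s_1, \ldots, s_{\ell(w)}\ra$ is a standard parabolic containing $w$, so it contains $\Pc(w) = W_J$, giving $J \subseteq \{s_1,\ldots,s_{\ell(w)}\}$; conversely each $s_i$ is a reflection through a wall separating the fundamental chamber $C$ from $wC$, and since $w \in W_J$ every such wall is a wall of the residue corresponding to $J$, whence $s_i \in W_J$. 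Combining the two inclusions gives $\la s_1,\ldots,s_{\ell(w)}\ra = W_J = \Pc(w)$. (This is the standard fact that the support of a reduced word is an invariant of the element, combined with the observation that for a standard element the support equals the type of the parabolic closure.)

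For (iii), assume $w$ standard with $\Pc(w) = W_J$, and let $s \in S$. By definition $\Pc(ws) = \la W_J \cup \{s\}\ra \cap (\text{all parabolics containing } ws)$; more concretely I would argue that $\Pc(ws) \subseteq \la W_J \cup \{s\}\ra =: W_{J'}$ where $J' = J \cup \{s\}$, since the latter is a standard parabolic containing $ws$. Now two cases: if $s \in J$ then $ws \in W_J$ and $\Pc(ws) \subseteq W_J = \Pc(w)$, giving the first alternative. If $s \notin J$, then a reduced word for $ws$ is obtained from a reduced word for $w$ by appending $s$ (here I'd use that $\ell(ws) = \ell(w) + 1$ because $s \notin J$ forces $w^{-1}\alpha_s > 0$ in root-system terms, or directly: $w$ fixes no wall through $s$ other than... — in any event appending a generator not in the support increases length). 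Then $ws$ is a standard element whose reduced word has support $J' = J \cup \{s\}$, so by part (i) $\Pc(ws) = W_{J'} = \la \Pc(w) \cup \{s\}\ra$, which is the second alternative.

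For (ii), let $x$ be of minimal length among $\gamma$ with $\gamma\Pc(w)\gamma^{-1}$ standard, and write $P = \Pc(w)$, so $xPx^{-1} = W_K$ for some $K \subseteq S$. I would take a reduced expression $x = t_1 \cdots t_k$ (with $k = \ell(x)$) and suppose $k \geq 1$; the goal is $\ell(xwx^{-1}) < \ell(w)$. The key point is minimality of $\ell(x)$: it forces $x$ to be the minimal-length representative of the coset $x W_J$, where $W_J = P$ — for if some $t_i$ could be absorbed into $P$ on the right we could shorten $x$ while keeping the conjugate parabolic standard (conjugation by $x$ and by $xp$, $p \in P$, give the same parabolic $xPx^{-1}$). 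Thus $\ell(xp) = \ell(x) + \ell(p)$ for all $p \in P$, i.e. $x$ is $(J)$-reduced. Dually, minimality also forces $x$ to be $K$-reduced on the left: $\ell(w_K x) = \ell(w_K) + \ell(x)$ — otherwise, replacing $x$ by some $sx$ with $s \in K$ and $\ell(sx) < \ell(x)$ would give $(sx)P(sx)^{-1} = sW_Ks^{-1} = W_K$ still standard but with shorter conjugator. With $x$ simultaneously $J$-reduced on the right and $K$-reduced on the left, the standard deletion/length estimates for double cosets give $\ell(xwx^{-1}) = \ell(x) + \ell(w) - \ell(x) \cdot(\text{something}) $; more carefully, writing $w \in W_J$ and using $\ell(xw) = \ell(x)+\ell(w)$ (since $x$ is $J$-reduced on the right) together with $\ell(xwx^{-1}) = \ell(xw \cdot x^{-1})$ and the fact that $xw$ is not $J$-reduced on the right when $\ell(x) \geq 1$ (because $xw$ lies in $xW_J$ whose minimal representative is $x$, but $xw \neq x$ unless $w = 1$), one gets $\ell(xwx^{-1}) \leq \ell(x) + \ell(w) - 2 < \ell(w)$ whenever $\ell(x) \geq 1$ — wait, that needs $\ell(x) \geq 1$ to produce the strict drop, which is exactly the hypothesis $\gamma \neq 1$ (read $x \neq 1$).

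\smallskip

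The main obstacle I anticipate is part (ii): getting the clean length identity $\ell(xwx^{-1}) < \ell(w)$ requires knowing that $x$ is simultaneously reduced with respect to $P = W_J$ on one side and $xPx^{-1} = W_K$ on the other, and then running a careful double-coset length computation. The subtlety is that shortening $x$ a priori destroys the standardness of the conjugate parabolic; the resolution is precisely the observation that $xPx^{-1}$ depends only on the coset $xW_J$ (for the right reduction) and that left-multiplying $x$ by an element of $W_K$ changes the conjugator but not the conjugate (for the left reduction). Once both reductions are in place, the length estimate is the routine Coxeter-group computation $\ell(x w x^{-1}) = \ell(x) + \ell(w) - \ell(x)$ — no, it is not that trivial; the honest version is: $xwx^{-1}$ and $w$ are conjugate, $\ell(xw) = \ell(x) + \ell(w)$, and every reduced word for $xwx^{-1}$ that starts by spelling out $x$ then $w$ must cancel, so at least $2\ell(x) \geq 2$ letters disappear, forcing $\ell(xwx^{-1}) \leq \ell(w) + \ell(x) - \ell(x) \cdot \min(\cdot)$; I will nail this down using the exchange condition rather than hand-wave it. Everything else — (i) and (iii) — follows from the invariance of the support of a reduced expression and the behaviour of length under appending a generator, both of which are standard.
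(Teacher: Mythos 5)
Part (i) is fine (it is the standard support-invariance argument the paper also alludes to), but both (ii) and (iii) have genuine gaps. In (iii), your case $s \notin J$ (where $\Pc(w) = W_J$) begs the question: knowing that a reduced word for $ws$ has support $J \cup \{s\}$ only identifies the smallest \emph{standard} parabolic containing $ws$; it does not show that $\Pc(ws)$ --- the smallest parabolic over \emph{all} parabolics, standard or not --- equals $W_{J\cup\{s\}}$. A priori $ws$ could lie in a proper non-standard parabolic of $W_{J\cup\{s\}}$ of smaller rank (compare the reflection $s_1s_2s_1$ in type $A_2$, which has full support but parabolic closure of rank one), and part (i) cannot be invoked until standardness of $ws$ is established --- which is precisely the content of the assertion. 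The paper closes this by distinguishing whether $wsw\inv \in \Pc(ws)$, and in the negative case running a combinatorial-projection (gate) argument onto a residue stabilized by $\Pc(ws)$ to force $\Pc(ws) \subseteq \Pc(w)$.

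In (ii) the length computation rests on treating $P = \Pc(w)$ as a standard parabolic $W_J$, but the whole point of (ii) is that $P$ is \emph{not} standard (otherwise $x=1$ and there is nothing to prove). For a non-standard parabolic, minimality of $x$ in the sense $\ell(xp) \geq \ell(x)$ for all $p \in P$ does not yield $\ell(xp) = \ell(x) + \ell(p)$: in type $A_2$ take $P = \la s_1s_2s_1 \ra$, $w = s_1s_2s_1$, $x = s_1$; then $\ell(xw) = \ell(s_2s_1) = 2 \neq \ell(x)+\ell(w) = 4$. So your premise $\ell(xw) = \ell(x) + \ell(w)$ fails. Worse, even granting the estimate $\ell(xwx\inv) \leq \ell(x) + \ell(w) - 2$, this is $< \ell(w)$ only when $\ell(x) = 1$, so the argument cannot conclude for longer conjugators. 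The paper's proof instead identifies the minimal conjugator with the combinatorial projection (gate) of $1$ onto the residue $R$ stabilized by $P$, uses the gate property to show that every wall separating $\proj_R(1)$ from its image under $w$ also separates $1$ from $w$ --- whence $\ell(xwx\inv) \leq \ell(w)$ --- and then shows that equality forces the two wall sets to coincide, which puts enough reflections into $P$ to make it standard, i.e.\ forces $x = 1$. Some version of this gate/wall-counting mechanism is needed; the double-coset formalism for standard parabolics does not apply here.
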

\begin{proof}
(i). This is well known; it is an immediate consequence of the solution to the word problem
\cite{Tits_CoxWordPb}.

\medskip \noindent (ii). %
Set $P=\Pc(w)$.
Let $J \subset S$ be such that $xPx\inv = W_J$ and set $R = x\inv W_J$. We view (the $1$-skeleton of) $\Sigma$
as a chamber system (see \cite{Weiss_book}); the chambers are the elements of $W$. The coset $R$ is the
$J$-residue of $\Sigma$ containing $x\inv$. Note that $\Stab_W(R) = P$. The condition that $x$ minimizes the
length of all elements which conjugate $P$ to a standard parabolic subgroup means precisely that $x$ is the
combinatorial projection of $1$ onto $R$ (see \cite[Th.~3.22]{Weiss_book} for the notion of projections onto
residues). Thus $x = \proj_R(1)$. Let also $y = proj_R(w)$. Since $R$ is $P$-invariant we have $w.x = y$. Thus
$d(x, y)=\ell(x w x\inv)$. Furthermore, in view of basic properties of the combinatorial projection
\cite[Th.~3.22]{Weiss_book}, every wall which separates $x$ from $y$ also separates $1$ from $w$. This implies
that $d(x, y) \leq d(1, w) = \ell(w)$. Therefore, if $\ell(x w x\inv) \geq \ell(w)$, then we deduce $\ell(x w
x\inv) = \ell(w)$ and the set $\mathscr{M}(x, y)$ of walls separating $x$ from $y$ coincides with the set
$\mathscr{M}(1, w)$. We have to show that $x = 1$.

Let $s_1 \cdots s_{\ell(w)}$ be a reduced word representing $w$. Notice that the reflections associated to walls
in $\mathscr{M}(1, w)$ are precisely
$$s_1, \; s_1 s_2 s_1, \; \dots \; , \; s_1 \cdots s_{\ell(w)} \cdots s_1$$
since $w = s_1 s_2 \dots s_{\ell(w)}$. By the above, each of these reflections stabilizes $R$ (see
\cite[Prop.~4.10]{Weiss_book}) and, hence, belongs to $P$. This shows that $P \supset \la s_1, s_2, \dots,
s_{\ell(w)} \ra$. Since the group generated by $ s_1, s_2, \dots, s_{\ell(w)}$ is clearly a parabolic subgroup
which contains $w$, we deduce $P = \Pc(w) = \la s_1, s_2, \dots, s_{\ell(w)} \ra$. In particular $P$ is standard
and hence $x$ must be trivial, as desired.

\medskip \noindent (iii). %
By assumption $w$ is standard. This implies that $Q := \la \Pc(w) \cup \{s\} \ra$ is a standard parabolic
subgroup.  Since $w$ and $s$ both belong to $Q$, it follows that $\Pc(ws) \subset Q$. If $s \in \Pc(w)$, then $Q
= \Pc(w)$ and, hence $\Pc(ws) \subset \Pc(w)$. We assume henceforth that $s \not \in \Pc(w)$. In particular
$\Pc(w)$ is properly contained in $Q$; more precisely the ranks of $\Pc(w)$ and $Q$ differ by $1$.

If $wsw\inv \in \Pc(ws)$, then $w = wsw\inv.ws$ belongs to $\Pc(ws)$ and hence $Pc(w) \subset \Pc(ws)$. Since
$s$ does not belong to $\Pc(w)$, we obtain
$$\Pc(w) \subsetneq \Pc(ws) \subset Q,$$
which implies that $\Pc(ws) = Q$ since these parabolic subgroups have the same rank. In particular $ws$ is
standard.

Assume now that $wsw\inv \not \in \Pc(ws)$. Set $P' = \Pc(ws)$ and choose a residue $R'$ whose stabilizer in $W$
is $P'$, in a similar way as in the proof of (ii). The condition that $wsw\inv$ does not stabilize $R'$ means
that the projections $\proj_{R'}(w)$ and $\proj_{R'}(ws)$ must coincide. Arguing as in the proof of (ii), we
deduce that every walls separating $\proj_{R'}(1)$ from $\proj_{R'}(ws)$ also separates $1$ from $w$. Since $w$
is standard, this implies in view of (i) that $\Pc(ws)$ is contained in $\Pc(w)$, as desired.
\end{proof}

An element $\gamma \in W$ is called \textbf{essential} if $\Pc(\gamma) = W$. The following result appears in
\cite[Thm.~3.4]{Paris_irrCox}; we give an alternative argument:

\begin{cor}\label{cor:EssentialElements}
Let $s_1, \dots, s_n$ be all the elements of $S$ (in any order). Then $w = s_1 \cdots s_n$ is essential.
\end{cor}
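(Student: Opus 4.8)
The plan is to show that $\Pc(w) = W$ by exhibiting $S$ inside $\Pc(w)$, using part~(iii) of Proposition~\ref{prop:Pc} as the engine. First I would observe that $w = s_1 \cdots s_n$ is automatically written as a reduced word: the $s_i$ are pairwise distinct, so no Coxeter relation of the form $s s = 1$ can shorten it (indeed $\ell(w) = n$ since every prefix ends in a new generator, so the word is reduced by the exchange condition, or more simply this word meets no reduced word containing a repeated adjacent letter). Consequently $w_0 := s_1$ is standard with $\Pc(s_1) = \la s_1 \ra$, and I would build up $\Pc(w)$ by processing the letters one at a time: set $w_k = s_1 \cdots s_k$ and prove by induction that each $w_k$ is standard and $\Pc(w_k) \supseteq \la s_1, \dots, s_k \ra$.

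For the inductive step, assume $w_{k}$ is standard. Apply Proposition~\ref{prop:Pc}(iii) with $w = w_k$ and $s = s_{k+1}$: either $\Pc(w_k s_{k+1}) = \Pc(w_{k+1}) \subseteq \Pc(w_k)$, or $\Pc(w_{k+1}) = \la \Pc(w_k) \cup \{s_{k+1}\}\ra$ and $w_{k+1}$ is standard as well. The first alternative I would rule out: since $w_{k+1}$ is itself cyclically reduced-or-at-least standard along this chain — more precisely, since $w_{k+1} = s_1 \cdots s_{k+1}$ is a reduced word, part~(i) of Proposition~\ref{prop:Pc} (once we know $w_{k+1}$ is standard) would force $\Pc(w_{k+1}) = \la s_1, \dots, s_{k+1}\ra$; but in the first alternative $s_{k+1} \in \Pc(w_{k+1}) \subseteq \Pc(w_k) = \la s_1, \dots, s_k\ra$ (by induction and~(i)), and I claim this is impossible. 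The cleanest way to see the impossibility: $\la s_1, \dots, s_k \ra = W_{\{s_1,\dots,s_k\}}$ is a standard parabolic subgroup of $W$ whose intersection with $S$ is exactly $\{s_1, \dots, s_k\}$ (a standard fact about Coxeter systems), so it cannot contain the new generator $s_{k+1}$. Hence the second alternative of~(iii) must hold, giving $w_{k+1}$ standard and $\Pc(w_{k+1}) = \la s_1, \dots, s_{k+1}\ra$.

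Carrying the induction through to $k = n$ yields that $w = w_n$ is standard with $\Pc(w) = \la s_1, \dots, s_n \ra = \la S \ra = W$, which is precisely the assertion that $w$ is essential. The main obstacle is making the dichotomy in Proposition~\ref{prop:Pc}(iii) collapse correctly at each step, i.e. verifying that the "shrinking" alternative $\Pc(ws) \subseteq \Pc(w)$ genuinely cannot occur when $s$ is a fresh generator; this rests on the elementary but essential fact that a standard parabolic $W_J$ meets $S$ in exactly $J$, together with the inductive identification of $\Pc(w_k)$ as the standard parabolic on the first $k$ generators. Everything else is bookkeeping, and the argument gives the stronger statement that $w$ is standard with explicitly known parabolic closure, not merely essential.
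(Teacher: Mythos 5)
Your proof is correct and follows essentially the same route as the paper's, which likewise establishes $\Pc(w_k)=\la s_1,\dots,s_k\ra$ by induction on $k$ using Proposition~\ref{prop:Pc}. One cosmetic repair: to rule out the shrinking alternative you should not invoke part~(i) for $w_{k+1}$ before knowing it is standard; instead note that $\Pc(w_{k+1})\subseteq\Pc(w_k)$ would give $s_{k+1}=w_k\inv w_{k+1}\in\Pc(w_k)=\la s_1,\dots,s_k\ra$, which is impossible since a standard parabolic $W_J$ meets $S$ in exactly $J$.
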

\begin{proof}
For each $k = 1, \dots, n$, let $w_k = s_1 \cdots s_k$. An immediate induction using Proposition~\ref{prop:Pc}
shows that $\Pc(w_k) = \la s_1, \dots, s_k \ra$.
\end{proof}

\subsection{Walls separating a flat half-space}

Euclidean flats in $\Sigma$ have been studied in \cite{CH06}. The following parallels some results from
\emph{loc. cit.} in the case of flat half-spaces:

\begin{prop}\label{prop:FlatHalfSpace}
Let $H$ be a flat half-space in $\Sigma$ and denote by $P$ the parabolic closure of the set of reflections
fixing some point of $H$. Then:
\begin{itemize}
\item[(i)] We have
$$P \cong K \times P_1 \times \dots \times P_k,$$
where each $P_i$ is an infinite parabolic subgroup and $K$ is a finite parabolic subgroup containing all
reflections which fix $H$ pointwise. Moreover, if none of the $P_i$'s is of affine type and rank~$\geq 3$, then
$k \geq \dim F$.

\item[(ii)] The parabolic subgroup $P$ contains every element $\gamma \in W$ which maps some point of $H$ into
$H$.
\end{itemize}
\end{prop}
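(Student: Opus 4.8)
The plan is to analyze the structure of $P$ via the reflections it contains and their walls. First I would set up notation: let $H$ be the flat half-space, let $\partial H$ be its bounding flat (a Euclidean flat of dimension $\dim H - 1$, where I write $F$ for the ambient flat $\partial H$ spans, consistent with the statement's "$\dim F$"), and let $\mathscr{M}$ be the set of walls of $\Sigma$ meeting $H$. Each such wall $m$ gives a reflection $r_m$; the group $P$ is by definition the parabolic closure $\Pc(\{r_m : m \in \mathscr{M}\})$. The key geometric observation is that a wall meeting the flat half-space $H$ either crosses $\partial H$ or is "parallel" to it (meets $H$ but misses a neighbourhood of $\partial H$ in $H$, or contains a translate of $\partial H$); following \cite{CH06}, walls crossing a flat fall into finitely many parallelism classes, and the reflections in walls of a common parallelism class, together with those fixing $F$ pointwise, generate the parabolic pieces. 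Concretely, I would invoke the structure theory for Euclidean flats in Davis complexes from \emph{loc.~cit.}: the stabilizer-type parabolic $P$ decomposes as a direct product because walls in distinct "directions" relative to $F$ yield commuting reflection subgroups (two reflections whose walls are disjoint and whose product has infinite order, arranged transversally, generate a direct product of the corresponding parabolic closures — this is where the reducibility of $P$ comes from).

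For part (i): I would argue that $P$ is reducible as a Coxeter group, writing $P = K \times P_1 \times \dots \times P_k$ with $K$ the (finite) parabolic generated by reflections fixing $F$ pointwise and each $P_i$ an irreducible infinite parabolic factor. That $K$ is finite follows because a parabolic subgroup all of whose generating reflections fix a common point (any point of $F$) and whose walls all pass through $F$ is finite — infinite irreducible parabolics act with unbounded orbits. The factors $P_i$ are infinite by construction (we only collect the infinite irreducible pieces into the $P_i$ and absorb finite ones into $K$). The lower bound $k \geq \dim F$ under the hypothesis that no $P_i$ is affine of rank $\geq 3$: the flat $F$ sits inside $\Min$ of $P$, and a product of $k$ infinite non-affine (hence each either hyperbolic-type or of bounded flat rank) irreducible Coxeter factors has maximal flat rank exactly $k$ when none of the factors individually contributes a flat of dimension $\geq 2$ (a rank-$\geq 3$ affine factor would contribute a flat of dimension $\geq 2$, which is exactly the excluded case); since $F$ is a flat of dimension $\dim F$ inside the $P$-minimal set, which is contained in the product of the minimal sets of the factors, we get $\dim F \leq k$. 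I would cite the flat-rank computations of \cite{CH06} for the precise bookkeeping here.

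For part (ii): Let $\gamma \in W$ with $\gamma(z) \in H$ for some $z \in H$. By Lemma~\ref{lem:basic}, $\gamma \in W(z, \gamma z)$, the group generated by reflections fixing a point of the segment $[z, \gamma z]$. The segment $[z, \gamma z]$ lies in $H$ (a half-space is convex), so every wall fixing a point of $[z, \gamma z]$ meets $H$, hence the corresponding reflection lies in the generating set of $P$; therefore $\gamma \in W(z,\gamma z) \leq P$. This is the short half of the proof.

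The main obstacle I expect is part (i), specifically pinning down the direct-product decomposition and the inequality $k \geq \dim F$ rigorously rather than by analogy: one must show that the reflections coming from walls meeting $H$ do not "interact" across the distinct asymptotic directions of $F$ — i.e. that $\Pc$ of the full reflection set really splits as the product of the $\Pc$'s of the direction-classes, with no extra cross terms — and that excluding high-rank affine factors is exactly what forces each irreducible infinite factor to carry a flat of dimension $\leq 1$ in its minimal set, so that accommodating a $\dim F$-dimensional flat requires at least $\dim F$ factors. Handling affine factors of rank $2$ (infinite dihedral-by-translation, i.e. $\widetilde A_1$) correctly is a delicate boundary case, since those are infinite but one-dimensional in flat rank, and must be allowed among the $P_i$.
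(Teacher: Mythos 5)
Your part (ii) is correct and is exactly the paper's argument: the segment $[z,\gamma z]$ lies in the convex set $H$, so Lemma~\ref{lem:basic} puts $\gamma$ in the group generated by reflections fixing points of $H$, which sits inside the parabolic closure $P$. For part (i) your overall strategy (adapt the wall/parallelism-class analysis of \cite{CH06}) is also the paper's route -- the paper gives no more detail than ``a straightforward adaptation of the arguments from \cite{CH06}'' -- so deferring the product decomposition to that reference is acceptable.

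There is, however, a genuine problem with how you set up the inequality $k\geq\dim F$. You declare $F$ to be the flat spanned by the bounding hyperplane $\partial H$, which has dimension $\dim H-1$; but the symbol $F$ in the statement must denote the flat of which $H$ is a half, so $\dim F=\dim H$. This is not cosmetic: the only use of the inequality is in Proposition~\ref{prop:Coxeter:charactRankOne}, where $H$ is a flat half-\emph{plane} and one needs $k\geq 2$ to produce a splitting $P_1\times P_2$ with \emph{both} factors infinite; your reading would only yield $k\geq 1$, which gives no obstruction at all. Moreover, your mechanism for the bound -- ``the flat $F$ sits inside $\Min$ of $P$, hence $\dim F\leq$ flat rank of the product'' -- is not available as stated, because a flat half-space in $\Sigma$ need not extend to a full flat of the same dimension in $\Sigma$; this is precisely why the proposition is about half-spaces and why the adaptation of \cite{CH06} is not entirely formal. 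The correct bookkeeping must extract $\dim H-1$ factors from the genuine flats parallel to $\partial H$ contained in the interior of $H$, and then account separately for the direction transverse to $\partial H$ (using that walls crossing $H$ in that direction generate an infinite factor independent of the others). As written, your argument proves a weaker inequality than the one the paper needs.
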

\begin{proof}
Part (i) follows from a straightforward adaptation of the arguments from \cite{CH06} (see also
\cite[Prop.~3.1]{Ca07}).

Part (ii) follows from (i) and Lemma~\ref{lem:basic}.
\end{proof}

\subsection{Irreversible rank one elements of Coxeter groups}

There are obvious obstructions for a given element $\gamma \in W$ to have rank one: namely, if $\gamma$ is
contained in a parabolic subgroup $P < W$ which is of spherical or affine type, or which splits as a direct
product of two infinite subgroups, then clearly $\gamma$ cannot have rank one. The following shows that these
are in fact the only obstructions:

\begin{prop}\label{prop:Coxeter:charactRankOne}
An element $\gamma \in W$ does not have rank one if and only if $\gamma$ is contained in a parabolic subgroup $P
< W$ such that either $P$ is finite, or $P$ splits as $P = P_1 \times P_2$ where $P_1$ and $P_2$ are both
infinite parabolic subgroups, or $P$ splits as $P = K \times P_{\mathrm{aff}}$ where $K$ is a finite parabolic
subgroup and $P_{\mathrm{aff}}$ is an affine parabolic of rank $\geq 3$.
\end{prop}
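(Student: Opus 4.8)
The "if" direction is exactly the list of obstructions observed just before the statement, so the substance is the converse: assuming $\gamma$ lies in no such "bad" parabolic, I must produce a rank one axis. The plan is to argue contrapositively: suppose $\gamma$ does not have rank one, i.e.\ some (hence any) axis $L$ of $\gamma$ bounds a flat half-plane $H$. I would then feed this half-plane into Proposition~\ref{prop:FlatHalfSpace}. Let $P$ be the parabolic closure of the set of reflections fixing a point of $H$. Part~(ii) of that proposition tells me $\gamma \in P$, since $\gamma$ translates along $L \subset H$ and thus maps points of $H$ into $H$. Part~(i) gives the splitting $P \cong K \times P_1 \times \dots \times P_k$ with $K$ finite and each $P_i$ infinite, and moreover $k \geq \dim H = 2$ (the flat half-plane is $2$-dimensional) \emph{unless} some $P_i$ is affine of rank $\geq 3$. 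This dichotomy is precisely what produces the three cases in the statement: if $P$ is finite we are in the first case; if $k \geq 2$ we may group the factors as $P_1 \times (K \times P_2 \times \dots \times P_k)$, a product of two infinite parabolics, giving the second case; and if instead some $P_i$, say $P_1$, is affine of rank $\geq 3$, then $P = P_1 \times (K \times P_2 \times \dots \times P_k)$; if $k \geq 2$ we are again in the product case, while if $k = 1$ we get $P = K' \times P_{\mathrm{aff}}$ with $K' = K$ finite (absorbing nothing) — this is the third case. Some care is needed that when $k=1$ and $P_1$ is affine we genuinely land in case three rather than leaving $\gamma$ in a non-product affine parabolic that happens to have rank $2$; but a rank $2$ affine parabolic is itself virtually $\ZZ$ and acts on a Euclidean line, which cannot contain a flat \emph{half-plane} bounding an axis, so that degenerate situation does not arise from a flat half-plane in the first place.

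For the "only if" direction I would spell out why each listed configuration kills rank one, if only briefly: if $\gamma \in P$ with $P$ finite then $\gamma$ is elliptic, not even hyperbolic; if $P = P_1 \times P_2$ with both factors infinite, then the minimal set / parallel set of $\gamma$ inside $\Sigma$ contains a product of two unbounded convex sets — concretely, an axis of $\gamma$ together with an unbounded geodesic in the complementary factor spans a flat half-plane (indeed a flat strip in each direction), so the axis bounds a flat half-plane; and if $P = K \times P_{\mathrm{aff}}$ with $P_{\mathrm{aff}}$ affine of rank $\geq 3$, then $P_{\mathrm{aff}}$ acts cocompactly on a Euclidean space $\EE^n$ with $n \geq 2$, and any axis of $\gamma$ inside this flat is contained in a $2$-flat, hence bounds a flat half-plane. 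These are standard facts about parabolic subgroups of Coxeter groups acting on the Davis complex and about the structure of the min-set of a hyperbolic isometry in a \cat space (the Flat Strip theorem and its products-decomposition corollary), so I would cite them rather than reprove them.

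The main obstacle I anticipate is bookkeeping in the converse direction: ensuring that the splitting coming from Proposition~\ref{prop:FlatHalfSpace}(i) is organized so that exactly one of the three named forms is reached, and in particular handling the interaction between the finite factor $K$ and the affine factors correctly. The key point to get right is that the bound $k \geq \dim F$ is conditional on no $P_i$ being affine of rank $\geq 3$; once that conditional is unpacked, every branch of the case analysis terminates in one of the three configurations. A secondary subtlety is the passage from "$\gamma$ has an axis bounding a flat half-plane" to "$\gamma$ maps a point of $H$ into $H$" so that \ref{prop:FlatHalfSpace}(ii) applies — this is immediate because $\gamma$ acts by translation along the boundary line of $H$ and preserves $H$ itself (the flat half-plane through the axis is unique, or at least $\gamma$-invariantly chosen), but it should be stated. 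Beyond that the argument is a direct assembly of the two preceding propositions.
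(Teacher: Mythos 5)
Your proposal follows the paper's proof exactly: the ``if'' direction is dismissed as clear, and the converse is obtained by feeding a flat half-plane bounding an axis into Proposition~\ref{prop:FlatHalfSpace}, parts (ii) and (i), and sorting the resulting splitting into the three cases (the paper does this sorting implicitly, so your extra bookkeeping is welcome but not a different route). The one point you skip is that ``$\gamma$ does not have rank one'' also covers non-hyperbolic $\gamma$, for which there is no axis to work with; the paper disposes of this first by noting that a non-hyperbolic element of $W$ has finite order and hence lies in a finite parabolic subgroup, landing in the first case of the statement --- you should add that sentence before assuming $\gamma$ is hyperbolic. (Also note that in your second paragraph you label as ``only if'' what is in fact the ``if'' direction of the statement; this is purely a naming slip.)
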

\begin{proof}
The `if' part is clear; we focus on the `only if' part. Let thus $\gamma \in W$ be an element which does not
have rank one. If $\gamma$ is not hyperbolic, then $\gamma$ is of finite order and hence contained in a finite
parabolic subgroup as is well known. We may therefore assume that $\gamma$ is hyperbolic and the desired
assertion is provided by Proposition~\ref{prop:FlatHalfSpace}.
\end{proof}

\begin{cor}\label{cor:FullFlat}
Let $\gamma \in W$ be an element of infinite order and $L \subset \Sigma$ be an axis of $\gamma$. Then:
\begin{itemize}
\item[(i)] Either $\gamma$ is rank one or there exists $\gamma' \in W$ such that $\la \gamma, \gamma' \ra \cong
\ZZ \times \ZZ$.

\item[(ii)] $\gamma$ is rank one if and only if its centralizer is virtually cyclic.

\item[(iii)] $L$ is rank one if and only if it is not contained in a periodic $2$-flat.
\end{itemize}

\end{cor}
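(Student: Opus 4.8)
The plan is to deduce all three statements of Corollary~\ref{cor:FullFlat} from Proposition~\ref{prop:Coxeter:charactRankOne} together with the structural facts recalled earlier about parabolic subgroups and the Davis complex. For part~(i), suppose $\gamma$ is not rank one. By Proposition~\ref{prop:Coxeter:charactRankOne}, since $\gamma$ has infinite order it is contained in a parabolic subgroup $P$ which either splits as $P_1\times P_2$ with both factors infinite, or splits as $K\times P_{\mathrm{aff}}$ with $K$ finite and $P_{\mathrm{aff}}$ affine of rank~$\geq 3$. In the first case, write $\gamma=(\gamma_1,\gamma_2)$; if both coordinates have infinite order, then $\gamma_1$ and $\gamma_2$ generate commuting infinite cyclic groups and $\gamma':=\gamma_1\gamma_2^{-1}$ (or a suitable power) together with $\gamma$ generates a $\ZZ\times\ZZ$; if one coordinate is finite-order, replace $\gamma$ by a power so that the finite coordinate is trivial, reducing to the situation where $\gamma$ lies in a single infinite factor, and iterate. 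In the affine case, an infinite-order element of an irreducible affine Coxeter group of rank~$\geq 3$ is a translation in the associated Euclidean flat of dimension~$\geq 2$, so one finds another translation $\gamma'$ in a linearly independent direction with $\la\gamma,\gamma'\ra\cong\ZZ\times\ZZ$. The care needed here is to handle the possibility that $\gamma$ has a nontrivial finite-order component, which is why one passes to powers; I would phrase this by first reducing to the case that $\Pc(\gamma)$ has trivial finite direct factor, so that $\Pc(\gamma)$ is a product of infinite irreducible pieces, each either hyperbolic-type or affine-type.

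For part~(ii), the `only if' direction: if $\gamma$ is rank one, then by \cite[Lem.~3.3.3]{BallmannLN} it acts on $\bd\Sigma$ with North-South dynamics fixing exactly the two endpoints $\gamma^\pm$ of its axis; the centralizer $Z_W(\gamma)$ preserves $\{\gamma^+,\gamma^-\}$, hence has an index-at-most-$2$ subgroup fixing both endpoints and therefore stabilizing the axis $L$ (more precisely the parallel set of $L$, which by rank one is a bounded neighbourhood of $L$ — this uses the contracting property, cf.\ the discussion preceding Theorem~\ref{thm:CharRankOne} and the argument in Lemma~\ref{lem:irreversible}(iii)). Since the $W$-action on $\Sigma$ is properly discontinuous, the stabilizer of a bounded neighbourhood of a line is virtually cyclic, so $Z_W(\gamma)$ is virtually cyclic. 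Conversely, if $\gamma$ does not have rank one, part~(i) produces $\gamma'$ with $\la\gamma,\gamma'\ra\cong\ZZ^2$; since $\gamma'$ commutes with $\gamma$, the centralizer contains $\ZZ^2$ and is not virtually cyclic.

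For part~(iii), the statement for the axis $L$ rather than the element $\gamma$ follows since the element $\gamma$ is rank one if and only if one (hence any) of its axes is rank one, by definition; and a geodesic line $L$ which is an axis of the infinite-order element $\gamma$ fails to be rank one exactly when it bounds a flat half-plane, which (by the local finiteness and cocompactness of the $W$-action on $\Sigma$, together with the fact that $\gamma$ translates along $L$) forces that half-plane to be contained in, and then the flat strip around $L$ to extend to, a $\gamma$-periodic $2$-flat; here I would invoke Proposition~\ref{prop:FlatHalfSpace} applied to a flat half-space along $L$ to see that the parabolic closure $P$ splits off at least a $\ZZ^2$ acting cocompactly on a $2$-flat through $L$, and conversely a periodic $2$-flat containing $L$ obviously exhibits a flat half-plane bounding $L$. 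The main obstacle, as above, is the bookkeeping with finite-order direct factors of the parabolic closure and making precise the passage from ``bounds a flat half-plane'' to ``lies in a periodic $2$-flat''; the latter is exactly where one needs that the ambient space is the Davis complex with its cocompact $W$-action, so that flats through the axis of a hyperbolic element are themselves periodic.
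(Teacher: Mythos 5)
Your overall route is the same as the paper's: part (i) is deduced from Proposition~\ref{prop:Coxeter:charactRankOne}, part (ii) from (i) together with the standard fact that a rank one element has virtually cyclic centralizer (your North--South dynamics plus properness argument is exactly the ``easy fact'' the paper invokes), and part (iii) from (i) via the flat torus theorem / Proposition~\ref{prop:FlatHalfSpace}. At the level of detail the paper itself supplies, (ii) and (iii) are fine.

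There is, however, a genuine gap in your treatment of (i), and it occurs precisely at the point you flag as delicate. First, the assertion that an infinite-order element of an irreducible affine Coxeter group of rank $\geq 3$ \emph{is} a translation is false: such a group contains glide-type elements $(t,w)$ with nontrivial linear part $w$, which have infinite order but are not translations; only a suitable power is. Second, the device of ``replacing $\gamma$ by a power so that the finite coordinate is trivial, and iterating'' produces a $\gamma'$ with $\la \gamma^k,\gamma'\ra\cong\ZZ\times\ZZ$, which is not the stated conclusion $\la\gamma,\gamma'\ra\cong\ZZ\times\ZZ$, and the suggested repair (pass to $\Pc(\gamma)$ with trivial finite direct factor) does not address the real obstruction, which is a finite-order \emph{twist inside} an irreducible or product piece rather than a finite direct factor. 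Concretely, take $W=D_\infty\times D_\infty$ acting on $\Sigma=\RR^2$ and $\gamma=(t,s)$ with $t$ a nontrivial translation and $s$ a reflection: $\gamma$ has infinite order, is not rank one (its axis lies in a flat plane), yet its centralizer is $\ZZ\times\ZZ/2$, so \emph{no} $\gamma'$ with $\la\gamma,\gamma'\ra\cong\ZZ\times\ZZ$ exists; a glide reflection in a parabolic of type $\widetilde C_2$ behaves the same way. So the argument as written cannot be completed to the literal statement (and the same examples show the ``only if'' half of (ii), which you derive from (i), is equally delicate). What your construction does prove, and what is actually used downstream in Corollary~\ref{cor:existence} and Proposition~\ref{prop:existence:building}, is the power version: either $\gamma$ is rank one or some $\gamma^k$ ($k>0$) lies in a $\ZZ\times\ZZ$ subgroup, equivalently $\la\gamma,\gamma'\ra$ is \emph{virtually} $\ZZ\times\ZZ$ for a suitable $\gamma'$; this suffices because $\Gamma$-equivalence in Lemma~\ref{lem:irreversible} is already defined up to positive powers. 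You should either prove that weaker statement explicitly or restrict to the case where $\Pc(\gamma)$ forces both ``coordinates'' of $\gamma$ to have infinite order.
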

\begin{proof}
The first assertion follows from Proposition~\ref{prop:Coxeter:charactRankOne}. The second assertion follows
from the first and the  easy fact that rank one elements have virtually cyclic centralizer. Assertion (iii)
follows from (i) and the flat torus theorem \cite[Thm.~II.7.1]{Bridson-Haefliger}.
\end{proof}

\begin{cor}\label{cor:existence}
The group $W$ contains two elements $\gamma_1, \gamma_2$ such that $\gamma_1 \not \sim_W \gamma_2$ and $\gamma_1
\not \sim_W \gamma_2\inv$  if and only if $W$ is not a direct product of finite and affine Coxeter groups (or,
equivalently, if $W$ is not virtually abelian).
\end{cor}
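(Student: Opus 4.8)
The plan is to deduce this from the dichotomy in Corollary~\ref{cor:FullFlat} together with the structure theory of Coxeter groups. First I would dispose of the equivalence ``$W$ is not a direct product of finite and affine Coxeter groups $\iff$ $W$ is not virtually abelian''. The implication from left to right is clear since finite and affine Coxeter groups are virtually abelian, and a finite direct product of virtually abelian groups is virtually abelian. For the converse, if $W$ is virtually abelian, then every special subgroup is virtually abelian; decomposing $W$ as a direct product of its irreducible components, each irreducible component is a virtually abelian irreducible Coxeter group, hence either finite or (by the classification of irreducible Coxeter groups of affine type, or by Moussong's criterion) affine. So the parenthetical equivalence holds.

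Next, the ``only if'' direction: suppose $W$ \emph{is} a direct product of finite and affine Coxeter groups, i.e.\ $W$ is virtually abelian, hence virtually $\ZZ^d$ for some $d$. I claim that no two elements $\gamma_1, \gamma_2$ are simultaneously $W$-inequivalent to each other and to each other's inverse. Indeed, the $W$-action on $\Sigma$ is properly discontinuous and cocompact, so $\Sigma$ is quasi-isometric to $\ZZ^d$; any hyperbolic element has an axis that lies in a flat, so no element is contracting (rank one). Then by Proposition~\ref{prop:infiniteHQH} read contrapositively, or more directly: every hyperbolic $\gamma \in W$ is contained in a parabolic subgroup splitting as a product of two infinite pieces or as $K\times P_{\mathrm{aff}}$ with $P_{\mathrm{aff}}$ of rank $\geq 3$, or $W$ itself is affine irreducible of rank $\geq 3$ — in all these cases $\gamma$ has virtually abelian centralizer of rank $\geq 2$, and a short argument with the proper cocompact action (as in Lemma~\ref{lem:irreversible}(ii)–(iii)) shows any two hyperbolic elements are $W$-equivalent up to inverse, since their axes are parallel after passing to powers. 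Hence the asserted pair cannot exist.

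Finally, the ``if'' direction: suppose $W$ is \emph{not} virtually abelian. By the parenthetical equivalence, $W$ is not a direct product of finite and affine Coxeter groups, so $W$ has an irreducible component $W_{S'}$ (with $S' \subseteq S$) which is neither finite nor affine. Restricting attention to $W_{S'}$ and its Davis complex, Corollary~\ref{cor:EssentialElements} produces an essential element $w = s_1\cdots s_n$ with $\Pc(w) = W_{S'}$; since $W_{S'}$ is irreducible, non-spherical and non-affine, $\Pc(w)$ does not split as a product of infinite parabolics and is not of the form $K\times P_{\mathrm{aff}}$, so by Proposition~\ref{prop:Coxeter:charactRankOne} the element $w$ is rank one, hence contracting (rank one elements of proper \CAT0 spaces are contracting, \cite[Thm.~5.4]{BestFuji_symsp}). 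Now $W_{S'}$ contains a rank one element and, being infinite, irreducible and non-spherical, is non-elementary; so Lemma~\ref{lem:irreversible}(iv) (applicable since the $W_{S'}$-action on its Davis complex is properly discontinuous) yields two contracting $\gamma_1, \gamma_2 \in W_{S'}$ with $\gamma_1 \not\sim_{W_{S'}} \gamma_2$ and $\gamma_1 \not\sim_{W_{S'}} \gamma_2^{-1}$. The last point to check is that $W$-inequivalence is implied by $W_{S'}$-inequivalence: if $\gamma_1 \sim_W \gamma_2$ then by Lemma~\ref{lem:irreversible}(ii) some positive powers of $\gamma_1$ and $\gamma_2$ are conjugate \emph{in $W$}, but both elements are essential in $W_{S'}$ so their parabolic closures in $W$ equal $W_{S'}$, forcing the conjugating element to normalize $W_{S'}$; since $W_{S'}$ is an irreducible non-spherical component its normalizer in $W$ is $W_{S'}$ itself times the centralizing complementary components (which commute with everything in $W_{S'}$), so the conjugacy already holds in $W_{S'}$ — contradiction. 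The main obstacle I expect is precisely this last transfer step: making rigorous that $W$-equivalence of two essential elements of an irreducible non-affine component descends to equivalence within that component. Everything else is an assembly of results already established in the excerpt.
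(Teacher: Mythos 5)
Your argument follows the paper's proof exactly: the Coxeter element is essential (Corollary~\ref{cor:EssentialElements}), hence rank one by Proposition~\ref{prop:Coxeter:charactRankOne}, hence contracting by \cite[Th.~5.4]{BestFuji_symsp}, and Lemma~\ref{lem:irreversible}(iv) then produces the two inequivalent elements. The only wrinkle is your final transfer step, which is at once overcomplicated and slightly gapped (the two elements furnished by Lemma~\ref{lem:irreversible}(iv) need not be essential in $W_{S'}$, so you cannot assert $\Pc(\gamma_i)=W_{S'}$, and Lemma~\ref{lem:irreversible}(ii) is stated only for contracting elements, which $\gamma_1,\gamma_2$ need not be in the full Davis complex of $W$ when the complementary components are infinite); but no normalizer argument is needed, since $W_{S'}$ is a direct factor of $W$ and conjugation in a direct product acts componentwise, so any conjugacy $h\gamma_1^k h\inv=\gamma_2^k$ in $W$ projects to one in $W_{S'}$.
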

\begin{proof}
Follows from Lemma~\ref{lem:irreversible}, Corollary~\ref{cor:EssentialElements} and
Proposition~\ref{prop:Coxeter:charactRankOne}, and the fact that for proper \cat spaces, a hyperbolic element is
contracting if and only if it is rank one, see \cite[Th.~5.4]{BestFuji_symsp}.
\end{proof}


Given a hyperbolic element $\gamma \in W$, we say that $\gamma$ is \textbf{irreversible} if $\gamma \not \sim_W
\gamma\inv$. In the case of Coxeter groups, there is a simple algebraic criterion which may be used to detect
irreversibility:

\begin{lem}\label{lem:Coxeter:irreversible}
A rank one element $\gamma \in W$ is irreversible if and only if no positive power of $\gamma$ can be written as
a product $\gamma^k = a.b$ where $a, b \in W$ have order~$2$.
\end{lem}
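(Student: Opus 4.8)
The plan is to translate both sides of the equivalence into statements about the axis $L$ of $\gamma$ and the reflections of $W$ that stabilize $L$ setwise, using the fact (Lemma~\ref{lem:irreversible}(ii) together with the properness of the $W$-action on $\Sigma$) that $\gamma \sim_W \gamma\inv$ is equivalent to some positive power of $\gamma$ being conjugate to the corresponding power of $\gamma\inv$. First I would recall that, since $\gamma$ is rank one, the parallel set of $L$ is a quasi-line and the setwise stabilizer $\Stab_W(L)$ is virtually cyclic (Corollary~\ref{cor:FullFlat}(ii)); write $\langle \gamma_0\rangle$ for the unique maximal infinite cyclic subgroup, so $\gamma = \gamma_0^m$ for some $m \geq 1$. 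An element of $W$ conjugating $\gamma$ to $\gamma\inv$ must preserve the pair of endpoints $\{\gamma^+,\gamma^-\}$ while swapping them, hence preserves $L$ and acts on it as a reflection; thus $\gamma \sim_W \gamma\inv$ if and only if $\Stab_W(L)$ contains an element $t$ inverting $\gamma_0$, i.e. $\Stab_W(L)$ is infinite dihedral rather than infinite cyclic.

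The second step is to make the connection with decompositions $\gamma^k = a\cdot b$ with $a,b$ involutions. In an infinite dihedral group $\langle \gamma_0\rangle \rtimes \langle t\rangle$, every element of the cyclic part is a product of two reflections: $\gamma_0^j = (t\gamma_0^j)\cdot t$, and $t\gamma_0^j$, $t$ both have order $2$. Conversely, if $\gamma^k = a b$ with $a,b \in W$ of order $2$, then $a\gamma^k a = a b a \cdot \ldots$; more directly, $a\, \gamma^k\, a\inv = a(ab)a = ba = (ab)\inv = \gamma^{-k}$, so $a$ conjugates $\gamma^k$ to its inverse, whence $\gamma^k \sim_W \gamma^{-k}$ and therefore $\gamma \sim_W \gamma\inv$. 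This already gives the ``only if'' direction of the contrapositive: if some power $\gamma^k$ is a product of two involutions then $\gamma$ is reversible. For the remaining direction, if $\gamma$ is reversible, Lemma~\ref{lem:irreversible}(ii) yields $\gamma^k$ conjugate to $\gamma^{-k}$ for some $k\geq 1$, say $g\gamma^k g\inv = \gamma^{-k}$; then $g^2$ centralizes $\gamma^k$, so by virtual cyclicity $g^2$ lies in $\Stab_W(L)$'s cyclic part and in fact one can arrange (after adjusting $g$ by a power of $\gamma_0$ and replacing $k$ by a multiple) that $g$ itself is an involution, giving $\gamma^k = (g\gamma^k)\cdot g$ with both factors of order $2$, since $(g\gamma^k)^2 = g\gamma^k g\gamma^k = \gamma^{-k}\gamma^k = 1$.

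The step I expect to be the main obstacle is the last one: passing from ``$g$ conjugates $\gamma^k$ to $\gamma^{-k}$'' to ``the conjugating element can be chosen to be an involution.'' A priori $g$ could be an infinite-order element of $\Stab_W(L)$ composed with the reflection, or could fail to normalize $\langle\gamma_0\rangle$ if $L$ is not the unique axis. The way around this is to exploit that $\gamma$ is rank one: the parallel set of $L$ is then within bounded Hausdorff distance of $L$ itself, so $g$ must stabilize $L$ (not merely quasi-preserve a larger flat), placing $g$ in the genuinely virtually cyclic group $\Stab_W(L)$; inside an infinite dihedral group any element inverting the translation subgroup is automatically an involution, and any element inverting a proper finite-index subgroup $\langle\gamma_0^N\rangle$ can be multiplied by a suitable power of $\gamma_0$ to become a genuine reflection, at the cost of passing to a further power of $\gamma$. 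Care is needed to check that $\Stab_W(L)$ really is infinite dihedral (and not, say, virtually cyclic with torsion that is not a single reflection) — but this follows because $W$ is a Coxeter group acting on $\Sigma$ with all finite-order elements in finite parabolics, so the torsion in $\Stab_W(L)$ consists of reflections through points or walls meeting $L$, and reconciling this with the $\ZZ$-by-finite structure forces the dihedral shape.
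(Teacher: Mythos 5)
Your ``only if'' direction (a power $\gamma^k=a\cdot b$ with $a,b$ involutions forces $a\gamma^k a\inv=\gamma^{-k}$, hence reversibility via Lemma~\ref{lem:irreversible}(ii)) is exactly the paper's argument. The gap is in the converse, at precisely the step you flag as the main obstacle. Having produced $g$ with $g\gamma^k g\inv=\gamma^{-k}$ and $g$ stabilizing the axis $L$, you claim that $\Stab_W(L)$ is infinite dihedral, so that after adjusting $g$ by a power of $\gamma_0$ it becomes an involution. This is not true in general: $\Stab_W(L)$ is an extension of its image in $\Isom(L)$ (here the infinite dihedral group) by the pointwise stabilizer $K$ of $L$, and $K$ can be a nontrivial finite parabolic subgroup, e.g.\ generated by reflections whose walls contain $L$; this is perfectly compatible with $\gamma$ having rank one, since a $(\mathrm{finite})\times\ZZ$ centralizer is still virtually cyclic. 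Your supporting claim that ``the torsion in $\Stab_W(L)$ consists of reflections'' is also incorrect: torsion elements of $W$ lie in finite parabolics but need not be reflections. Multiplying $g$ by a power of $\gamma_0$ changes which reflection of $L$ is induced, but cannot kill a nontrivial $g^2\in K$, so the existence of a genuine involution inverting $\gamma^k$ is not established by your argument.

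The paper closes exactly this gap by a different device: after normalizing $g$ so that $g^2$ acts trivially on $L$, properness of the $W$-action forces $g^2$ (which fixes the unbounded set $L$ pointwise) to have finite order, hence $g$ has finite even order $2m$; one then takes $a=g^m$, which \emph{is} an involution and still conjugates $\gamma^k$ to $\gamma^{-k}$, and sets $b=g^m\gamma^k$. In other words, you do not need $g$ itself to be an involution --- only a suitable power of it --- and that is the missing idea. (A cosmetic slip as well: with $a=g$, $b=g\gamma^k$ the factorization is $\gamma^k=g\cdot(g\gamma^k)$, not $(g\gamma^k)\cdot g$; the latter equals $\gamma^{-k}$ when $g^2=1$.)
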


\begin{remark}
Lemma~\ref{lem:Coxeter:irreversible} can be used to obtain the following refinement of
Corollary~\ref{cor:EssentialElements}: if $W$ is infinite, irreducible and non-affine, then the Coxeter element
is irreversible as soon as the Coxeter diagram of $(W,S)$ is not a star, i.e. there is no element $s \in S$ such
that the parabolic subgroup $W_{S \minus \{s\}}$ is a finite elementary abelian $2$-group.
\end{remark}

\begin{proof}[Proof of Lemma~\ref{lem:Coxeter:irreversible}]
If $\gamma^k = a.b$ with $a, b$ involutions for some $k>0$, then $\gamma^k$ is conjugate to $\gamma^{-k}$ and,
hence, $\gamma\sim \gamma\inv$ by \cite[Prop.~6.5(3)]{BestFuji_symsp}. Thus $\gamma$ is not irreversible.

Suppose now that $\gamma$ is not irreversible. Then by properness  $W$ possesses an element $g$ which stabilises
some $\gamma$-axis $L$ and satisfies  $g\gamma g\inv|_L = \gamma\inv_L$. By Selberg's lemma $W$ possesses a
torsion free normal subgroup of finite index, which acts thus freely on $\Sigma$. Let $k > 0$ be such that
$\gamma^k$ belongs to this finite index subgroup. Then $g\gamma^k g\inv = \gamma^{-k}$. Since $W$ acts properly
on $\Sigma$, the subgroup of $W$ which stabilises $L$ is virtually cyclic; thus we may and shall assume that
$g^2$ acts trivially on $L$. In particular $g$ is a torsion element of $W$ of even order, say $2m$. Notice that
$g^m \gamma g^{-m}|_L = \gamma\inv_L$, whence $g^m \gamma^k g^{-m} = \gamma^{-k}$. We set $a = g^m =g^{-m}$ and
$b = g^m \gamma^k$. Then clearly $\gamma^k = a.b$ and $a^2 = 1 = b^2$, as desired.
\end{proof}


\section{Rank one isometries of buildings}\label{sect:buildings}\label{sec:bdg}

\subsection{Definitions and basic facts}

Let $(W,S)$ be a Coxeter system. A \textbf{building} of type $(W, S)$ is a set $\mathcal{C}$ endowed with a map
$\delta:  \mathcal{C} \times \mathcal{C} \to W$ submitted to the following conditions, where $x,y \in {\mathcal
C}$ and $w = \delta(x,y)$:

\begin{description}
\item[(Bu1)] $w = 1$ if and only if $x = y$;

\item[(Bu2)] if $z \in {\mathcal C}$ is such that $\delta(y,z) = s \in S$, then $\delta(x,z) = w$ or $ws$, and
if, furthermore, $l(ws) = l(w) + 1$, then $\delta(x,z) = ws$;

\item[(Bu3)] if $s \in S$, there exists $z \in {\mathcal C}$ such that $\delta(y,z) = s$ and $\delta(x,z) = ws$.
\end{description}

The map $\delta$ is called the \textbf{Weyl distance}. An \textbf{automorphism} of $(\mathcal{C}, \delta)$ is a
permutation of $\mathcal{C}$ which preserves the Weyl distance. The map $\delta_W : W \times W \to W$ defined by
$\delta_W(x, y) = x\inv y$ turns canonically $W$ into a building of type $(W,S)$. Any subset of a building
$(\mathcal{C}, \delta)$ of type $(W,S)$ which is Weyl-isometric to $(W, \delta_W)$ is called an
\textbf{apartment}. Given a subset $J \subseteq S$, we denote $W_J = \la J \ra$. Given any chamber $c \in
\mathcal C$, the set
$$ \Res_J(c) = \{x \in \mathcal C \; | \; \delta(c, x) \in W_J\}$$
is called the \textbf{residue} of type $J $ containing $c$. An important fact is that a residue of type $J$,
endowed with the appropriate restriction of the Weyl distance, is a building of type $(W_J, J)$. We refer to
\cite{Weiss_book} for the general theory.

\medskip An important fact is that any building of type $(W,S)$ possesses a geometric realization as a \cat
metric space \cite{Davis}. In other words, given a building $\mathscr{B} = (\mathcal{C}, \delta)$ of type
$(W,S)$, there exists a \cat space $X_\mathscr{B}$ and a canonical injection $\Aut(\mathscr{B}) \to
\Isom(X_\mathscr{B})$. We will identify all elements of $\Aut(\mathscr{B})$ to their image in
$\Isom(X_\mathscr{B})$.

\subsection{A characterization of rank one elements}

\begin{thm}\label{thm:CharRankOne}
Let $\mathscr{B} = (\mathcal{C}, \delta)$ be a building of type $(W,S)$ and let $\gamma \in \Aut(\mathscr{B})$
be a hyperbolic element. Then the following assertions are equivalent:
\begin{itemize}
\item[(i)] $\gamma$ is a rank one isometry of $X_\mathscr{B}$.

\item[(ii)] $\gamma$ is contracting.

\item[(iii)] $\gamma$ does not stabilise any residue whose Weyl group is of the form $W_I \times W_J$, where
either $W_I$ and $W_J$ are both infinite, or $W_I$ is affine and $W_J$ is finite.
\end{itemize}
\end{thm}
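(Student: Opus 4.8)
The plan is to establish the cycle of implications (iii)~$\Rightarrow$~(ii)~$\Rightarrow$~(i)~$\Rightarrow$~(iii), borrowing as much as possible from the Coxeter-group analysis of Section~\ref{sec:Coxeter} and transferring it to the building via apartments. The implication (ii)~$\Rightarrow$~(i) holds in any \cat space: a $B$-contracting axis cannot bound a flat half-plane, since such a half-plane would contain metric balls arbitrarily far from $L$ whose projection to $L$ has unbounded diameter (this is routine and does not use the building structure). The implication (i)~$\Rightarrow$~(iii) is a direct obstruction argument, entirely parallel to Proposition~\ref{prop:Coxeter:charactRankOne}: if $\gamma$ stabilises a residue $\mathcal{R}$ whose Weyl group is $W_I\times W_J$ with $W_I,W_J$ infinite (resp.\ $W_I$ affine of rank $\geq 3$, $W_J$ finite), then the \cat realization $X_{\mathcal{R}}$ of that residue embeds convexly and isometrically in $X_{\mathscr{B}}$, it splits as a product $X_I\times X_J$ (resp.\ contains a Euclidean factor), $\gamma$ preserves this splitting, and an axis of $\gamma$ lies in $X_{\mathcal{R}}$ and bounds a flat half-plane coming from a geodesic ray in the complementary factor; hence $\gamma$ is not rank one.

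The substantial implication is (iii)~$\Rightarrow$~(ii). First I would reduce to a statement about a single apartment: an axis $L$ of $\gamma$ is contained, up to bounded Hausdorff distance, in an apartment $A$ of $\mathscr{B}$ (one can choose $A$ containing two chambers at positions $\gamma^{-N}c$ and $\gamma^{N}c$ for large $N$, and then the convexity of apartments in the \cat realization forces $L$ to lie in $A$ after replacing it by a parallel axis). Inside $A\cong\Sigma$ we are in the Davis complex of $(W,S)$, and $\gamma$ acts there as an isometry, but the key point is that $\gamma$ need not lie in $W$ — so I cannot directly invoke Section~\ref{sec:Coxeter}. What survives, however, is geometric: by Proposition~\ref{prop:FlatHalfSpace}, if $L$ bounds a flat half-space $H$ inside $\Sigma$, then the parabolic closure $P$ of the reflections fixing points of $H$ splits as $K\times P_1\times\cdots\times P_k$ with $k\geq \dim$ of the flat, and $P$ contains every element of $W$ carrying a point of $H$ into $H$. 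The plan is to show that such a flat half-space in $A$ forces $\gamma$ to stabilise the residue corresponding to $P$: the combinatorial convex hull of $H\cap\mathcal{C}$ is exactly a residue $\mathcal{R}$ of type determined by $P$, and since $\gamma$ maps (a sub-half-space of) $H$ into $H$, it must stabilise $\mathcal{R}$. Then the product decomposition of $P$ gives the forbidden Weyl-group decomposition of $\mathcal{R}$, contradicting (iii).

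The main obstacle — and the place where the argument needs the most care — is the absence of properness: $X_{\mathscr{B}}$ is only a finite-dimensional \cat space, not a proper one, so I cannot quote \cite[Thm.~5.4]{BestFuji_symsp} to get rank one $\Leftrightarrow$ contracting for free, nor can I use compactness of the boundary. I expect the fix is to work entirely combinatorially inside the apartment and to use finite-dimensionality as a substitute: a flat half-plane asymptotic to $L$ is detected by a single extra flat direction transverse to $L$, and the Davis-complex geometry (via Proposition~\ref{prop:FlatHalfSpace}) converts this into the algebraic product decomposition of a parabolic subgroup, which is a genuinely finitary piece of data. The delicate step is verifying that ``$\gamma$ moves a point of $H$ into $H$'' — needed to force $\gamma\in P$-type stabiliser — really does hold: here one uses that $L\subset H$ is a $\gamma$-axis, so $\gamma$ translates along $L$ and, for a suitable sub-half-space $H'\subseteq H$ still bounded by (a ray of) $L$, one has $\gamma(H')\subseteq H$; combined with Proposition~\ref{prop:FlatHalfSpace}(ii) applied within the apartment this yields $\gamma\in\Stab_{\Aut(\mathscr{B})}(\mathcal{R})$. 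Once that is in place, the contracting constant $B$ can be extracted uniformly from $\dim X_{\mathscr{B}}$ by the usual argument that a rank one axis in a finite-dimensional \cat space is $B$-contracting with $B$ depending only on the dimension.
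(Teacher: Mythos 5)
Your cycle (iii)$\Rightarrow$(ii)$\Rightarrow$(i)$\Rightarrow$(iii) runs in the opposite direction to the paper's ((i)$\Rightarrow$(ii)$\Rightarrow$(iii)$\Rightarrow$(i)), and the steps (ii)$\Rightarrow$(i) and ``$\gamma$ stabilises a forbidden residue $\Rightarrow$ not rank one'' are indeed routine. But the proposal has a genuine gap at exactly the point that constitutes the content of the theorem, namely the passage from \emph{rank one} to \emph{contracting} when $X_{\mathscr{B}}$ is not proper. You defer this to ``the usual argument that a rank one axis in a finite-dimensional \cat space is $B$-contracting with $B$ depending only on the dimension.'' No such argument exists: one can glue Euclidean rectangles $[0,n]\times[0,n^2]$ (for all $n$ and all integer translates) along segments of a line $L$ in $\RR$ to obtain a complete, $2$-dimensional, non-proper \cat space in which $L$ is the axis of a translation, bounds no flat half-plane, and is not $B$-contracting for any $B$. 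Finite-dimensionality alone buys nothing here; the building structure is essential. The paper's mechanism is the retraction $\rho=\rho_{A,c}$ onto an apartment $A$ containing the axis (which exists by \cite[Thm.~E]{CH06}): $\rho$ is $1$-Lipschitz, fixes $A$ pointwise, and satisfies $d(c,\pi(\rho(z)))\geq d(c,\pi(z))$ for the projection $\pi$ to $L$, so a failure of contraction in $X_{\mathscr{B}}$ is pushed into the apartment $A\cong\Sigma$, which \emph{is} proper, where \cite[Thm.~5.4]{BestFuji_symsp} applies to the isometry $\rho\circ\gamma|_A$. Your proposal never invokes the retraction, and without it this implication does not close.

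The same omission undermines the step where you apply Proposition~\ref{prop:FlatHalfSpace}(ii) to $\gamma$: that proposition concerns elements of $W$ acting on $\Sigma$, whereas $\gamma$ is an automorphism of the building and does not preserve the apartment $A$, so ``$\gamma$ maps a point of $H$ into $H$'' is not a statement to which the proposition applies. The paper's fix is again the retraction: after including the flat half-plane $H\supset L$ into an apartment $A$ (by the arguments of \cite[Thm.~6.3]{CH06} --- your sketch via two far-apart chambers does not by itself force $L$, let alone $H$, into a single apartment), the composite $\rho\circ\gamma|_A$ is an automorphism of the thin building $A$, hence an element of $W$ to which Proposition~\ref{prop:FlatHalfSpace} applies; since it agrees with $\gamma$ on the residue data at $c$, one concludes that $\gamma$ itself stabilises the residue of type $I\cup J$ containing $c$. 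With the retraction inserted at these two places your outline becomes essentially the paper's proof; without it, both the (i)/(iii)$\Rightarrow$(ii) direction and the residue-stabilisation step are unproved.
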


\begin{proof}
Let $L$ be an axis of $\gamma$ and $A$ be an apartment containing $L$; such an apartment exists by
\cite[Thm.~E]{CH06}. Let $\pi $ denote the nearest point projection to $L$ and $C \subset L$ be a compact
segment which is a fundamental domain for the $\la \gamma \ra$--action on $L$.

\medskip \noindent (i) $\Rightarrow$ (ii) Suppose for a contradiction that $\gamma$ is a rank one isometry but
that $L$ is not $B$-contracting for any $B$. Then there exist sequences $(x_n)$ and $(y_n)$ in $X_\mathscr{B}$
such that $d(\pi(x_n), \pi(y_n))$ tends to infinity with $n$. Upon applying appropriate elements, we may and
shall assume that $(x_n)$ is contained in $C$ for all $n$. Upon extracting a subsequence, we may further assume
that there exists a point $c \in L$, exterior to $C$, which separates $C$ from $\pi(y_n)$ for all $n$. We denote
by
$$\rho = \rho_{A, c}$$
the retraction onto $A$ centered at $c$. Recall that this map is the identity on $A$, it does not increase
distances and its restriction to any geodesic segment emanating from $c$ (and more generally to any apartment
containing $c$) is an isometry onto its image (see \cite[\S4.4]{AB06}). We claim that for any $z \in
X_\mathscr{B}$ such that $\pi(z) \neq c$, we have $d(c,\pi(\rho(z))) > d(c, \pi(z))$. Indeed, given such an $z$,
we have $\angle_{\pi(z)}(c, z) \geq \pi/2$. Therefore the nearest point projection of $c$ to the geodesic
segment $[z, \pi(z)]$ is $\pi(z)$. By the properties of the retraction $\rho$, this implies that the projection
of $c$ to $[\rho(z), \rho(\pi(z))]$ is $\rho(\pi(z)) = \pi(z)$. Thus $\angle_{\pi(z)}(c, \rho(z)) \geq \pi/2$
and it follows that $d(\pi(\rho(z)), c) \geq d(\pi(z), c)$, which proves the claim.

Applying this to $z=x_n$ and $z=x_n$, we deduce that
$$d(\pi(\rho(x_n)), \pi(\rho(y_n)))\geq d(\pi(x_n), \pi(y_n)),$$
which tends to $\infty$ with $n$. This shows that
the line $L$, viewed as a line in the apartment $A$, is not $B$-contracting for any $B$. On the other hand, the
map $\rho\circ \gamma|_A : A \to A$ is an isometry which has clearly rank one by assumption, since $A$ is a
closed convex subspace of $X_{\mathscr{B}}$. Thus it is $B$-contracting in view of
\cite[Thm.~5.4]{BestFuji_symsp}. This is a contradiction.

\medskip \noindent (ii) $\Rightarrow$ (iii) Assume that $\gamma$ stabilises a residue $R$ whose Weyl group is
of the form $W_I \times W_J$ as in (iii). Since $R$ is a building of type $(W_I \times W_J, I \cup J)$, it
follows that the Tits boundary of its \cat realisation $X_R$ has diameter $\pi$ and, hence, does not contain any
rank one isometry. In particular $X_R$ has no $B$-contracting isometry. The result follows, since $X_R$ is
isometrically embedded in $X_{\mathscr{B}}$.

\medskip \noindent (iii) $\Rightarrow$ (i) Suppose that $\gamma$ is not a rank one isometry;
in other words some $\gamma$-axis $L$ is contained in a flat half-plane, say $H$. The arguments of
\cite[Thm.~6.3]{CH06} show that $H$ is contained in an apartment $A$. Let $c \in L$ be any point and $\rho =
\rho_{A, c}$ be the retraction onto $A$ centered at $c$. Proposition~\ref{prop:FlatHalfSpace} implies that $\rho
\circ \gamma|_A$ is an isometry of $A$ contained in a parabolic subgroup of the form $W_I \times W_J$ as in the
statement of (iii). This implies that $\gamma$ stabilises the residue of type $I \cup J$ containing $c$, thereby
contradicting (iii).
\end{proof}

\subsection{Existence of rank one elements in Weyl--transitive groups} In order to deal with the question of
existence, we shall transfer to the whole building the constructions performed so far at the level of
apartments. An essential tool in doing this is the retraction that we have just considered.

\medskip
As before, let $\mathscr{B} = (\mathcal{C}, \delta)$ be a building of type $(W,S)$ and $g_1, g_2 \in
\Aut(\mathscr{B})$ be rank one elements. For $i \in \{1, 2\}$ let also $L_i$ be an axis of $g_i$, $A_i$ be an
apartment containing $L_i$, $c_i \in L_i$ be any point and $\rho_i = \rho_{A_i, c_i}$ be the retraction onto
$A_i$ centred at $c_i$. Then $\gamma_i := \rho_i \circ g_i|_{A_i} : A_i \to A_i$ is an automorphism of the
apartment $A_i$.

Recall that any apartment is isomorphic to the Davis complex $\Sigma$,  i.e. the standard \cat realization of the
thin building $(W, \delta_W)$. We now would like to compare $\gamma_1$ and $\gamma_2$ as elements of $W =
\Aut(W, \delta_W) < \Isom(\Sigma)$. In order to do this properly, we need to choose identifications $A_i \cong
\Sigma$ and make sure that our considerations are independent of this choice.

Crucial to us is the following:

\begin{lem}\label{lem:bdg:irreversible}
For $i \in \{1, 2\}$, let $f_i : A_i \cong \Sigma$ be any isomorphism (of thin buildings).

If the elements $g_1$ and $g_2$ are $\Aut(\mathscr{B})$-equivalent, then $f_1 \gamma_1 f_1\inv$ and $f_2
\gamma_2 f_2\inv$, viewed as elements of $W$, are $W$-equivalent.

If the elements $g_1$ and $g_2$ are not independent, then $f_1 \gamma_1 f_1\inv \sim_W f_2 \gamma_2 f_2\inv$ or
$f_1 \gamma_1 f_1\inv \sim_W f_2 \gamma_2\inv f_2\inv$.
\end{lem}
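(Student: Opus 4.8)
The statement is a transfer result: $\Aut(\mathscr{B})$-equivalence (resp. non-independence) of the rank one elements $g_1,g_2$ of the building forces $W$-equivalence (resp. $W$-equivalence up to inversion) of the ``shadows'' $\gamma_i = \rho_i\circ g_i|_{A_i}$ inside a single copy of the Davis complex $\Sigma$. The plan is to chase the definitions of independence and $\Gamma$-equivalence through the retraction $\rho_i$, exploiting two of its key properties: it is $1$-Lipschitz, and it restricts to an isometry on any apartment through its centre. The first thing I would record is that the choice of isomorphism $f_i:A_i\cong\Sigma$ is harmless: any two such differ by an element of $W=\Aut(W,\delta_W)$, and both $W$-equivalence and ``$W$-equivalence up to inversion'' are conjugacy-invariant in $W$ (this is exactly Remark~\ref{rem:conjugacy}, which applies since $W$ acts properly discontinuously and cocompactly on $\Sigma$); so it suffices to prove each conclusion for one convenient choice of the $f_i$.

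\textbf{Reduction to a base-point estimate.} The core observation is that for a base point $x_i\in L_i$, the $\langle g_i\rangle$-orbit $\{g_i^n.x_i\}$ is contained in $A_i$ and lies on the axis $L_i$, on which $\rho_i$ is an isometry (any apartment containing $c_i$ is preserved isometrically, and $L_i\subset A_i\ni c_i$). Hence $\rho_i(g_i^n.x_i) = \gamma_i^n.\rho_i(x_i)$ for all $n$, and $d(\gamma_i^n.\rho_i(x_i),\gamma_i^m.\rho_i(x_i)) = d(g_i^n.x_i,g_i^m.x_i)$. Via the isometry $f_i$ this transports verbatim to $\Sigma$. So the orbit maps $n\mapsto f_i\gamma_i f_i^{-1}$-translates of a base point in $\Sigma$ agree, as metric sequences, with the orbit maps $n\mapsto g_i^n.x_i$ in $X_{\mathscr{B}}$ up to the isometry $f_i\circ\rho_i|_{A_i\cap(\text{relevant set})}$. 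For the independence statement: if $f_1\gamma_1 f_1^{-1}$ and $f_2\gamma_2 f_2^{-1}$ were independent in $W$, the properness of $(m,n)\mapsto d(\cdot)$ of their orbits would, after pulling back through the (distance-nonincreasing, isometric-on-axes) retractions and then pushing forward under the inclusions $A_i\hookrightarrow X_{\mathscr B}$, force $(m,n)\mapsto d(g_1^m.x_1,g_2^n.x_2)$ to be proper as well — here one uses $d(g_i^m.x_i, g_j^n.x_j)\ge$ (something controlled by the $\Sigma$-distances, up to bounded error coming from $d(x_i,\rho_i(x_i)\text{-type terms})$ and from changing base points). That contradicts non-independence of $g_1,g_2$, proving the contrapositive of the second assertion, modulo handling the ``up to inversion'' dichotomy, which one gets because in a virtually cyclic stabilizer situation non-independence already splits as $\sim$ or $\sim$-inverse (the proper analogue is Lemma~\ref{lem:irreversible}(iii), applied inside $W$).

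\textbf{The equivalence statement.} Suppose $g_1\sim_{\Aut(\mathscr B)} g_2$: there is $\delta>0$ so that for each $r$ some $h\in\Aut(\mathscr B)$ has $d(g_1^n.x_1, h g_2^n.x_2)<\delta$ for all $|n|\le r$. I want to produce, for each $r$, an element $w_r\in W$ with $d((f_1\gamma_1 f_1^{-1})^n.\bar x_1, w_r (f_2\gamma_2 f_2^{-1})^n.\bar x_2)<\delta'$ for $|n|\le r$ and a uniform $\delta'$. The natural candidate is obtained by pushing the quasi-conjugating element $h$ through a retraction: the $h$-image of (a segment of) $L_2$ is a geodesic segment in $X_{\mathscr B}$ at bounded distance from a segment of $L_1\subset A_1$; retracting onto $A_1$ via $\rho_{A_1,c_1}$ and using that $\rho_1$ is $1$-Lipschitz and isometric on $L_1$, the points $\rho_1(h g_2^n.x_2)$ stay within $\delta$ of $\gamma_1^n.\rho_1(x_1)$. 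It then remains to recognize the map $n\mapsto\rho_1(h g_2^n.x_2)$, on the relevant bounded range, as a $W$-translate of the $\gamma_2$-orbit: this is the step where one invokes that an isometry of $\Sigma$ moving a long segment of one axis to within bounded distance of another, while intertwining the two translations up to bounded error on that segment, can be replaced by an element of $W$ doing so exactly on a slightly shorter segment (standard in the Davis complex by properness and the combinatorial structure — one picks $w_r\in W$ sending the chamber near $c_2$ to a chamber near $\rho_1(hg_2^0 x_2)$ and controls the rest by the quasi-isometry constants). Conjugating by the $f_i$ and absorbing their difference into conjugacy-invariance of $\sim_W$ finishes the argument.

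\textbf{Main obstacle.} The delicate point is the last reduction in both parts: turning a coarse/approximate intertwining in the building into an \emph{exact} $W$-equivalence between specific isometries of $\Sigma$. The retraction $\rho_i$ is only $1$-Lipschitz, not an isometry, off the apartments through $c_i$, so controlling $d(\rho_i(p),\rho_i(q))$ from \emph{below} — which is what one needs to preserve properness for independence and to keep the $\delta$-tracking honest for equivalence — requires care: one must arrange that the relevant points lie on, or boundedly near, geodesics through $c_i$, or chop the range $[-r,r]$ into pieces on which this holds. I expect the cleanest route is to fix $c_i$ on $L_i$ and enlarge/shift it along $L_i$ with $n$ (as in the proof of Theorem~\ref{thm:CharRankOne}, where $c$ was chosen exterior to $C$ to separate the relevant points), so that for the range under consideration the retraction is effectively an isometry on the geodesics involved; the bookkeeping of these choices, and verifying the bounds are uniform in $r$, is the main work.
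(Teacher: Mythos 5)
The reduction you begin with (the choice of $f_i$ only changes $f_i\gamma_i f_i\inv$ by a $W$-conjugate, so Remark~\ref{rem:conjugacy} applies) is exactly the paper's first step, and you have correctly located the difficulty. But the step you defer to the ``main obstacle'' paragraph is not a technical bookkeeping issue: it is the entire content of the lemma, and the mechanism you propose for it does not work. The problem is that the two identifications $f_1:A_1\cong\Sigma$ and $f_2:A_2\cong\Sigma$ are a priori unrelated, so the $\gamma_2$-orbit in $\Sigma$ (recorded via $f_2$ inside $A_2$) and anything produced by retracting onto $A_1$ live in two copies of $\Sigma$ with no given dictionary between them. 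Your candidate sequence $n\mapsto\rho_1(h g_2^n.x_2)$ does track $\gamma_1^n.x_1$ within $\delta$ (that much follows from $\rho_1$ being $1$-Lipschitz and the identity on $A_1$), but the assertion that it is, up to bounded error, of the form $w_r\,(f_2\gamma_2 f_2\inv)^n.\bar x_2$ for a \emph{single} $w_r\in W$ is precisely the statement $f_1\gamma_1 f_1\inv\sim_W f_2\gamma_2 f_2\inv$ being proved; invoking it as ``standard in the Davis complex by properness'' is circular. Note also that the ``isometric on apartments through the centre'' property of $\rho_1$ is of no help here, since $h.A_2$ need not contain $c_1$, and the $1$-Lipschitz property only gives upper bounds on distances, whereas both of your arguments (the properness transfer for independence and the $\delta$-tracking for equivalence) require lower bounds.

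The paper bridges the two apartments combinatorially rather than metrically. One first replaces the $g_i$ by positive powers so that $\delta_W(c_i,\gamma_i^n.c_i)=w_i^n$ with $\ell(w_i^n)=n\,\ell(w_i)$ (a normalization absent from your sketch, but needed to read off powers of a fixed group element from the orbit). Since $g_i^n.c_i$ and $\gamma_i^n.c_i$ share a point of $L_i$, they are at uniformly bounded gallery distance, so $\delta(c_i,g_i^n.c_i)=w_i^n\vareps_{i,n}$ with $\vareps_{i,n}$ of bounded length. Because building automorphisms preserve the Weyl distance exactly, the approximate intertwiners $g_n$ then yield the \emph{exact} identity $w_2^n=a_n w_1^n b_n$ in $W$ with $a_n,b_n$ of bounded length; extracting a subsequence gives $w_2^{k_n}=a\,w_1^{k_n}b$, whence $a.L_1^+=L_2^+$ in $\bd\Sigma$ and therefore $w_1\sim_W w_2$ by Lemma~\ref{lem:irreversible}. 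It is this passage through the $\Aut(\mathscr{B})$-invariant Weyl distance --- not the CAT(0) metric and the retraction --- that produces the element of $W$ your argument is missing; without it, or some substitute for it, the proposal does not close.
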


\begin{proof}
The first thing to observe is that any modification of the isomorphism  $f_1 : A_1 \cong \Sigma$ amounts to
replacing the element $f_1 \gamma_1 f_1\inv \in W$ by a $W$-conjugate. In view of Remark~\ref{rem:conjugacy},
the assertion of Lemma~\ref{lem:bdg:irreversible} is thus clearly independent of the choices of the $f_i$'s. In
order to avoid unnecessarily heavy notation, we shall henceforth identify both $A_1$ and $A_2$ to $\Sigma$ by
means of $f_1$ and $f_2$ respectively and, hence, omit to write the maps $f_1$ and $f_2$. In other words, the
elements $\gamma_1$ and $\gamma_2$ will be viewed as elements of $W$ acting on $\Sigma$.

\smallskip%
Fix a chamber $c_i \subset A_i$ such that $c_i$ meets $L_i$.
Upon replacing respectively $ g_1$ and  $g_2$  by some positive powers, we may and shall assume further that
\begin{equation}\label{eq:1}
\delta_W(c_i,  \gamma_i^n.c_i) = \delta_W(c_i, \gamma_i.c_i)^n
\end{equation}
for all $n > 0$ and $i = 1, 2$. Since furthermore the chambers $g_i^n.c_i$ and $\gamma_i^n.c_i$ intersect in a
point of $L_i$, the Weyl group element $\delta(g_i^n.c_i,\gamma_i^n.c_i)$ is contained in some standard finite
parabolic subgroup of $W$ for all $n>0$ and $i=1, 2$; in particular it is of uniformly bounded length. We deduce
that there exist an element $\vareps_{i, n} \in W$ of uniformly bounded length such that
\begin{equation}\label{eq:2}
\delta(c_i, g_i^n.c_i) = \delta_W(c_i, \gamma_i^n.c_i)\vareps_{i, n}
\end{equation}
for all $n>0$ and $i=1, 2$.

Suppose now that $g_1$ and $ g_2$ are $\Aut(\mathscr{B})$-equivalent. Then there exists a constant $D
>0$ and for each $n$ some element $g_n \in \Aut(\mathscr{B})$ such that
\begin{equation}\label{eq:3}
\ell \circ \delta(g_n.c_1, c_2) < D \hspace{.5cm} \text{and} \hspace{.5cm}
 \ell \circ \delta(g_n g_1^n. c_1, g_2^n c_2) < D
\end{equation}
for all $n > 0$, where $\ell : W \to \NN$ denotes the word length with respect to the Coxeter generating set
$S$. From~\eqref{eq:1},~\eqref{eq:2} and~\eqref{eq:3}, we deduce that there exist two sequences $(a_n)$, $(b_n)$
of elements of $W$, of uniformly bounded length, such that
$$
w_2^n = a_n.w_1^{n}.b_n
$$
for all $n>0$, where $w_i = \delta_W (c_i, \gamma_i.c_i) \in W$. Therefore, upon extracting a subsequence, we
obtain two elements $a, b \in W$ such that $w_2^{k_n} = a.w_1^{k_n}.b$ for all $n> 0$, where $(k_n)$ is a
strictly increasing sequence of positive integers.

Upon replacing $\gamma_1$ and $\gamma_2$ by a $W$-conjugate, we may assume that $c_1=c_2$ and that this chamber
corresponds to the identity element of $W$ (recall that the $1$-skeleton of $\Sigma$ is nothing but the Cayley
graph of $(W,S)$). This choice of parametrization yields $\gamma_1 = w_1$ and $\gamma_2=w_2$.

Let now $L_1^+, L_2^+ \in \bd \Sigma$ be the respective attracting fixed points of $w_1$, $w_2$ at infinity. Set
$c_0:= c_1 = c_2$. Since $w_2^n.c_0 \to L_2^+$ while $w_1^{n} b. c_0 \to L_1^+$ at the limit when $n$ tends to
infinity, if follows from the equality $w_2^{k_n} = a w_1 ^{k_n} b$ that $a.L_1^+ = L_2^+$. Thus $aw_1 a\inv$
and $w_2$ have the same attracting fixed point at infinity, namely $L_2^+$. By Lemma~\ref{lem:irreversible},
this implies that $w_1 \sim_W w_2$.

\medskip Assume that that $g_1$ and $g_2$ are not independent. In other words the axes $L_1$ and $L_2$
contain respectively rays which are asymptotic to each other. It follows that upon replacing $g_1$ and $g_2$ by
appropriate nonzero powers \eqref{eq:3} holds with $g_n \equiv 1$ for some $D >0$ and all $n \geq 0$. The same
argument as above may be repeated and now yields either $w_1 \sim_W w_2$ or $w_1 \sim_W w_2\inv$.
\end{proof}

\begin{prop}\label{prop:existence:building}
Let $\mathscr{B} = (\mathcal{C}, \delta)$ be a building of irreducible type $(W,S)$ and $G< \Aut(\mathscr{B})$
be a group of automorphisms acting Weyl--transitively on the chambers. Then $G$ contains two independent
elements $g_1, g_2$ such that $g_1 \not \sim_{\Aut(\mathscr{B})} g_2$ if and only if $(W, S)$ is neither
spherical nor affine (or, equivalently, if $W$ is not virtually abelian).
\end{prop}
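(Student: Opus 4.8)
The plan is to deduce Proposition~\ref{prop:existence:building} from the corresponding statement for Coxeter groups (Corollary~\ref{cor:existence}) by transferring elements back and forth between the building $\mathscr{B}$ and a fixed apartment $\cong \Sigma$, using the retraction and the comparison result Lemma~\ref{lem:bdg:irreversible}. First I would dispose of the ``only if'' direction: if $(W,S)$ is spherical then $\mathscr{B}$ has no hyperbolic isometries at all, and if $(W,S)$ is affine then $\Sigma$ is a Euclidean space, so every apartment is a flat; since every axis of a hyperbolic element lies in an apartment by \cite[Thm.~E]{CH06}, no hyperbolic element of $\Aut(\mathscr{B})$ can have rank one, and by Theorem~\ref{thm:CharRankOne} none is contracting. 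Then $G$ contains no contracting element, so in particular it cannot contain two independent contracting elements that are $\Aut(\mathscr{B})$-inequivalent (independence plus inequivalence forces both to be contracting in the relevant sense; more to the point, the existence of the required pair $g_1,g_2$ would itself produce a contracting element, which is impossible). This settles the contrapositive of ``only if''.

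For the ``if'' direction, assume $(W,S)$ is neither spherical nor affine; equivalently $W$ is not virtually abelian, i.e.\ not a direct product of finite and affine Coxeter groups. By Corollary~\ref{cor:existence} there exist $w_1, w_2 \in W$ with $w_1 \not\sim_W w_2$ and $w_1 \not\sim_W w_2\inv$; moreover by Corollary~\ref{cor:EssentialElements} and Proposition~\ref{prop:Coxeter:charactRankOne} we may take $w_1, w_2$ to be essential, hence rank one as isometries of $\Sigma$ (an essential element is contained in no proper parabolic, so in none of the three excluded parabolic types). Now fix an apartment $A \subset \mathscr{B}$ and an identification $A \cong \Sigma$. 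The key point is that Weyl-transitivity of $G$ lets us realize $W$-translation data inside $\mathscr{B}$: given a chamber $c \in A$ corresponding to $1 \in W$, Weyl-transitivity provides, for each $i$ and each $n$, an element of $G$ carrying the pair $(c, c)$ to a pair $(c, d)$ with $\delta(c,d) = w_i^n$. I would use this to build genuine hyperbolic automorphisms $g_1, g_2 \in G$ whose axes lie (up to bounded error) along the $w_i$-axes of $A$ and which, when retracted back to $A$ via $\rho_{A,c}$, recover $w_1$ and $w_2$ in the sense of the maps $\gamma_i$ of Lemma~\ref{lem:bdg:irreversible}. Concretely: pick $g_i \in G$ with $\delta(c, g_i.c) = w_i$ and such that $g_i$ stabilizes the apartment $A$ (Weyl-transitivity gives an element moving $c$ the right way; one then checks, possibly after adjusting, that a power stabilizes $A$ along the relevant geodesic, or alternatively works directly with $\rho_{A,c}\circ g_i|_A$ as in the setup preceding Lemma~\ref{lem:bdg:irreversible}).

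With $g_1, g_2$ in hand, Theorem~\ref{thm:CharRankOne} gives that each $g_i$ is rank one / contracting, because the retracted map $\gamma_i = \rho_{A,c}\circ g_i|_A$ equals $w_i$, which is essential and thus stabilizes no residue of the three excluded product types; the implication (iii)$\Rightarrow$(i) of Theorem~\ref{thm:CharRankOne} then applies. Finally, the contrapositive of Lemma~\ref{lem:bdg:irreversible}: if $g_1$ and $g_2$ were not independent, then $w_1 = f_1\gamma_1 f_1\inv \sim_W w_2$ or $\sim_W w_2\inv$, contradicting our choice of $w_1, w_2$; and if $g_1 \sim_{\Aut(\mathscr{B})} g_2$, then again $w_1 \sim_W w_2$, another contradiction. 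Hence $g_1, g_2$ are independent and $g_1 \not\sim_{\Aut(\mathscr{B})} g_2$, as required.

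The main obstacle I anticipate is the construction of the automorphisms $g_i \in G$ that simultaneously (a) lie in $G$, (b) are honestly hyperbolic with an axis inside (or close to) $A$, and (c) retract onto the prescribed $w_i$ — Weyl-transitivity only controls the Weyl distance between pairs of chambers, not the global dynamics of the chosen group element, so one must argue that a suitable element (or a bounded modification / power of one) genuinely translates along a geodesic of $A$. The technical heart is exactly the bookkeeping already packaged into equations \eqref{eq:1}--\eqref{eq:3} of Lemma~\ref{lem:bdg:irreversible}: one needs the chambers $g_i^n.c$ and $\gamma_i^n.c$ to stay within bounded Weyl distance, which uses that both lie on the common geodesic $L_i$. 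Once that is set up, everything else is a formal consequence of the Coxeter-group results and Lemma~\ref{lem:bdg:irreversible}, so I would devote most of the write-up to making the transfer of $g_i$ precise and checking that $\rho_{A,c}\circ g_i|_A$ is indeed (conjugate in $W$ to) $w_i$.
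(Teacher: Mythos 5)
Your overall strategy is the same as the paper's: pull the pair $\gamma_1,\gamma_2$ out of Corollary~\ref{cor:existence}, realize them inside $G$ via Weyl-transitivity, identify the retracted maps $\rho\circ g_i|_{A}$ with $\gamma_i$, and then invoke Theorem~\ref{thm:CharRankOne} together with Lemma~\ref{lem:bdg:irreversible}. However, the step you yourself flag as ``the main obstacle'' is a genuine gap, and the fixes you sketch do not work. Weyl-transitivity gives no control over stabilizers of apartments, so you cannot ``pick $g_i\in G$ with $\delta(c,g_i.c)=w_i$ and such that $g_i$ stabilizes $A$''; worse, a single condition $\delta(c,g_i.c)=w_i$ does not even force $g_i$ to be hyperbolic (it could be elliptic, e.g.\ fix a point between $c$ and $g_i.c$), and passing to powers of a non-hyperbolic element gains nothing. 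Working ``directly with $\rho_{A,c}\circ g_i|_A$'' also does not help until you know $g_i$ has an axis meeting $A$, which is exactly what is at stake.

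The missing idea is the following two-chamber trick. First conjugate $\gamma_i$ (harmless by Remark~\ref{rem:conjugacy}) so that some $\gamma_i$-axis passes through the \emph{relative interior} of the base chamber $c$. The pairs $(c,\gamma_i.c)$ and $(\gamma_i.c,\gamma_i^2.c)$ have the same Weyl distance, so Weyl-transitivity yields $g_i\in G$ with $g_i.c=\gamma_i.c$ \emph{and} $g_i^2.c=\gamma_i^2.c$. Since automorphisms act on the geometric realization preserving the natural coordinates of each closed chamber, $g_i$ agrees with $\gamma_i$ on the point $x_i$ where the axis crosses the interior of $\gamma_i.c$, and one gets $\angle_{x_i}(g_i\inv.x_i,g_i.x_i)=\angle_{x_i}(\gamma_i\inv.x_i,\gamma_i.x_i)=\pi$. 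Hence the orbit $\{g_i^n.x_i\}_{n\in\ZZ}$ is collinear, so $g_i$ is honestly hyperbolic with an axis through $x_i$, and the retraction argument you describe then goes through verbatim. Without this (or an equivalent device) your proof does not produce hyperbolic elements of $G$ at all, so the appeals to Theorem~\ref{thm:CharRankOne} and Lemma~\ref{lem:bdg:irreversible} have nothing to apply to. As a secondary point, your justification of the ``only if'' direction via ``independence plus inequivalence forces both to be contracting'' is not something the definitions give you; the intended argument is simply that for $W$ spherical or affine the Tits boundary of $X_{\mathscr{B}}$ is empty or of diameter $\pi$, so no rank one elements exist.
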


\begin{proof}
The `only if' part is clear since, if $W$ is virtually abelian, then the Tits boundary of $X_{\mathscr{B}}$ is
either empty or of Tits diameter $\pi$. Suppose now that $W$ is not virtually abelian. Then, by
Corollary~\ref{cor:existence}, the group $W$ contains two rank one elements $\gamma_1, \gamma_2$ such that
$\gamma_1 \not \sim_W \gamma_2$ and $\gamma_1 \not \sim_W \gamma_2\inv$. Furthermore, the latter property
remains valid if we replace $\gamma_1$ and $\gamma_2$ by any nonzero power or any $W$-conjugate, see
Lemma~\ref{lem:irreversible} Remark~\ref{rem:conjugacy}. Therefore, we may and shall assume that some axis of
$\gamma_i$ $(i=1, 2)$ contains a point in the relative interior of a fixed base chamber $c$ of the \cat
realisation of $(W, \delta_W)$. Let $w_i = \delta_W(c, \gamma_i.c)$.

Fix now an apartment $A$ of $\mathscr{B}$, which identify it with $(W, \delta_W)$. In this way we view $c_0,
\gamma_i.c_0$ and $\gamma_i^2.c$ as chambers of $\mathscr{B}$, for $i=1, 2$. By hypothesis $G$ contains an
element $g_i$ such that $g_i.c=\gamma_i.c$ and $g_i^2.c = \gamma_i^2.c$. Since some $\gamma_i$-axis contains a
point in the relative interior of $c$, there exists a point $x_i$ in the relative interior of $\gamma_i.c$ such
that $\angle_{x_i}(\gamma_i\inv.x_i, \gamma_i.x_i)=\pi$. This implies that $\angle_{x_i}(g_i\inv.x_i,
g_i.x_i)=\pi$; in other words the points $\{g_i^n.x_i\}_{n \in \ZZ}$ are collinear, and hence $g_i$ is a
hyperbolic isometry, an axis of which contains $x_i$.

Let $A_i$ be an apartment containing $c$ and some axis $L_i$ of $\gamma_i$; such an apartment exists by
\cite[Th.~E]{CH06}. Let $\rho_i = \rho_{A_i, c}$ be the retraction onto $A$ centred at $c$. Then $\rho_i \circ
g_i$ is an automorphism of $A_i$ which maps $c$ to $g_i.c = \gamma_i.c$ and, hence, coincides with $\gamma_i$ if
we identify $A$ to $A_i$ by an means of isomorphism which fixes $c$. In particular, it follows from
Proposition~\ref{prop:Coxeter:charactRankOne} that the $g_i$-axis $L_i$ is not contained in any residue whose
Weyl group has the form $W_I \times W_J$ with $W_I$ and $W_J$ either both infinite or both virtually abelian. By
Theorem~\ref{thm:CharRankOne}, this implies that $g_i \in G$ is a rank one isometry of $X_{\mathscr{B}}$. Now
the fact that $g_1$ and $g_2$ are independent and $\Aut(\mathscr{B})$-inequivalent follows from
Lemma~\ref{lem:bdg:irreversible} in view of the definition of $\gamma_1$ and $\gamma_2$.
\end{proof}

We are now ready for the:
\begin{proof}[Proof of Theorem~\ref{thm:main}]
If $G$ is Weyl-transitive, then by Proposition~\ref{prop:existence:building}, the group $G$ contains two
independent rank one elements which are not $\Aut(\mathscr{B})$-equivalent.

If $\Stab_G(A)$ acts cocompactly on some apartment $A$, we may choose two elements of $\Stab_G(A)$ whose action on $A$ coincides
with some powers of the elements provided by Corollary~\ref{cor:existence}. These two elements of $\Stab_G(A)$ are rank
one for the same reason as in the proof of Proposition~\ref{prop:existence:building} above; they are independent
and $\Aut(\mathscr{B})$-inequivalent by Lemma~\ref{lem:bdg:irreversible}.

In view of Theorem~\ref{thm:CharRankOne}, we may apply Proposition~\ref{prop:infiniteHQH}, which yields the
desired conclusion.
\end{proof}

\begin{proof}[Proof of Corollary~\ref{cor:KM:CommutatorWidth}]
When $R$ is a field, the Kac--Moody group $\mathcal{G}(R)$ acts Weyl-transitively on each of its two buildings.
When $R$ is a domain, we consider the action of $\mathcal{G}(R)$ on either of the two buildings $\mathscr{B}_+$
and $\mathscr{B}_-$ associated with $\mathcal{G}(k)$, where $k$ is a field in which $R$ embeds. Since
$\mathcal{G}(R)$ already contains the Weyl group of $\mathcal{G}(k)$, it follows that $\mathcal{G}(R)$ acts
transitively on the chambers of the standard apartment of both  $\mathscr{B}_+$ and $\mathscr{B}_-$. In all
cases, the fact that $\QH(\mathcal{G}(R))$ is infinite-dimensional follows from Theorem~\ref{thm:main}.

The assertion on the stable commutator length now follows from \cite{Bavard}, while the assertion on the
commutator width follows from a straightforward verification.
\end{proof}

\begin{remark}
It follows in particular that a rank one element of the Weyl group of $\mathcal{G}(R)$ acts as a contracting
isometry on both $\mathscr{B}_+$ and $\mathscr{B}_-$.
\end{remark}

\begin{proof}[Proof of Corollary~\ref{cor:KM:simple}]
Immediate from Corollary~\ref{cor:KM:CommutatorWidth}, the simplicity result in \cite{CR06} and the fact that
Kac--Moody groups of different types over non-isomorphic finite fields are non-isomorphic \cite[Cor.~B]{CM06}.
\end{proof}

\subsection{A special case: buildings with isolated residues}

The aim of this section is to prove Proposition~\ref{prop:conj}. We first need an existence result for
hyperbolic isometries of proper Gromov hyperbolic metric spaces. It is certainly well known to the experts;
however we could not find a reference where it is explicitly stated in the literature. We therefore include  a
detailed proof.

\begin{prop}\label{prop:hyperbolic}
Let $X$ be a proper Gromov hyperbolic geodesic metric space and $G < \Isom(X)$ be any group of isometries. Then
exactly one of the following assertions holds:
\begin{itemize}
\item[(1)] $G$ contains a hyperbolic isometry.

\item[(2)] $G$ has a bounded orbit.

\item[(3)] $G$ has a unique fixed point at infinity.
\end{itemize}
\end{prop}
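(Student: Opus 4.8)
The plan is to run the standard classification of groups of isometries of a hyperbolic space, keeping careful track of properness since no cocompactness or discreteness is assumed. First I would observe that the three alternatives are mutually exclusive: if $G$ has a bounded orbit it fixes a point of $X$ (the circumcentre of the orbit closure, which exists by properness together with convexity of the metric — or, more robustly in the merely Gromov-hyperbolic setting, by the standard "quasi-centre" argument), hence fixes no point at infinity via a hyperbolic isometry; and a hyperbolic isometry has exactly two fixed points at infinity, so it cannot coexist with a global fixed point at infinity being \emph{unique} unless… — here one must argue that a group containing a hyperbolic element and fixing a single boundary point is impossible, because conjugating the hyperbolic element's attracting endpoint around by the group would force a second fixed point or destroy boundedness. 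So disjointness reduces to: no hyperbolic isometry together with case (2), and not (2) together with (3) simultaneously with (1).

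For existence of one of the three, I would fix a basepoint $x_0$ and consider the orbit $G.x_0$. If it is bounded we are in case (2). If it is unbounded, pick a sequence $g_n \in G$ with $d(x_0, g_n x_0) \to \infty$; by properness of $X$ the points $g_n x_0$ subconverge to a point $\xi \in \partial X$, so the limit set $\Lambda(G)$ is nonempty. Now split on $|\Lambda(G)|$. If $\Lambda(G)$ has at least two points, the standard North–South / ping-pong machinery on a $\delta$-hyperbolic space produces a hyperbolic element: one takes $\xi \neq \eta$ in $\Lambda$, approximates each by an orbit sequence, and shows some $g_n$ translates a long geodesic nearly along itself, hence (using the fact that an isometry moving every point of a long enough segment by at least some definite amount, while fixing its near-endpoints at bounded distance, is hyperbolic) that $g_n$ is hyperbolic for large $n$; this is exactly the classical argument reproduced, e.g., in Gromov's and Ghys–de la Harpe's treatments, and here properness is what guarantees the approximating geodesics and the limiting quasi-axis exist. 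If $\Lambda(G)$ is a single point $\xi$, then every orbit accumulates only at $\xi$; I claim $G$ fixes $\xi$. Indeed for $g \in G$, $g\Lambda(G) = \Lambda(G)$ since $g$ is an isometry (the limit set is $G$-invariant), so $g\xi = \xi$; and $\xi$ is the unique fixed point at infinity since any other fixed point would lie in $\Lambda(G)$ by a standard argument (iterate or average the orbit toward it), contradicting $|\Lambda(G)| = 1$. That lands us in case (3), and the "bounded orbit" alternative (2) is precisely the remaining possibility when $\Lambda(G) = \emptyset$.

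The step I expect to be the genuine obstacle is the production of a hyperbolic element from a two-point limit set \emph{without} any discreteness or cocompactness hypothesis: one must be careful that the approximating sequence really yields an isometry with an honest axis (or quasi-axis) rather than a parabolic-like element accumulating on one boundary point, and the usual fix is a Gromov-product / stability-of-quasigeodesics estimate showing that once the translation length along a sufficiently long geodesic exceeds a threshold depending only on $\delta$, the element is hyperbolic with quasi-axis fellow-travelling that geodesic. Properness of $X$ is used twice and essentially: to extract $\xi$ from an unbounded orbit, and to pass to a limiting (quasi-)geodesic line from the translated segments. Once these technical points are in place, assembling the trichotomy and its exclusivity is routine. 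I would present the argument as: (a) nonemptiness/finiteness dichotomy on $\Lambda(G)$; (b) the $|\Lambda(G)| \geq 2$ case via ping-pong $\Rightarrow$ (1); (c) the $|\Lambda(G)| = 1$ case $\Rightarrow$ (3); (d) $\Lambda(G) = \emptyset \Rightarrow$ (2); (e) pairwise exclusivity.
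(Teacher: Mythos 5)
Your argument is correct in outline but follows a genuinely different route from the one in the paper. You organize the proof around the limit set: $\Lambda(G)=\emptyset$ yields a bounded orbit (using properness of $X$ to extract boundary accumulation points from any unbounded orbit), $|\Lambda(G)|=1$ yields a unique fixed point at infinity, and $|\Lambda(G)|\geq 2$ forces a hyperbolic element via the classical displacement-versus-Gromov-product criterion. The paper instead assumes from the outset that $G$ contains no hyperbolic isometry and invokes Koubi's result \cite[Proposition~3.2]{Koubi}, which provides a common $100\delta$-approximate fixed point for any finite set of isometries generating a group without hyperbolic elements; exhausting a countable $G$ by finite subsets and passing to the limit in $\overline X$, these approximate fixed points either stay in a bounded region (case (2)) or accumulate on a $G$-fixed subset of $\partial X$, which must be a single point unless $G$ again has a bounded orbit, with a separate reduction for uncountable $G$. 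Your approach is the more classical one and makes the geometry of the trichotomy transparent; the paper's approach outsources the hard analysis to Koubi's quantitative statement and never has to construct a hyperbolic element by hand. Two places where your sketch is thinner than it should be: (a) in the case $|\Lambda(G)|\geq 2$, a single $g_n$ with $g_n x_0$ near $\xi$ need not translate a long geodesic along itself --- it can be ``parabolic-like'', with $g_n x_0$ and $g_n^{-1}x_0$ converging to the same boundary point --- and the standard remedy is to pass to products of elements approximating the two distinct limit points so as to keep the Gromov product $(g x_0 \mid g^{-1}x_0)_{x_0}$ bounded while making $d(x_0, g x_0)$ large; you name the right hyperbolicity criterion but do not actually exhibit an element satisfying it. (b) The uniqueness of the fixed point in case (3) requires the observation that, when orbits are unbounded, every $G$-fixed boundary point already lies in $\Lambda(G)$ (e.g.\ via the Busemann function centred at that point), which is true but deserves a line rather than a parenthesis. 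Neither issue is a flaw in the strategy; both are standard and repairable.
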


\begin{proof}
Let $\delta$ be a constant of hyperbolicity for the space $X$. We assume that $G$ does not contain any
hyperbolic isometry.

We start with the special case when $G$ is countable. We may then write $G$ as the union of an increasing chain
of finite subsets $S_1 \subset S_2 \subset \dots$. By \cite[Proposition~3.2]{Koubi}, for each $n$ the set $P_n$
consisting of those points $x \in X$ such that $d(g.x, x) \leq 100 \delta$ for all $g \in S_n$, is nonempty. If
each $P_n$ meets some fixed bounded subset of $X$, then $\bigcap_n P_n$ is nonempty since $X$ is proper and,
hence, $G$ has a bounded orbit. Otherwise, denoting by $\overline X$ the visual compactification $X \cup \bd X$,
the intersection $\bigcap_n \overline{P_n}$ is a subset of $\bd X$ which is pointwise fixed by $G$. If this
subset contains more than~$2$ points then $G$ has a bounded orbit; if it contains exactly two points then $G$
acts by translation along the geodesic lines joining them and, since $G$ has no hyperbolic element, we conclude
again that $G$ has a bounded orbit. Thus we are done in this special case.

We now turn to the general case and assume moreover that $G$ has no bounded orbit. In view of the above we may
assume that for every countable subgroup $H$ of $G$ the set $P_H$ of those points $x \in X$ such that $d(g.x, x)
\leq 100 \delta$ for all $g \in H$, is nonempty. The same arguments as before then yield the desired conclusion.
\end{proof}

\begin{proof}[Proof of Proposition~\ref{prop:conj}]
By \cite[Corollary~E]{Ca07} a building $X$ of type $(W, S)$ as in the statement possesses isolated Euclidean
residues. Thus it admits a realization as a proper Gromov hyperbolic geodesic metric space $|X|$ on which
$\Aut(X)$ acts by isometries, and such that the Euclidean residues correspond in a canonical way to the
parabolic points at infinity of $|X|$, see \cite{Bowditch}. The desired result now follows from
Proposition~\ref{prop:hyperbolic}.
\end{proof}

\begin{remark}
The results of \cite{Ca07} provide in fact a complete characterization of those buildings which are relatively
hyperbolic with respect to some family of (non-necessarily Euclidean) residues. The arguments above show that
Conjecture~\ref{conj} holds in that more general context. The remaining open case of buildings whose Weyl group
is \emph{not} relatively hyperbolic with respect to any family of finitely generated subgroups is especially
intriguing.
\end{remark}

\appendix
\section{On homogeneous quasi-morphisms of locally compact groups with integer values}

The purpose of this appendix is to prove the following.

\begin{thm}\label{thm:appendix}
Let $G$ be a locally compact group. Then any homogeneous quasi-morphism $\varphi: G \to \ZZ$ is continuous.
\end{thm}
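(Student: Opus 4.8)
The statement is that any homogeneous quasi-morphism $\varphi : G \to \ZZ$ on a locally compact group $G$ is continuous. Since $\varphi$ is homogeneous, $\varphi(g^n) = n\varphi(g)$ for all $n$, and there is a defect constant $D$ with $|\varphi(gh) - \varphi(g) - \varphi(h)| \leq D$ for all $g,h$. The first reduction I would make is to observe that it suffices to prove continuity at the identity: if $\varphi$ is continuous at $1$, then for a net $g_\alpha \to g$ we have $g_\alpha g\inv \to 1$, and $|\varphi(g_\alpha) - \varphi(g)| \leq |\varphi(g_\alpha g\inv)| + D \to$ something bounded — but this only gives local boundedness near $g$, not continuity. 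To upgrade, I would instead use that a homogeneous quasi-morphism is a class function and is invariant under inversion up to sign ($\varphi(g\inv) = -\varphi(g)$); the right statement to aim for is that $\varphi$ is \emph{locally bounded} at $1$, and then leverage the integer-valued hypothesis together with homogeneity to force $\varphi$ to vanish on a neighbourhood of $1$.

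**Key steps.** Step 1: Show $\varphi$ is bounded on some neighbourhood of the identity. By a Baire category argument — $G$ locally compact is a Baire space — write $G = \bigcup_{n\geq 1} \varphi\inv(\{k : |k| \leq n\})$; one of these sets is non-meagre, hence (after a standard argument using the quasi-morphism inequality and the fact that a non-meagre set $A$ satisfies $AA\inv$ containing a neighbourhood of $1$, which requires knowing these level sets are, say, analytic or have the Baire property — here I would instead run the Baire argument on the \emph{closures} $\overline{\varphi\inv(\{|k|\leq n\})}$, one of which has nonempty interior) one gets that $\varphi$ is bounded, by some constant $M$, on a symmetric neighbourhood $U$ of $1$. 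Step 2: Bootstrap using homogeneity. For $g \in U$ with $g^n \in U$ for all $n \leq N$, the quasi-morphism inequality iterated gives $|\varphi(g^n) - n\varphi(g)| \leq (n-1)D$, i.e. $|n\varphi(g)| = |\varphi(g^n)| \leq M + (n-1)D$ whenever $g^n \in U$; but the real leverage is: pick any compact symmetric neighbourhood $V$ of $1$ with $V^N \subseteq U$ (possible by continuity of multiplication and compactness), so that for $g \in V$ we have $g^n \in U$ for $n = 1, \ldots, N$, hence $N|\varphi(g)| \leq M + (N-1)D$, so $|\varphi(g)| \leq (M + (N-1)D)/N$ for all $g \in V$. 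Step 3: Since $N$ is arbitrary (choosing finer and finer $V$), letting $N \to \infty$ shows $|\varphi(g)| \leq D$ on arbitrarily small neighbourhoods of $1$; but then, since $\varphi$ is \emph{integer-valued}, we can re-run Step 2 with bound $D$ in place of $M$ to get $|\varphi(g)| \leq D/N \cdot$-type estimates forcing $|\varphi(g)| < 1$ on a neighbourhood of $1$, hence $\varphi \equiv 0$ there. Step 4: Continuity everywhere: for $g_\alpha \to g$, $g_\alpha g\inv \to 1$ eventually lies in the neighbourhood where $\varphi = 0$, so $|\varphi(g_\alpha) - \varphi(g)| \leq |\varphi(g_\alpha g\inv)| + D = D$; but in fact on that same small neighbourhood $W$ with $\varphi|_W = 0$ we can take $W$ small enough that $W W \subseteq$ (the $\varphi=0$ neighbourhood), forcing the defect term to vanish too, giving $\varphi(g_\alpha) = \varphi(g)$ eventually.

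**Main obstacle.** The delicate point is Step 1 — getting from a Baire-category statement to genuine local boundedness. The sets $\varphi\inv(\{|k| \leq n\})$ need not be closed or even measurable for an arbitrary quasi-morphism, so the naive Baire argument does not immediately apply. I expect the cleanest fix is to note $\varphi$, being a homogeneous quasi-morphism, satisfies $\varphi(g) = \lim_n \frac{1}{n}\psi(g^n)$ for \emph{some} (possibly discontinuous) genuine "approximate homomorphism" $\psi$, but that doesn't help with measurability either. The honest route: apply Baire to the \emph{closed} sets $F_n = \overline{\{g : |\varphi(g)| \leq n\}}$, whose union is $G$; some $F_N$ has interior, so there is an open $\Omega$ and a point $g_0$ with $\Omega \subseteq F_N$, meaning every point of $\Omega$ is a limit of points where $|\varphi| \leq N$. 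Then for $g \in \Omega\inv\Omega$ (a neighbourhood of $1$), write $g = \lim(a_\alpha\inv b_\alpha)$ with $|\varphi(a_\alpha)|, |\varphi(b_\alpha)| \leq N$; the inequality gives $|\varphi(a_\alpha\inv b_\alpha)| \leq 2N + D$, but passing to the limit requires continuity of $\varphi$ — circular. So the real crux is that this classical-looking lemma genuinely needs care; I would ultimately prove Step 1 by the following device: the function $g \mapsto \varphi(g) \bmod 1$ is identically zero (integer values!), so the subtlety about measurability of level sets is sidestepped by working with the bound $|\varphi(g^n) - n\varphi(g)| \leq (n-1)D$ directly on a \emph{single} small compact neighbourhood and using that a continuous-multiplication argument confines all powers $g, \ldots, g^N$ to a fixed compact set $K$ on which $\varphi$ — \emph{a priori} — we still need boundedness. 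The resolution is that homogeneity plus integer values makes $\varphi$ bounded on any set all of whose elements have a bounded-length "root structure", and the cocompact/local-compactness lets one arrange this; I anticipate the referee-proof version packages Steps 1–2 together via this observation, and that is the step I would spend the most effort on.
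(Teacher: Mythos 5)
Your proposal is a Steinhaus--Pettis/Baire-category automatic-continuity argument, and it contains two independent gaps; the paper's proof is of a completely different nature. The first gap is the one you flag yourself and never repair: the level sets $\{g : |\varphi(g)|\le n\}$ of an abstract quasi-morphism need not have the Baire property, and replacing them by their closures loses all control of $\varphi$ on the resulting open set (a closed set can have nonempty interior while $\varphi$ is unbounded on a dense subset of it). Your proposed ``resolution'' via a ``bounded-length root structure'' is not an argument. Moreover the difficulty is essential rather than technical: a discontinuous $\mathbf{Q}$-linear map $\RR\to\RR$ is a discontinuous homogeneous quasi-morphism, so no category argument that, like your Step~1, makes no use of the target $\ZZ$ can possibly establish local boundedness.

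The second gap is that even granting local boundedness, your bootstrap stalls. From $|\varphi(g^n)-n\varphi(g)|\le (n-1)D$ and $|\varphi(g^n)|\le M$ for $n\le N$ one gets $|\varphi(g)|\le M/N+D$, which tends to $D$, not to $0$; re-running with $M$ replaced by $D$ gives $|\varphi(g)|\le D/N+D$, still stuck at $D$. Since the defect $D$ may be $\ge 1$, integrality cannot then be invoked to force $\varphi$ to vanish near the identity. (Boundedness forces vanishing only on a set closed under taking all powers, e.g.\ an open \emph{subgroup}; a mere neighbourhood of $1$ does not suffice, and a connected group such as $\RR$ has no small open subgroups.) The paper's proof instead runs through structure theory: on the identity component it reduces via Montgomery--Zippin to Lie groups and kills the soluble radical by amenability plus divisibility and the semisimple part by the Bruhat decomposition; on the totally disconnected quotient it uses van Dantzig's theorem to produce a compact open subgroup and shows that a homogeneous quasi-morphism of a profinite group into $\ZZ$ vanishes, using that pro-cyclic subgroups are amenable (so $\varphi$ restricts to a homomorphism) together with residual finiteness of $\ZZ$ and closedness of finite-index subgroups. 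The hypothesis of integer values enters exactly there and in the divisibility of compact connected groups --- leverage your argument never exploits.
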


It was observed by Roger Alperin that the solution to Hilbert fifth problem implies that any homomorphism of a
locally compact group to $\ZZ$ is continuous (this follows from \cite[Corollary~3]{Alperin}). The above
statement shows that this holds more generally for homogeneous quasi-morphisms. A remarkable result of a similar
nature has been established in \cite[Lemma~7.4]{BIW}, asserting that for any locally compact group $G$, a
homogeneous Borel quasi-morphism $G \to \RR$ is continuous. Notice that non-homogeneous quasi-morphisms are
generally discontinuous.

\medskip%
We start with a basic consequence of homogeneity.

\begin{lem}\label{lem:homogeneous}
Let $\varphi : G \to \RR$ be a homogeneous quasi-morphism of a group $G$, which vanishes on a normal subgroup
$N$. Then $\varphi$ descends to a homogeneous quasi-morphism of the quotient $G/N$.
\end{lem}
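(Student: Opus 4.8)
The plan is to show directly that $\fhi$ is constant on cosets of $N$, and then verify that the induced map on $G/N$ is a homogeneous quasi-morphism. First I would record the key quantitative input: since $\fhi$ is a quasi-morphism, there is a constant $D = \sup_{g,h}|\fhi(gh)-\fhi(g)-\fhi(h)| < \infty$, the so-called defect. The main step is to prove that $\fhi(gn) = \fhi(g)$ for all $g \in G$ and $n \in N$. The idea is to iterate: using the defect twice one gets, for every $k \geq 1$, an estimate of the form $|\fhi((gn)^k) - \fhi(g^k) - (\text{something bounded involving conjugates of } n)|$. More precisely, one expands $(gn)^k$ and uses normality of $N$ to rewrite it as $g^k \cdot n_k$ with $n_k \in N$; applying the defect inequality $k$ times (and $\fhi|_N = 0$) yields $|\fhi((gn)^k) - \fhi(g^k)| \leq C k$ for a constant $C$ depending only on $D$. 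Dividing by $k$ and using homogeneity, which gives $\fhi((gn)^k) = k\fhi(gn)$ and $\fhi(g^k) = k \fhi(g)$, we obtain $|\fhi(gn) - \fhi(g)| \leq C$; but this holds with $gn$ replaced by $g n^m$ for every $m$, and since homogeneity forces $\fhi(gn^m) - \fhi(g)$ to behave linearly-up-to-bounded-error in $m$ while staying bounded by $C$, it must in fact vanish. (Alternatively, and more cleanly: replace $n$ by $n^m$, note $(gn^m)$ and $g$ differ by an element of $N$, run the same estimate, and let $m \to \infty$ after extracting the linear growth; the bounded term $Ck$ is swamped.)

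With $\fhi$ shown to be $N$-invariant on the right — and by the symmetric argument, or by combining with homogeneity of $\fhi$ on $\la g \ra$ and the fact that $\fhi$ is a class function up to bounded error, also on the left — one defines $\bar\fhi : G/N \to \RR$ by $\bar\fhi(gN) := \fhi(g)$. This is well-defined by the invariance just established. It is a quasi-morphism with the same defect $D$: for any $\bar g, \bar h \in G/N$, pick lifts $g, h$, and then $|\bar\fhi(\bar g\bar h) - \bar\fhi(\bar g) - \bar\fhi(\bar h)| = |\fhi(gh) - \fhi(g) - \fhi(h)| \leq D$. Finally, homogeneity descends: for $\bar g \in G/N$ with lift $g$, a lift of $\bar g^k$ is $g^k$, so $\bar\fhi(\bar g^k) = \fhi(g^k) = k\fhi(g) = k\bar\fhi(\bar g)$, and thus $\bar\fhi$ restricts to a homomorphism on every cyclic subgroup of $G/N$.

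The one genuinely delicate point — and the step I expect to be the main obstacle — is the claim that $\fhi$ vanishes on $N$ \emph{implies} $\fhi$ is $N$-invariant, i.e. upgrading "$\fhi|_N = 0$" to "$\fhi(gn) = \fhi(g)$". The naive defect estimate only gives $|\fhi(gn) - \fhi(g) - \fhi(n)| \leq D$, hence $|\fhi(gn) - \fhi(g)| \leq D$, which is a bound, not an equality. Getting the equality requires exploiting homogeneity together with the boundedness: replacing $n$ by arbitrary powers $n^m$ and using that $m \mapsto \fhi(gn^m)$ cannot be bounded unless the "slope" is zero, while $\fhi(g n^m)$ differs from $\fhi(g)$ by at most $D$ uniformly in $m$. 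One should also invoke the standard fact that a homogeneous quasi-morphism is a conjugation-invariant, which combined with $\fhi|_N=0$ handles the interplay between $(gn)^k$ and $g^k$ cleanly. Once this invariance is secured the rest is routine bookkeeping.
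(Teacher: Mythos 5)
Your overall skeleton is right (prove $\varphi(gn)=\varphi(g)$, then note that well-definedness, the defect bound and homogeneity all pass trivially to $G/N$), and you correctly identify the key identity $(gn)^k = g^k\cdot n_k$ with $n_k\in N$ coming from normality. But the central estimate is carried out incorrectly, and the attempted repair does not work. You apply the defect inequality $k$ times and obtain $|\varphi((gn)^k)-\varphi(g^k)|\le Ck$, which after dividing by $k$ only gives the bound $|\varphi(gn)-\varphi(g)|\le C$ that you already had from a single application of the defect. This is unnecessary: since $n_k$ lies in $N$ (not merely a product of conjugates of $n$ on which you'd have to estimate), the hypothesis $\varphi|_N=0$ gives $\varphi(n_k)=0$ outright, so \emph{one} application of the defect inequality to the two-factor product $g^k\cdot n_k$ yields $|\varphi((gn)^k)-\varphi(g^k)|\le D$ with $D$ independent of $k$. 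Homogeneity then gives $k\,|\varphi(gn)-\varphi(g)|\le D$, and letting $k\to\infty$ forces equality. (The paper runs this as two inequalities, $\varphi(gn)\le\varphi(g)$ and then the same applied to $(gn)\cdot n^{-1}$, but the content is identical.)

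The patch you propose to upgrade $|\varphi(gn)-\varphi(g)|\le C$ to an equality is fallacious. You claim that "homogeneity forces $\varphi(gn^m)-\varphi(g)$ to behave linearly in $m$," but homogeneity only controls $\varphi$ on powers of a \emph{single} element, and $gn^m$ is not the $m$-th power of anything; the only linear-growth quantity available is $\varphi(n^m)=m\varphi(n)$, whose slope $\varphi(n)$ is already zero by hypothesis. So the boundedness of $m\mapsto\varphi(gn^m)$ gives no new information, and nothing "swamps" the term $Ck$. Without the single-application-of-the-defect observation, the proof does not close.
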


\begin{proof}
Given $g \in G$ and $n \in N$, we claim that $\varphi(g) = \varphi(g\cdot n)$. Indeed, for each integer $k>0 $,
there exists $n_k \in N$ such that $(g \cdot n)^k = g^k \cdot n_k$. Therefore, we have
$$\begin{array}{rcl}
\varphi(g\cdot n) &=& \lim_{k \to \infty}\frac{ \varphi\big( (gn)^k \big)}{k}\\
& = & \lim_{k \to \infty} \frac{\varphi\big( g^k \cdot n_k \big)}{k}\\
& \leq & \lim_{k \to \infty} \big( \frac{\varphi(g^k) + D}{k}\big)\\
& = & \varphi(g),
\end{array}$$
where $D$ is a constant depending only on $\varphi$. In particular, we have also $\varphi(g\cdot n\inv) \leq
\varphi(g)$, and hence $\varphi(gn) = \varphi(g)$. The desired conclusion follows.
\end{proof}

The next step is to consider totally disconnected groups, the key point being the compact case.

\begin{lem}\label{lem:profinite}
Let $G$ be a profinite group. Then any homogeneous quasi-morphism $\varphi: G \to \ZZ$ is constant.
\end{lem}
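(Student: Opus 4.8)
The plan is to exploit the fact that a profinite group $G$ is the inverse limit of its finite quotients, so that every element lies in the kernel of a continuous finite quotient map, and to show that a homogeneous quasi-morphism must vanish on elements of finite order and — more subtly — on the whole group. First I would recall the elementary fact that any homogeneous quasi-morphism $\varphi$ is constant on conjugacy classes and vanishes on torsion elements: if $g^k = 1$ then $\varphi(g) = \frac{1}{k}\varphi(g^k) = 0$. Since $G$ is profinite, every element $g \in G$ is a limit of torsion elements in the sense that for every open normal subgroup $N \lhd G$, the quotient $G/N$ is finite, so the image of $g$ in $G/N$ has finite order. However, $g$ itself need not be torsion in $G$, so this alone does not finish the argument; the real input must come from the normal-subgroup structure.

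The key step is to use Lemma~\ref{lem:homogeneous}: I would argue that $\varphi$ vanishes on a suitable dense normal subgroup and then invoke continuity. Concretely, for each open normal subgroup $N \lhd G$ of finite index, consider the restriction $\varphi|_N$. The obstacle is that a priori $\varphi|_N$ need not be trivial. Instead, the cleaner route is: since $G/N$ is finite, hence has vanishing $\QH$ (being amenable, even finite), the homogeneous quasi-morphism does not factor helpfully through $G/N$ on its own — so rather than quotienting, I would show directly that $\varphi$ takes bounded values, and a bounded homogeneous quasi-morphism on a group is necessarily the zero map (because $\varphi(g^n) = n\varphi(g)$ forces $\varphi(g)=0$ if $\varphi$ is bounded). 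To prove boundedness, note that the defect $D = \sup_{g,h}|\varphi(gh) - \varphi(g) - \varphi(h)|$ is finite, and a standard estimate gives $|\varphi(g)| \le$ (something controlled by $D$ and the ``commutator length'' or torsion order of $g$). Since every $g \in G$ has finite order modulo every open $N$, and $G = \varprojlim G/N$, one squeezes $|\varphi(g)|$ via the finite quotients: in $G/N$ the image of $g$ is torsion so contributes nothing, and the correction term from $N$ is where the argument must be pushed.

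The cleanest formulation I would actually adopt: since $\varphi$ takes \emph{integer} values and is homogeneous, it suffices to show $\varphi$ is bounded, for then $\varphi \equiv 0$. Given $g \in G$, for each $n$ choose an open normal subgroup $N_n$ with $g^{n!} \in N_n$ (possible since $\bigcap N = 1$ and $G/N$ finite of order dividing, eventually, anything); more precisely, the closure of $\langle g \rangle$ is a procyclic subgroup, which is a quotient of $\widehat{\ZZ}$, and restricting $\varphi$ to it, Lemma~\ref{lem:homogeneous} combined with the fact that $\widehat{\ZZ}$ has no nontrivial integer-valued homogeneous quasi-morphism (it is abelian, so homogeneous quasi-morphisms are homomorphisms $\widehat{\ZZ} \to \ZZ$, and any such is continuous by Alperin's result hence trivial, since $\widehat{\ZZ}$ is compact and $\ZZ$ discrete) shows $\varphi(g) = 0$. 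Thus $\varphi$ vanishes on every element, i.e. $\varphi$ is constant equal to $\varphi(1) = 0$.

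The main obstacle I anticipate is the passage from ``torsion modulo every open subgroup'' to an actual conclusion about $\varphi(g)$: one cannot simply say $g$ is torsion. The resolution is to restrict to the procyclic (hence abelian, hence compact) subgroup $\overline{\langle g\rangle}$, where homogeneous quasi-morphisms are genuine homomorphisms, and then apply the already-cited fact (Alperin, via Hilbert's fifth problem) that a homomorphism from a compact group to $\ZZ$ is trivial — or, even more elementarily, that a homomorphism from a divisible-ish / compact group to $\ZZ$ must be zero because $\ZZ$ has no nontrivial compact subgroups and the image, being a continuous-or-abstract homomorphic image, is too small. I would double-check whether one needs continuity of $\varphi|_{\overline{\langle g\rangle}}$ a priori or whether the abstract homomorphism $\widehat{\ZZ} \to \ZZ$ being forced to vanish is purely algebraic; in fact $\widehat{\ZZ}$ is divisible by no prime in the naive sense, but $\widehat{\ZZ}/n\widehat{\ZZ} \cong \ZZ/n\ZZ$ and any homomorphism to $\ZZ$ kills all these finite quotients, and $\bigcap n\widehat{\ZZ} = 0$, so the homomorphism is injective on... — this needs the clean observation that $\Hom(\widehat{\ZZ}, \ZZ) = 0$ because any $f$ there satisfies $f(1)$ divisible by every $n$ (as $1 \in n\widehat\ZZ$ up to a unit is false — rather $f(\widehat\ZZ) \subseteq \ZZ$ and $\widehat\ZZ$ is not finitely generated, with $f$ factoring through $\widehat\ZZ/(\ker f)$), so I would instead cite directly that $\Hom_{\mathrm{abstract}}(\widehat{\ZZ},\ZZ)=0$, a standard fact. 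That is the one point deserving care.
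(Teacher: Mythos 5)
Your proposal, once the preliminary false starts about torsion and boundedness are discarded, lands on essentially the paper's own proof: restrict $\varphi$ to the procyclic closure $H=\overline{\langle g\rangle}$, use that $H$ is abelian (hence amenable) to see that $\varphi|_H$ is a genuine homomorphism, and then show that an abstract homomorphism from a procyclic group to $\ZZ$ must vanish. The only divergence is in that last step: you cite $\Hom_{\mathrm{abstract}}(\widehat{\ZZ},\ZZ)=0$, which is indeed true and sufficient, although your sketched justification trails off --- a clean argument is that for each prime $q$ the kernel of the projection $\widehat{\ZZ}\to\ZZ_q$ is $q$-divisible, so any homomorphism to $\ZZ$ factors through $\ZZ_q$, whose image in $\ZZ$ is divisible by every prime $\neq q$ and hence trivial. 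The paper instead stays self-contained: since $\ZZ$ is residually finite, $\ker(\varphi|_H)$ is an intersection of finite-index subgroups of $H$, which are closed (Serre), and compactness of $H$ then forces $\varphi(H)=0$; this avoids the appeal to Alperin's theorem (i.e.\ to Hilbert's fifth problem), which would be circular in spirit given that the whole appendix is meant to generalize exactly that continuity statement.
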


\begin{proof}
Assume for a contradiction that $\varphi$ is not constant and let $g \in G$ be such that $\varphi(g) \neq 0$.
Let $H$ be the closure of $\la g \ra$ in $G$. Thus $H$ is a pro-cyclic group. In particular it is Abelian and,
hence, amenable as an abstract group. It follows that the restriction of $\varphi$ to $H$ is a homomorphism.
Since $\ZZ$ is residually finite, the kernel of the restriction of $\varphi$ to $H$ is an intersection of finite
index subgroups of $H$. By \cite[\S4.2]{Serre_Galois}, any finite index subgroup of $H$ is closed. This shows
that the restriction of $\varphi$ to $H$ is continuous. Since $H$ is compact, we deduce $\varphi(H)=0$, a
contradiction.
\end{proof}

Recall from \cite[p.~55]{Kaplansky} (see also \cite[Satz~4]{HartmanRyll}) that a compact Abelian group is
connected if and only if it is divisible (in fact, the latter holds for non-Abelian groups as well, see
\cite[Corollary~2]{Mycielski}). From this and the preceding two lemmas, we deduce the following.

\begin{lem}\label{lem:compact}
Let $G$ be a compact group. Then any homogeneous quasi-morphism $\varphi: G \to \ZZ$ is constant.
\end{lem}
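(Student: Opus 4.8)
The plan is to reduce to the two special cases already in hand: the totally disconnected case, which is Lemma~\ref{lem:profinite}, and the connected case, which will be handled by a divisibility argument. Write $G_0$ for the identity component of $G$. It is a closed normal subgroup, hence compact and connected, and by the fact quoted just above (for the non-Abelian case, \cite[Corollary~2]{Mycielski}) every compact connected group is divisible; in particular $G_0$ is divisible.

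The first step is to show that $\varphi$ vanishes identically on $G_0$. Fix $g \in G_0$ and an integer $n \geq 1$. By divisibility of $G_0$ there exists $h \in G_0$ with $h^n = g$, so homogeneity gives $\varphi(g) = \varphi(h^n) = n\,\varphi(h)$, whence $n$ divides $\varphi(g)$ in $\ZZ$. Since $n$ was arbitrary and $\varphi(g) \in \ZZ$, this forces $\varphi(g) = 0$. Thus $\varphi|_{G_0} \equiv 0$.

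The second step is to pass to the quotient. Since $\varphi$ vanishes on the normal subgroup $G_0$, Lemma~\ref{lem:homogeneous} shows that it descends to a homogeneous quasi-morphism $\bar\varphi : G/G_0 \to \ZZ$. The group $G/G_0$ is compact and totally disconnected, hence profinite, so Lemma~\ref{lem:profinite} applies and $\bar\varphi$ is constant. Pulling back, $\varphi$ is constant, which is the assertion.

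I do not expect a genuine obstacle here: the only non-elementary ingredient is the divisibility of a compact connected group, which has already been cited, and everything else is a routine assembly of Lemmas~\ref{lem:homogeneous} and~\ref{lem:profinite} with the elementary observation that a $\ZZ$-valued homogeneous quasi-morphism must annihilate any divisible group. The one point to keep an eye on is simply that the descended map $\bar\varphi$ is still $\ZZ$-valued, so that Lemma~\ref{lem:profinite} does apply verbatim.
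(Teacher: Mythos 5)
Your proof is correct and follows essentially the same route as the paper: kill $\varphi$ on the identity component via divisibility, then descend to the profinite quotient and apply Lemma~\ref{lem:profinite}. The only cosmetic difference is that the paper first reduces to abelian subgroups (closures of cyclic groups) and uses divisibility of compact connected \emph{abelian} groups, whereas you invoke divisibility of arbitrary compact connected groups directly -- a fact the paper itself records via the Mycielski reference, so this is fine.
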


\begin{proof}
Let $G^\circ$ denote the neutral component of $G$. We claim that the restriction of $\varphi$ to $G^\circ$
vanishes. As in the proof of Lemma~\ref{lem:profinite}, it is enough to prove this fact in the case $G^\circ$ is
Abelian. As recalled above, a compact connected Abelian group is divisible. Thus $\varphi(G^\circ)=0$ since a
divisible group admits no nonzero homogeneous quasi-morphism. It follows from Lemma~\ref{lem:homogeneous} that
$\varphi$ descends to a quasi-morphism of the group of components $G/G^\circ$, and the desired conclusion now
follows from Lemma~\ref{lem:profinite}.
\end{proof}

The last and most important step is the following. It relies on the structure theory of locally compact connected
groups.

\begin{lem}\label{lem:one-param}
Let $G$ be a connected locally compact group. Then any homogeneous quasi-morphism $\varphi: G \to \ZZ$ is
constant.
\end{lem}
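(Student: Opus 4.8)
The plan is to reduce to the compact case via the structure theory of connected locally compact groups. By the solution to Hilbert's fifth problem (the Gleason--Yamabe theorem), a connected locally compact group $G$ is an inverse limit of Lie groups; more precisely, every identity neighbourhood contains a compact normal subgroup $K$ such that $G/K$ is a connected Lie group. So the first step is to treat connected Lie groups, and then glue.

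First I would handle the Lie case. Let $G$ be a connected Lie group. It suffices to show $\varphi$ vanishes on every one-parameter subgroup $\{\exp(tv) : t \in \RR\}$, since these generate $G$ and $\varphi$ is homogeneous (hence, being a homomorphism on each cyclic subgroup, the vanishing will propagate using the quasi-morphism inequality along products of one-parameter subgroups — actually more care is needed here, see below). For a fixed $v$, the map $t \mapsto \varphi(\exp(tv))$ is a homogeneous quasi-morphism on $(\RR,+)$ with values in $\ZZ$; since $\RR$ is abelian (hence amenable) it is a genuine homomorphism $\RR \to \ZZ$, and the only such homomorphism is $0$. Thus $\varphi$ kills every one-parameter subgroup. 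Now the subtle point: vanishing on a generating set does not immediately give vanishing on $G$ for a mere quasi-morphism. But a connected Lie group is \emph{uniformly perfect} modulo its abelianization in a strong sense — better, one uses that $G$ is boundedly generated by one-parameter subgroups in a neighbourhood of the identity, and that a homogeneous quasi-morphism vanishing on a subset $T$ with $\varphi|_T = 0$ vanishes on the subgroup \emph{boundedly} generated by $T$ provided one controls commutators; alternatively, and more cleanly, observe $G = [G^\circ, G^\circ]\cdot Z$ type decompositions reduce us to the case $G$ is abelian or $G$ is perfect. If $G$ is abelian and connected it is divisible, so admits no nonzero homogeneous quasi-morphism; if $G$ is (topologically) perfect, a homogeneous quasi-morphism into $\ZZ$ restricted to... here I would instead invoke: a homogeneous quasi-morphism is a class function and is Lipschitz with respect to the commutator length, so it vanishes on a group that is \emph{uniformly} perfect, and connected Lie groups with compact... — this is exactly the delicate part (see the obstacle paragraph).

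Given the Lie case, the general connected case follows: pick a compact normal $K \lhd G$ with $G/K$ a connected Lie group. By Lemma~\ref{lem:compact} the restriction $\varphi|_K$ is constant, hence $\varphi|_K \equiv \varphi(1) = 0$ (a homogeneous quasi-morphism vanishes at the identity, as $\varphi(1) = \varphi(1^2)/\cdot$ forces $\varphi(1)=0$). By Lemma~\ref{lem:homogeneous}, $\varphi$ descends to a homogeneous quasi-morphism $\bar\varphi : G/K \to \ZZ$, which is constant (hence $0$) by the Lie case. Therefore $\varphi$ is constant.

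The main obstacle, as flagged, is passing from ``vanishes on all one-parameter subgroups'' to ``vanishes on all of $G$'' for a connected Lie group. The clean route is: every element of a connected Lie group is a product of \emph{boundedly many} exponentials (bounded generation by one-parameter subgroups, with a bound depending only on $G$), so for $g \in G$ we get $|\varphi(g)| \le N D$ where $D$ is the defect of $\varphi$ and $N$ the bound; applying this to $g^n$ — which is again a product of $N$ exponentials since $(\exp v)^n = \exp(nv)$ and more generally powers of a bounded product reduce, via commutators, to a bounded product up to bounded error — gives $|\varphi(g)| = |\varphi(g^n)|/n \le (ND + C)/n \to 0$. Making the ``bounded product of exponentials'' claim precise, uniformly in $n$, is where one must be careful; it can be extracted from the fact that a connected locally compact group is generated by any identity neighbourhood together with compactness arguments, or from Gotô's commutator theorem for connected Lie groups. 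I expect this bounded-generation bookkeeping to be the only real content; everything else is assembly of the already-proven lemmas.
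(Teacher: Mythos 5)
Your outer reduction (Montgomery--Zippin to pass to a Lie quotient, Lemma~\ref{lem:compact} plus Lemma~\ref{lem:homogeneous} to mod out the compact kernel, and the observation that a homogeneous quasi-morphism of $(\RR,+)$ with values in $\ZZ$ is a genuine homomorphism and hence zero on every one-parameter subgroup) matches the paper. The gap is exactly where you flag it: you need that a connected Lie group $G$ is a product of a \emph{uniformly bounded} number of one-parameter subgroups (or of subsets on which $\varphi$ vanishes), and neither of your suggested justifications delivers this. ``Generated by any identity neighbourhood plus compactness'' gives $G=\bigcup_n U^n$ but no uniform $n$ when $G$ is non-compact (already for $G=\RR$ with $U$ bounded one needs arbitrarily many factors from $U$); and Got\^o's commutator theorem concerns commutators in compact or semisimple groups and does not by itself bound the number of exponentials needed in, say, a solvable group or one with infinite centre. (By contrast, the worry you raise about decomposing $g^n$ is a non-issue: once every element of $G$ is a product of at most $N$ exponentials, this applies to $g^n$ directly, giving $|\varphi(g^n)|\le (N-1)D$ uniformly in $n$, whence $\varphi(g)=0$; no commutator bookkeeping is required.)

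The missing fact is true --- canonical coordinates of the second kind write a connected solvable group as a product of $\dim G$ one-parameter subgroups, and the Iwasawa decomposition handles the semisimple Levi factor --- but proving it requires essentially the same structure theory the paper deploys more directly. The paper's route: the restriction of $\varphi$ to the amenable soluble radical $R$ is a genuine homomorphism, so it vanishes because $R$ is generated by one-parameter subgroups (for a true homomorphism, vanishing on a generating set suffices, so no bounded generation is needed there); after passing to the semisimple quotient and discarding compact factors via Lemma~\ref{lem:compact}, $\varphi$ vanishes on a minimal parabolic $P$ (soluble-by-compact, hence amenable), and the Bruhat decomposition exhibits every element of $G$ as a bounded product of elements of finitely many conjugates of $P$, so $\varphi$ is bounded and therefore constant by homogeneity. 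If you want to keep the one-parameter-subgroup formulation, you must supply the bounded product decomposition explicitly; as written, the proof is incomplete at precisely the step you yourself identify as the only real content.
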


\begin{proof}
By \cite[Theorem~4.3]{Montgomery-Zippin}, the group $G$ possesses a compact normal subgroup $K$ such that $G/K$
is a Lie group. In view of Lemmas~\ref{lem:homogeneous} and~\ref{lem:compact}, there is no loss of generality in
assuming $K=1$. We suppose henceforth that $G$ is a Lie group. Let $R$ denote its soluble radical. Thus $R$ is a
connected Lie group which is amenable as an abstract group. In particular the restriction of $\varphi$ to $R$ is
a homomorphism, and we deduce $\varphi(R)=0$ since $R$ is generated by one-parameter subgroups. By
Lemma~\ref{lem:homogeneous}, we may thus further assume that $G$ is semi-simple. Appealing again to
Lemma~\ref{lem:compact}, it is enough to deal with the case when $G$ is simple and non-compact. Let $P$ be a
minimal parabolic subgroup of $G$. Then $P$ is soluble-by-compact and, hence, as before the restriction of
$\varphi$ to $P$ vanishes. The Bruhat decomposition now implies that every element of $G$ is a bounded product
of elements of a finite number of conjugates of $P$. Thus $\varphi(G)$ is bounded, whence constant by
homogeneity.
\end{proof}

\begin{proof}[Proof of Theorem~\ref{thm:appendix}]
Let $G^\circ$ denote the neutral component of $G$. By Lemma~\ref{lem:one-param}, the restriction of $\varphi$ to
$G^\circ$ vanishes. Thus Lemma~\ref{lem:homogeneous} ensures that $\varphi$ descends to a homogeneous
quasi-morphism of the group of components to $\ZZ$. In other words, it suffices to prove the theorem in the case
when $G$ is totally disconnected. It is enough to show that $\varphi\inv(0)$ is open. By~\cite[III \S4
No~6]{BourbakiTGI}, the group $G$ possesses some compact open subgroup $Q$. Thus $Q$ is a profinite group and
Lemma~\ref{lem:profinite} shows that $\varphi(Q)= 0$. Thus $\varphi\inv(0)$ is indeed open and we are done.
\end{proof}

{\small
\providecommand{\bysame}{\leavevmode\hbox to3em{\hrulefill}\thinspace}
\providecommand{\MR}{\relax\ifhmode\unskip\space\fi MR }
\providecommand{\MRhref}[2]{%
  \href{http://www.ams.org/mathscinet-getitem?mr=#1}{#2}
}
\providecommand{\href}[2]{#2}

}
\end{document}